
\documentclass[11pt, a4paper]{amsart}
\usepackage[hmargin=2.7cm,vmargin=2.7cm]{geometry}

\usepackage[mathscr]{eucal}
\usepackage{amsfonts}
\usepackage{amsmath}
\usepackage{amsthm}
\usepackage{empheq,mathtools}
\usepackage[colorlinks]{hyperref}
\hypersetup{
 colorlinks=true,
 citecolor=magenta,
 linkcolor=blue,
 urlcolor=cyan}
\usepackage{enumitem}
\usepackage{tikz}
\usetikzlibrary{matrix,arrows,decorations.pathmorphing}
\usetikzlibrary{positioning}
\usepackage{xcolor}
\usepackage{bm}
\usepackage[cal=boondox]{mathalfa}

\newtheorem{theorem}{Theorem}
\newtheorem{lemma}[theorem]{Lemma}
\newtheorem{corollary}[theorem]{Corollary}
\newtheorem{proposition}[theorem]{Proposition}
\newtheorem{definition}[theorem]{Definition}

\newtheorem{remark}[theorem]{Remark}


\makeatletter
\let\reftagform@=\tagform@
\def\tagform@#1{\maketag@@@{(\ignorespaces\textcolor{cyan}{#1}\unskip\@@italiccorr)}}
\renewcommand{\eqref}[1]{\textup{\reftagform@{\ref{#1}}}}
\makeatother
\usepackage{hyperref}
\hypersetup{colorlinks=true}

\newcommand{\co}{co}
\newcommand{\cov}{\mbox{\rm cov}\hspace{0.02in}}
\newcommand{\lip}{\mbox{\rm lip}\hspace{0.02in}}

\newcommand{\reg}{\mbox{\rm reg}\hspace{0.02in}}


\begin{document}


\title[Metric regularity, pseudo-Jacobians and inversion on Finsler manifolds]{Metric regularity, pseudo-Jacobians and global inversion theorems on Finsler manifolds}
\author{Olivia Gut\'u$^1$, Jes\'us A. Jaramillo$^2$ and \'Oscar Madiedo$^3$}

\address{$^1$ Departamento de Matem{\'a}ticas\\  Universidad de Sonora, 83000\\ Hermosillo, Sonora, Mexico}

\email{oliviagutu@mat.uson.mx}

\address{$^2$ Instituto de Matem\'atica Interdisciplinar (IMI) and Departamento de An{\'a}lisis Matem{\'a}tico\\  Universidad Complutense de Madrid\\ 28040\\ Madrid, Spain}

\email{jaramil@mat.ucm.es}

\address{$^3$ Departamento de Econom\'ia Financiera y Actuarial y Estad\'istica\\ Universidad Complutense de Madrid\\ Campus de Somosaguas,  28223 \\Pozuelo de Alarc\'on, Madrid, Spain}

\email{omadievo@ucm.es}

\thanks{Research supported in part by MICINN (Spain), grant PGC2018-097286-B-I00}

\keywords{Global invertibility; Finsler manifolds; Nonsmooth analysis}

\subjclass[2000]{49J52, 58B20, 46G05}


\maketitle


\begin{abstract}
Our aim in this paper is to study the global invertibility of a locally Lipschitz map $f:X \to Y$ between (possibly infinite-dimensional) Finsler manifolds, stressing the connections with covering properties and metric regularity of $f$. To this end, we introduce a natural notion of pseudo-Jacobian $Jf$ in this setting, as is a kind of set-valued differential object associated to $f$. By means of a suitable index, we study the relations between properties of pseudo-Jacobian $Jf$ and local metric properties of the map $f$, which lead to conditions for $f$ to be a covering map, and for $f$ to be globally invertible. In particular, we obtain a version of Hadamard integral condition in this context.
\end{abstract}


\section{Introduction}

Global invertibility of mappings is an important issue in nonlinear functional analysis. In a smooth setting, if $f:X\to Y$ is a $C^1$-map between Banach spaces satisfying that its derivative $f'(x)$ is a linear isomorphism for every $x\in X$, from the classical Inverse Function Theorem we have that $f$ is locally invertible around each point. If $f$ satisfies in addition the so-called {\it Hadamard integral condition,} that is, if
$$
\int_0^{\infty} \inf_{\Vert x \Vert \leq t} \Vert f'(x)^{-1} \Vert^{-1} \, {\rm d}t = \infty,
$$
then $f:X\to Y$ is globally invertible, and thus a global diffeomorphism from $X$ onto $Y$. We refer to Plastock \cite{Plastock} for a proof of this result.  Analogous conditions for global invertibility have been also obtained in the more general setting where $f:X\to Y$ is a $C^1$-map between Finsler manifolds, mainly in connection with covering maps, the path-lifting property, and other related topological properties. We refer to the survey of Gut\'u \cite{gutu2015} for an extensive and detailed information about global invertibility of smooth maps between  Finsler manifolds.

\

In this paper we will be interested on global invertibility in a nonsmooth setting, more precisely in the case where $f: X \to Y$ is a locally Lipschitz map between Finsler manifolds modeled on Banach spaces (in the sense of Palais \cite{palais}). When $X$ and $Y$ are finite-dimensional Finsler manifolds, this kind of problems have been considered in \cite{JaMaSa}, where in particular a version of Hadamard integral condition for global invertibility is obtained, in terms of a suitable analog of the {\it Clarke generalized Jacobian} $\partial f$. For continuous maps  $f: \mathbb R^n \to \mathbb R^n$,  Jeyakumar and Luc introduced in \cite{JL0} the more general concept of {\it approximate Jacobian matrix,} which later on was named {\it pseudo-Jacobian matrix} (see \cite{JL}). Furthermore, a global inversion theorem in terms of such matrices is given in \cite{JaMaSa1},  with a version of the Hadamard integral condition in this context. These results have been extended in \cite{GHP} to the case where $f: X \to Y$ is a continuous map between finite-dimensional Riemannian manifolds, which admits an analog of pseudo-Jacobian in this setting.

\

If $f:X\to Y$ is a nonsmooth map between infinite-dimensional Banach spaces, the problem of local invertibility of $f$ is more delicate. Assuming that $f$ is a local homeomorphism, F. John obtained in \cite{john} a global inversion theorem using a suitable version of the Hadamard integral condition in terms of the {\it lower scalar  Dini derivative} of $f$, which is defined for  each $x\in X$ as
$$
D^{-}_x f =\liminf_{z\to x} \frac{\|f(z) - f(x)\|}{\|z - x\|}.
$$
Namely, he proved that $f$ is globally invertible if
$$
\int_0^{\infty} \inf_{\Vert x \Vert \leq t}  D^{-}_x f \, {\rm d}t = \infty.
$$
Further results along this line have been obtained  in \cite{GutuJaramillo} and \cite{GaGuJa}  in the more general setting of mappings between metric spaces. In a different direction, for global inversion results in terms of Palais-Smale conditions, we refer to \cite{mopre}, \cite{gutuchang} and  \cite{gjps}, and references therein.

\

The notion of pseudo-Jacobian has been recently extended in \cite{JaLaMa} to the case of a continuous map  $f:X\to Y$  between infinite-dimensional Banach spaces, in order to obtain local and global invertibility results, in terms of the metric properties of the corresponding pseudo-Jacobian. Our aim in this paper is to study the global invertibility of locally Lipschitz maps between (possibly infinite-dimensional) Finsler manifolds, stressing the connections with their covering properties and with metric regularity. To this end, we will further extend the notion of {\it pseudo-Jacobian} to this setting. In particular, our results here extend and encompass the previous results of \cite{JaMaSa} and \cite{JaMaSa1} (extending them to an infinite-dimensional setting) and those of \cite{JaLaMa} and \cite{guturmc} (extending them to a Finsler manifold setting). The contents of the paper are as follows. In Section 2, we briefly review the definition and basic properties of pseudo-Jacobians in Banach spaces. This notion is extended  to the setting of Finsler manifolds in Section 3, where several examples are also presented. The connection of metric regularity with metric properties of pseudo-Jacobians is studied in Section 4. Here we introduce the fundamental notion of {\it Finsler regularity index} and we obtain in Theorem \ref{lemmaprincipal} its connection with the  {\it metric rate of surjection}. In Section 5 we introduce the {\it Finsler local-injectivity index} and, combining its properties with the results of the previous section, we obtain in Theorem \ref{inversemapping} a local inversion theorem in this context. Section 6 is devoted to global invertibility. This is achieved by first obtaining conditions under which our map is a covering. The main results here are Theorem \ref{covering1} and Theorem \ref{coveringweighted}. We also obtain in Corollary \ref{hadamardcondition} a version of Hadamard integral condition in this context. Finally, In Section 7 we study the stability of global invertibility under a kind of perturbation with small  Lipschitz constant, as seen in Theorem \ref{lipschitzperturbation}.


\section{\bfseries\sffamily\large Brief review of pseudo-Jacobians in Banach spaces}

In this introductory section, we will briefly recall some basic facts about pseudo-Jacobians associated to nonsmooth mappings between Banach spaces. This notion was studied in \cite{JaLaMa},  and it is the extension to the setting of arbitrary Banach spaces of the pseudo-Jacobian matrices of Jeyakumar and Luc (see \cite{JL}). In what follows, $E$ and $F$ will denote Banach spaces and $U$ a (nonempty) open subset of $E$. As usual, $E^*$ will stand for the topological dual of $E$, and the space of bounded linear operators from $E$ into $F$ will be denoted as $\mathcal{L}(E,F)$. As we will see,  is natural to consider on this space the \emph{weak operator topology} ({\tiny \emph{WOT}} for short),  that is, the topology of the pointwise convergence on $E$ with respect to the weak topology on $F$; this means that a net $(T_i)_{\alpha}$   is {\tiny \emph{WOT}}-convergent to $T$  in $\mathcal{L}(E,F)$ if, and only if, for each $y^*\in F$ and each $v\in E$ the net $(\left\langle y^*,\, (T_{\alpha}-T)(v)\right\rangle)_{\alpha}$ converges to zero.

Finally recall that, if $\varphi :U \to \mathbb R$ is a real-valued function and  $x$ is a point in $U$, then the \emph{upper} and \emph{lower right-hand Dini derivatives} of $\varphi$ at  $x$ with respect to a vector $v\in E$ are defined as
$$
\varphi'_{+}(x;\, v) = \limsup_{t\to 0^+} \frac{\varphi(x+tv)-\varphi(x)}{t}
\quad \text{and
} \quad
\varphi'_{-}(x;\, v) = \liminf_{t\to 0^+} \frac{\varphi(x+tv)-\varphi(x)}{t}.
$$
We refer to \cite{Clarkebook}, \cite{Fabian},  \cite{Ph} or \cite{shapiro} for the definition and basic properties of different kinds of differentiability of maps between Banach spaces and other unexplained notions.

\

Now we are ready to introduce the notion of pseudo-Jacobian in the setting of Banach spaces.

\begin{definition}\label{pseudo-Jacobian}{\rm \bf [Pseudo-Jacobians on Banach spaces}
{\rm Let $E$ and $F$ be Banach spaces,  $U$ be an open subset of $E$ and $f: U \to F$  a continuous map. We say that a nonempty subset  $Jf(x)\subset \mathcal{L}(E, F)$ is {\it a pseudo-Jacobian} of $f$ at a point $x\in U$ if
\begin{equation}
(y^* \circ f)'_{+}(x;\, v) \leq \sup \{ \langle y^*,\, T(v) \rangle \, : \, T\in Jf(x) \} \quad \text{whenever} \,\, y^* \in F^* \,\, \text{and} \,\,  v\in E.
\end{equation}
A set-valued mapping $Jf:U\to 2^{\mathcal{L}(E,F)}$ is said to be  {\it a pseudo-Jacobian mapping} for $f$ on $U$ if for every $x\in U$, the set $Jf(x)$ is a pseudo-Jacobian of $f$ at $x$.}
\end{definition}

Let us summarize some basic facts about pseudo-Jacobians. It is clear that, if $Jf(x)$ is a pseudo-Jacobian of $f$ at the point $x$, then any subset of $\mathcal{L}(E,F)$ containing $Jf(x)$ is also a pseudo-Jacobian of $f$ at $x$.  Moreover, for any set $\mathcal{F}\subset \mathcal{L}(E, F)$ we have
$$
\sup \left\{\left\langle y^*,\, T(v)\right\rangle \, : \, T\in \mathcal{F} \right\}= \sup \left\{\left\langle y^*,\, T(v)\right\rangle \, :\, T\in \overline{\co}^{WOT} (\mathcal{F})\right\},
$$
where $\overline{\co}^{WOT} (\mathcal{F})$ denotes the {\tiny \emph{WOT}}-closed convex hull of $\mathcal{F}$. Thus, a subset $Jf(x)\subset \mathcal{L}(E, F)$ is a pseudo-Jacobian of $f$ at $x$ if, and only if, so is its {\tiny \emph{WOT}}-closed convex hull.

\

The same argument as in the finite-dimensional case (see \cite[Theorem 2.1.1]{JL}) yields that if $Jf(x)$ and $Jg(x)$ are pseudo-Jacobians of functions $f,\, g:U\subset X\to Y$ at a point $x\in U$ and $\alpha\in \mathbb R$, then the set $\alpha Jf(x) + Jg(x) = \{\alpha T+S:\, T\in Jf(x),\, S\in Jg(x)\}$ is a pseudo-Jacobian of $\alpha f + g$ at this point.\\

There are many examples pseudo-Jacobians for different kinds of functions. For instance, if the function $f$ is G\^ateaux differentiable at $x$ then the singleton $\{f'(x)\}$ is a pseudo-Jacobian of $f$ at $x$ (see \cite[Example 2.2]{JaLaMa}). More generally, according to the definition given by Ioffe in \cite{Ioffe}, the G\^ateaux prederivative of $f$ at the point $x$ is also a pseudo-Jacobian of $f$ at $x$ (see \cite[Example 2.3]{JaLaMa}).\\

On the other hand, if the function $f:E \to \mathbb R$ is locally Lipschitz at a point $x$, the Clarke subdifferential $\partial f(x)$ is a pseudo-Jacobian of $f$ at $x$. More generally, in the vector-valued case,  the so-called Clarke-like generalized Jacobians (as considered in \cite{Thibault}) are pseudo-Jacobians of $f$ at $x$ (see \cite[Examples 2.4 and 2.5]{JaLaMa}). Regarding to Clarke-like generalized Jacobians, for a locally Lipschitz map $f:E \to F$, where $F$ is reflexive, the P\'ales-Zeidan generalized Jacobian $\partial_{PZ}f(x)$ as defined in \cite{PZ-07}, is a pseudo-Jacobian of $f$ at every point $x\in E$ (see \cite[Examples 2.6]{JaLaMa}). Note that, in the case that $E$ and $F$ are finite-dimensional, the P\'ales-Zeidan generalized Jacobian coincides with the Clarke generalized Jacobian. Let us briefly recall the definition of $\partial_{PZ}f(x)$. Note that, in our case, since the space $F$ is reflexive it has the Radon-Nikod\'ym property, and furthermore the topology $\beta (E, V)=\beta(E, F^*)$ considered in \cite{PZ-07} coincides with the {\tiny \emph{WOT}}-topology on ${\mathcal L}(E, F)$. Now given a finite-dimensional linear subspace $L\subset E$, we say that $f$ is {\it $L$-G\^ateaux-differentiable} at  $z\in E$ if there exists a continuous linear map $D_L(z):L\to F$ such that
$$
\lim_{t \to 0} \frac{f(z+tv) - f(z)}{t}= D_L f(z)(v), \quad \text{for every} \quad v\in L.
$$
Denote by $\Omega_L(f)$ the set of all points $z\in E$ such that $f$ is $L$-G\^ateaux-differentiable at $z$, and let the {\it generalized L-Jacobian} of $f$ at $x$ be the subset of $\mathcal{L}(L,\, F)$ defined as
$$
\partial_L f(x) := \bigcap_{\delta>0} \overline{co}^{\tiny WOT} \{ D_L f(z) \, : \,  z \in \Omega_L(f), \, |z-x|_E <\delta \},
$$
where $|\cdot|_E$ denotes the norm of $E$. Then the \emph{P\'ales-Zeidan generalized Jacobian of $f$ at the point $x$} is defined as
$$\partial_{PZ} f(x) = \left\{T\in \mathcal{L}(E,\, F):\,\, T_{|_ L}\in \partial_L f(x),\,\, \text{for each finite dimensional subspace}\, L\subset X\right\}.$$

\

The following property of pseudo-Jacobians will be useful in the sequel.

\begin{definition}{\rm \bf [Locally bounded pseudo-Jacobian]}
{\rm Let $E, F$ be Banach spaces,  $U$  an open subset of $E$ and  $f:U\to F$ a continuous map. A pseudo-Jacobian mapping  $Jf: U \to 2^{{\mathcal L}(E, F)}$ for $f$ is said to be  \emph{locally bounded}
at a point $x\in U$, if there exists $R>0$ such that the ball $B(x, R) \subset U$ and the set of operators
$$Jf\left(B(x, R) \right) = \left\{T \, : \, T\in Jf(z),\, z\in B(x, R) \right\}$$
is bounded in the space $\mathcal{L}(E,F)$. We say that $Jf$ is \emph{locally bounded}
\emph{on}  $U$ if this holds for every $x\in U$.}
\end{definition}

As shown in \cite[Corollary 2.8]{JaLaMa}, a continuous map $f:U\to F$ admits a locally bounded pseudo-Jacobian mapping if, and only if, $f$ is locally Lipschitz.


\section{\bfseries\sffamily\large Pseudo-Jacobians on Finsler manifolds}

Along the paper we will consider $C^1$-smooth manifolds modeled on (possibly infinite-dimensional) Banach spaces. We will follow the terminology of Palais in \cite{palais}. For the definition of pseudo-Jacobians in this context, we will restrict ourselves to the case of locally Lipschitz mappings, which can be defined by means of composition with charts.

\subsection*{Locally Lipschitz continuous maps: 1st definition \cite[Definitions 1.1 and 1.3]{palais}}
Let $X$ and $Y$  be two $C^1$ manifolds modeled on Banach spaces $(E, |\cdot|_E)$ and  $(F, |\cdot|_F)$, respectively, and  let $f:X\rightarrow Y$ be a map. We say that $f$ is {\it locally Lipschitz}  at $x\in X$ if there are charts $(W,\varphi)$ at $x$ and $(V,\psi)$ at $f(x)$, such that $f(W)\subset V$ and the map
$$\mathbf{f} = \psi \circ f \circ \varphi^{-1}:\varphi(W)\rightarrow \psi(V)$$
is Lipschitz continuous on $\varphi(W)$. Namely for all $\mathbf{u},\mathbf{u}'\in\varphi(W)$ and some $\kappa>0$:
\begin{equation}\label{lipcondition}
|\mathbf{f}(\mathbf{u})-\mathbf{f}(\mathbf{u}')|_F\leq \kappa |\mathbf{u}-\mathbf{u}'|_E.
\end{equation}
Obviously, the above definition does not depend on the choice of charts.

Our definition of pseudo-Jacobian for functions between manifolds will be also given by composition with charts. In order to have a good behavior with respect to this composition, in the definition we require the corresponding pseudo-Jacobian mapping between Banach spaces to be locally bounded. If $X$ is a $C^1$ manifold modeled on Banach space an each $x\in X$, the tangent space of $X$ at the point $x$ will be denoted by $T_xX$.

\begin{definition}\label{PJ}{\rm \bf [Pseudo-Jacobians mappings on manifolds]} {\rm
Let $X$ and $Y$  be two  $C^1$ manifolds modeled on Banach spaces $E$ and  $F$, respectively, and let  $f:X\rightarrow Y$  be a locally Lipschitz  map. Suppose that, for each $x\in X$ we have a subset $Jf(x)$ of linear operators from $T_xX$ to $T_{f(x)}Y$. We say that $Jf$ is {\it a pseudo-Jacobian mapping} for $f$   if for each $x\in X$ there exists a chart $(W,\varphi)$ at $x$ and a chart $(V,\psi)$ at $f(x)$ such that $f(W)\subset V$ and:
\begin{itemize}
\item[(PJ$_1$)] The function
\begin{equation}\label{boldf}
\mathbf{f} = \psi \circ f \circ \varphi^{-1}:\varphi(W)\rightarrow \psi(V)
\end{equation} has a pseudo-Jacobian $J\mathbf{f}(\mathbf{u})\subset \mathcal{L}(E,F)$ at every point $\mathbf{u}\in\varphi(W)$.
\item[(PJ$_2$)]  The pseudo-Jacobian mapping $J\mathbf{f}: \varphi(W)\rightarrow 2^{\mathcal{L}(E,F)}$ is  locally bounded on $\varphi(W)$.
\item[(PJ$_3$)]  If $\mathbf{y}=\psi(f(x))\in\psi(V)$, then
$$Jf(x)=d\psi^{-1}(\mathbf{y}) J\mathbf f(\mathbf{x})[d\varphi^{-1}(\mathbf{x})]^{-1}$$
\end{itemize}
This means that every $T\in Jf(x)\subset L(T_xX,T_{f(x)}Y)$ is of the form $d\psi^{-1}(\mathbf{y}) \hspace{0.01in}\mathbf{T} \hspace{0.01in}[d\varphi^{-1}(\mathbf{x})]^{-1}$ where  $\mathbf{T}\in  J\mathbf f(\mathbf{x})\subset \mathcal{L}(E,F)$
and vice-versa:

\begin{center}
\begin{tikzpicture}
  \node (A) { \hspace{0.2in}$E$ \hspace{0.2in}};
  \node (B) [below=of A] {$T_xX$};
  \node (C) [right=of A] { \hspace{0.1in}$F$ \hspace{0.1in} };
  \node (D) [right=of B] {$T_{f(x)}Y$};
  \draw[stealth-] (A)-- node[left] {\small \begin{tabular}{c} Isomorphism \\ $[d\varphi^{-1}(\mathbf{x})]^{-1}$ \end{tabular}} (B);
  \draw[-stealth] (B)-- node [below] {\small $T$} (D);
  \draw[-stealth] (A)-- node [above] {\small $\mathbf{T}$} (C);
  \draw[-stealth] (C)-- node [right] {\small \begin{tabular}{c} Isomorphism \\ $d\psi^{-1}(\mathbf{y})$\end{tabular}} (D);
\end{tikzpicture}
\end{center}}
\end{definition}

The stability properties of pseudo-Jacobians under composition with smooth functions, as obtained in \cite{JaLaMa}, yield the following result.

\begin{proposition}
The definition of pseudo-Jacobian on Banach manifolds does not depend on charts {\rm in the following sense: let  $(W_1,\varphi_1)$  be a chart at $x$ and $(V_1,\psi_1)$ be a chart at $f(x)$  such that $f(W_1)\subset V_1$ and  {\rm (PJ$_1$), (PJ$_2$)} and {\rm (PJ$_3$)} holds for
$\mathbf{f}_1 = \psi_1 \circ f\circ \varphi_1^{-1}:\varphi_1(W_1)\rightarrow \psi_1(V_1)$,  $\mathbf{x}_1=\varphi_1(x)$ and  $\mathbf{y}_1=\psi_1(f(x))$. If  $(W_2,\varphi_2)$  and $(V_2,\psi_2)$  are another charts around $x$ and $f(x)$ respectively,  such that $f(W_2)\subset V_2$, then there is a locally bounded pseudo-Jacobian mapping
$J\mathbf{f_2}: \varphi_2(W_2)\rightarrow 2^{\mathcal{L}(E,F)}$ such that:
$$d\psi_1^{-1}(\mathbf{y}_1) J\mathbf f_1(\mathbf{x}_1)[d\varphi_1^{-1}(\mathbf{x}_1)]^{-1}=d\psi_2^{-1}(\mathbf{y}_2) J\mathbf f_2(\mathbf{x}_2)[d\varphi_2^{-1}(\mathbf{x}_2)]^{-1},$$
where $\mathbf{f}_2 = \psi_2 \circ f\circ \varphi_2^{-1}:\varphi_2(W_2)\rightarrow \psi_2(V_2)$,  $\mathbf{x}_2=\varphi_2(x)$ and $\mathbf{y}_2=\psi_2(f(x))$. }
\end{proposition}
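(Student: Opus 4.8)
The plan is to use that, on the overlap of the two chart domains, the local representatives $\mathbf{f}_1$ and $\mathbf{f}_2$ of $f$ differ only by the $C^1$ transition diffeomorphisms of the underlying atlas, and then to transport $J\mathbf{f}_1$ by the chain rule for pseudo-Jacobians under composition with smooth maps (the stability properties of \cite{JaLaMa} referred to above). Write the transition maps
$$\theta := \varphi_1\circ\varphi_2^{-1}, \qquad \eta := \psi_2\circ\psi_1^{-1},$$
which are $C^1$ diffeomorphisms between open subsets of $E$ and of $F$, respectively. Since $f(x)\in V_1\cap V_2$ and $f$ is continuous, after shrinking $W_2$ to a smaller neighborhood of $x$ contained in $W_1\cap W_2$ with $f(W_2)\subset V_1\cap V_2$ (the identity to be proved only concerns the base point $x$, so this restriction is harmless) we have, on $\varphi_2(W_2)$,
$$\mathbf{f}_2 = \eta\circ\mathbf{f}_1\circ\theta.$$

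First I would apply the chain rule. Because $\theta$ is $C^1$ and $\mathbf{f}_1$ has the pseudo-Jacobian $J\mathbf{f}_1$, the composition $\mathbf{f}_1\circ\theta$ has the pseudo-Jacobian $\mathbf{u}\mapsto J\mathbf{f}_1(\theta(\mathbf{u}))\,d\theta(\mathbf{u})$; post-composing with the $C^1$ map $\eta$ then shows that
$$J\mathbf{f}_2(\mathbf{u}) := d\eta\big(\mathbf{f}_1(\theta(\mathbf{u}))\big)\,J\mathbf{f}_1\big(\theta(\mathbf{u})\big)\,d\theta(\mathbf{u})$$
is a pseudo-Jacobian of $\mathbf{f}_2$ at each $\mathbf{u}\in\varphi_2(W_2)$, so (PJ$_1$) holds. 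For (PJ$_2$), I would note that $d\theta$ and $d\eta$ are continuous, hence bounded on a neighborhood of the relevant points, while $J\mathbf{f}_1$ is locally bounded by hypothesis; the product of a locally bounded family of operators with fixed bounded operators on the left and on the right is again locally bounded, so $J\mathbf{f}_2$ is locally bounded. If one insists on a mapping defined on all of the original $\varphi_2(W_2)$, one extends it outside the common neighborhood by any locally bounded pseudo-Jacobian of $\mathbf{f}_2$, which exists since $\mathbf{f}_2$ is locally Lipschitz, by the characterization recalled at the end of Section 2.

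Next I would verify the coincidence identity at the base point. Evaluating at $\mathbf{x}_2=\varphi_2(x)$ and using $\theta(\mathbf{x}_2)=\mathbf{x}_1$, $\mathbf{f}_1(\mathbf{x}_1)=\mathbf{y}_1$ and $\eta(\mathbf{y}_1)=\mathbf{y}_2$, the definition gives $J\mathbf{f}_2(\mathbf{x}_2)=d\eta(\mathbf{y}_1)\,J\mathbf{f}_1(\mathbf{x}_1)\,d\theta(\mathbf{x}_2)$. Differentiating the relations $\psi_2^{-1}=\psi_1^{-1}\circ\eta^{-1}$ and $\theta=\varphi_1\circ\varphi_2^{-1}$ at the appropriate points and using the inverse-function rule yields the two cancellations
$$d\psi_2^{-1}(\mathbf{y}_2)\,d\eta(\mathbf{y}_1)=d\psi_1^{-1}(\mathbf{y}_1), \qquad d\theta(\mathbf{x}_2)\,[d\varphi_2^{-1}(\mathbf{x}_2)]^{-1}=[d\varphi_1^{-1}(\mathbf{x}_1)]^{-1}.$$
Substituting these into $d\psi_2^{-1}(\mathbf{y}_2)\,J\mathbf{f}_2(\mathbf{x}_2)\,[d\varphi_2^{-1}(\mathbf{x}_2)]^{-1}$ collapses it to $d\psi_1^{-1}(\mathbf{y}_1)\,J\mathbf{f}_1(\mathbf{x}_1)\,[d\varphi_1^{-1}(\mathbf{x}_1)]^{-1}$, which is exactly the asserted equality of subsets of $\mathcal{L}(T_xX,T_{f(x)}Y)$.

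The only genuinely delicate points, which I would treat with care rather than as routine bookkeeping, are: first, checking that the chain rule of \cite{JaLaMa} applies with a $C^1$ (not necessarily Fr\'echet-smooth) transition map and produces an \emph{exact} pseudo-Jacobian on both sides of the composition, rather than merely a WOT-closed convex superset; and second, confirming that the diffeomorphism property of $\theta$ and $\eta$ makes $d\theta(\mathbf{x}_2)$ and $d\eta(\mathbf{y}_1)$ topological isomorphisms, so that the cancellations above are legitimate and $J\mathbf{f}_2(\mathbf{x}_2)$ indeed consists of operators of the form required in (PJ$_3$). Everything else is the straightforward tracking of base points through the transition maps.
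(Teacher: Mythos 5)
Your proof is correct and follows essentially the same route as the paper: the paper likewise factors $\mathbf{f}_2 = \Psi\circ\mathbf{f}_1\circ\Phi$ through the transition diffeomorphisms $\Phi=\varphi_1\circ\varphi_2^{-1}$ and $\Psi=\psi_2\circ\psi_1^{-1}$ (your $\theta$ and $\eta$), invokes the composition-stability results of \cite{JaLaMa} (Proposition 2.12 for pre-composition, Theorem 2.15 for post-composition) to define $J\mathbf{f}_2(\mathbf{u}) = d\Psi(\mathbf{f}_1(\Phi(\mathbf{u})))\circ J\mathbf{f}_1(\Phi(\mathbf{u}))\circ d\Phi(\mathbf{u})$, and then evaluates at the base point via the same inverse-function identities you use for the cancellations. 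Your explicit handling of the domain issue (shrinking $W_2$ so that $f(W_2)\subset V_1\cap V_2$ and extending $J\mathbf{f}_2$ off the overlap by an arbitrary locally bounded pseudo-Jacobian) is a minor tightening of a point the paper leaves implicit.
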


\begin{proof}
Indeed, consider the following $C^1$-diffeomorphisms:
\begin{eqnarray*}
\Psi &=& \psi_2\circ\psi_1^{-1}:\psi_1(V_1\cap V_2)\rightarrow F\\
\Phi &=& \varphi_1\circ\varphi_2^{-1}: \varphi_2(W_1 \cap W_2)\rightarrow E
\end{eqnarray*}
\noindent By \cite[Proposition 2.12]{JaLaMa} $J\mathbf{g}(\mathbf{x}):=J\mathbf{f}_1(\Phi(\mathbf{x}))\circ d\Phi(\mathbf{x})$ is a pseudo-Jacobian for $\mathbf{g} :=\mathbf{f}_1\circ \Phi:  \varphi_2(W_2)\rightarrow \psi_1(V_1)$ at every $\mathbf{x}\in \varphi_2(W_2)$. Furthermore the set-valued function $\mathbf{x}\mapsto J\mathbf{g}(\mathbf{x})$ is a locally bounded pseudo-Jacobian mapping since $\Phi$ is a diffeomorphism. Now, by \cite[Theorem 2.15]{JaLaMa}, the set
$d\Psi(\mathbf{g}(\mathbf{x}))\circ J\mathbf{g}(\mathbf{x})$
is a pseudo-Jacobian of $\Psi\circ\mathbf{g}$ at $\mathbf{x}$, and therefore
$$J({\Psi\circ\mathbf{f}_1\circ \Phi})(\mathbf{x})= d\Psi(\mathbf{g}(\mathbf{x}))\circ J\mathbf{f}_1(\Phi(\mathbf{x}))\circ d\Phi(\mathbf{x})$$
is a pseudo-Jacobian of $\Psi\circ\mathbf{f}_1\circ \Phi$ at $\mathbf{x}\in\varphi_2(W_2)$. Simple calculations show that $\Psi\circ\mathbf{f}_1\circ \Phi = \mathbf{f_2}$, $J\mathbf{f}_1(\Phi(\mathbf{x}_2)) =  J\mathbf{f}_1 (\mathbf{x}_1)$,
$d\Psi(\mathbf{g}(\mathbf{x}_2)) = [d \psi_2^{-1} (\mathbf{y}_2)]^{-1} \circ d\psi_1^{-1}(\mathbf{y}_1)$ and $d\Phi(\mathbf{x}_2) =  [d \varphi_1^{-1}(\mathbf{x_1})]^{-1}\circ d\varphi_2^{-1}(\mathbf{x}_2)$. Therefore:
\begin{eqnarray*}
J \mathbf{f_2}(\mathbf{x_2}) & = &  d\Psi(\mathbf{g}(\mathbf{x}_2))\circ J\mathbf{f}_1(\Phi(\mathbf{x}_2))\circ d\Phi(\mathbf{x_2})\\
& = &  [d \psi_2^{-1} (\mathbf{y}_2)]^{-1} \circ d\psi_1^{-1}(\mathbf{y}_1)\circ  J\mathbf{f}_1 (\mathbf{x}_1)\circ  [d \varphi_1^{-1}(\mathbf{x_1})]^{-1}\circ d\varphi_2^{-1}(\mathbf{x}_2).
\end{eqnarray*} The set-valued function  $\mathbf{x}\mapsto J{\mathbf{f}_2}(\mathbf{x}) = J({\Psi\circ\mathbf{f}_1\circ \Phi})(\mathbf{x})$ is a locally bounded pseudo-Jacobian mapping since $\Psi$ and $\Phi$ are $C^1$-diffeomorphisms.
\end{proof}

\subsection*{Examples} We give here several  natural examples of pseudo-Jacobians for a locally Lipschitz map in the setting of Banach manifolds.
\begin{enumerate}
\item {\bf [G\^{ateaux} derivative]} Let $X$ and $Y$ be two $C^1$ Banach manifolds, modeled on Banach spaces $E$ and $F$, respectively. We say that a locally Lipshitz map $f:X \to Y$ is {\it G\^{ateaux} differentiable} at a point $x\in X$ if there exist charts $(W,\varphi)$ at $x$ and $(V,\psi)$ at $f(x)$ such that $f(W)\subset V$ and the map $\mathbf{f} := \psi \circ f \circ \varphi^{-1}:\varphi(W)\rightarrow \psi(V)$ is G\^{ateaux} differentiable at $\mathbf{x}=\varphi(x)$. In this case the {\it G\^{ateaux} derivative} of $f$ at $x$, denoted by $df(x)$ is defined as the linear map $T: T_xX \to T_{f(x)}Y$ given by
    $$
    T = d(\psi^{-1})(\psi (f(x)) \circ d\mathbf{f}(\mathbf{x}) \circ d \varphi (x),
    $$
    where $d\mathbf{f}(\mathbf{x})$ denotes the usual G\^{ateaux} derivative of $\mathbf{f}$ at $\mathbf{x}$ in the setting of Banach spaces. In order to see that the above definition does not depend on charts note that,  for locally Lipschitz maps between Banach spaces, G\^{ateaux} differentiability is equivalent to Hadamard differentiability (see e.g. Proposition 3.5 in \cite{shapiro}) and note also that the usual chain rule holds for Hadamard differentiability (see e.g. Proposition 3.6 in \cite{shapiro}). In this way, and using \cite[Example 2.2]{JaLaMa},  we  obtain that if $f$ is G\^{ateaux} differentiable at every point of $X$ and we define $Jf(x):= \{df(x)\}$ for each $x\in X$,  then $Jf$ is a pseudo-Jacobian mapping for $f$.

\item {\bf [Clarke generalized Jacobian]} In the case that $X$ and $Y$ are finite-dimensional $C^1$ manifolds, and $f:X \to Y$ is locally Lipschitz, the {\it Clarke generalized Jacobian}  of $f$ at a point $x\in X$ has been considered in \cite{JaMaSa} and it is defined as:
    $$
    \partial f(x):= d(\psi^{-1})(\psi (f(x)) \circ \partial\mathbf{f}(\mathbf{x}) \circ d \varphi (x),
    $$
    where $(W,\varphi)$ is a chart of $X$ at $x$ and $(V,\psi)$ is a chart of $Y$ at $f(x)$ with $f(W)\subset V$, we denote as before $\mathbf{f}:= \psi \circ f \circ \varphi^{-1}$ and $\partial\mathbf{f}(\mathbf{x})$ denotes the usual Clarke generalized Jacobian of $\mathbf{f}$ at $\mathbf{x}=\varphi(x)$. It is proved in Proposition 2.3 of \cite{JaMaSa} that the definition does not depend on charts. Therefore, if we define $Jf(x):= \partial f(x)$ for each $x\in X$, using \cite[Example 2.5]{JaLaMa} we obtain that $Jf$ is a pseudo-Jacobian mapping for $f$.

\item {\bf [P\'ales-Zeidan generalized Jacobian]} Let $X$ and $Y$ be two $C^1$ Banach manifolds, modeled on Banach spaces $E$ and $F$ respectively, where $F$ is reflexive, and let $f:X \to Y$ a locally Lipschitz map. The {\it P\'ales-Zeidan generalized Jacobian} of $f$ at a point $x\in X$  is defined as:
    $$
    \partial_{PZ} f(x):= d(\psi^{-1})(\psi (f(x)) \circ \partial_{PZ}\mathbf{f}(\mathbf{x}) \circ d \varphi (x),
    $$
    where $(W,\varphi)$ is a chart of $X$ at $x$ and $(V,\psi)$ is a chart of $Y$ at $f(x)$ with $f(W)\subset V$, we denote as before $\mathbf{f}:= \psi \circ f \circ \varphi^{-1}$ and $\partial_{PZ}\mathbf{f}(\mathbf{x})$ denotes the usual P\'ales-Zeidan generalized Jacobian of $\mathbf{f}$ at $\mathbf{x}=\varphi(x)$ in the Banach space setting. We are going to see in Proposition \ref{PZ-equivalence} that the above definition does not depend on charts. Then using again  \cite[Example 2.5]{JaLaMa}, if we define $J_{PZ}f(x):= \partial_{PZ} f(x)$ for each $x\in X$ we obtain that  $Jf$ is a pseudo-Jacobian mapping for $f$.

\end{enumerate}

\

For completeness, we include the following elementary Lemma.

\begin{lemma}\label{simple}
Let $U$ and $V$ be open subsets of the Banach spaces $E$ and $F$, respectively. Suppose that $\varphi : U\to E$ and $\psi : V\to F$ are $C^1$-smooth and $f:W \to V$ is a Lipschitz map defined on a open set $W$ containing $\varphi (U)$. Fix a point $a \in U$ and a finite-dimensional linear subspace $L\subset E$, and denote $K:=d\varphi(a)(L)$. If $f$ is G\^{a}teaux-K-differentiable at $\varphi (a)$, then $\psi \circ f \circ \varphi$ is G\^{a}teaux-L-differentiable at $a$, and the chain rule holds:
$$
D_L (\psi \circ f \circ \varphi) (a) = d \psi (f(\varphi (a)) \circ D_K f(\varphi (a)) \circ d\varphi (a).
$$
\end{lemma}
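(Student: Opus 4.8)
The plan is to verify the definition of G\^ateaux-$L$-differentiability directly: for each fixed $v\in L$ I will show that the one-variable limit defining the directional derivative of $g:=\psi\circ f\circ\varphi$ at $a$ along $v$ exists and equals the asserted value, and then check that the resulting map $L\to F$ is continuous and linear. The essential point to keep in mind throughout is that $f$ is \emph{not} assumed differentiable, only G\^ateaux-$K$-differentiable along the finite-dimensional subspace $K=d\varphi(a)(L)$; hence a naive chain rule is unavailable, and the Lipschitz hypothesis on $f$ is exactly what will make the argument go through.

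First I would peel off the outer smooth map $\varphi$. Write $b=\varphi(a)$, and for $v\in L$ set $w:=d\varphi(a)(v)$, so that $w\in d\varphi(a)(L)=K$. Since $\varphi$ is $C^1$ it is Fr\'echet differentiable at $a$, giving
$$
\varphi(a+tv)=b+tw+r(t),\qquad |r(t)|_E=o(t)\quad(t\to 0).
$$
For $|t|$ small both $\varphi(a+tv)\in\varphi(U)\subset W$ and $b+tw\in W$ (as $W$ is open and contains $b$), so with $\kappa$ a Lipschitz constant for $f$ we obtain
$$
|f(\varphi(a+tv))-f(b+tw)|_F\le \kappa\,|r(t)|_E=o(t).
$$
This is the key step: it lets me replace the perturbed argument $\varphi(a+tv)$ by the linearized argument $b+tw$ at the cost of an $o(t)$ error, without ever differentiating $f$ off the subspace $K$.

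Next I would invoke the hypothesis that $f$ is G\^ateaux-$K$-differentiable at $b$. Since $w\in K$, the difference quotient $\big(f(b+tw)-f(b)\big)/t$ converges to $D_Kf(b)(w)$ as $t\to 0$. Combining this with the previous estimate, and writing $g_0:=f\circ\varphi$, I get
$$
\frac{g_0(a+tv)-g_0(a)}{t}=\frac{f(b+tw)-f(b)}{t}+\frac{f(\varphi(a+tv))-f(b+tw)}{t}\longrightarrow D_Kf(b)(w)\quad(t\to 0).
$$
Hence $g_0$ has a directional derivative at $a$ along every $v\in L$, equal to $D_Kf(b)\big(d\varphi(a)(v)\big)$, and in particular $g_0(a+tv)=g_0(a)+t\,D_Kf(b)(w)+o(t)$. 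Finally I compose with $\psi$, which is $C^1$ and hence Fr\'echet differentiable at $c:=f(b)=g_0(a)$. Substituting the expansion of $g_0$ into the Fr\'echet expansion of $\psi$ at $c$ and using that $d\psi(c)$ is bounded linear to absorb the $o(t)$ terms yields
$$
\frac{(\psi\circ g_0)(a+tv)-(\psi\circ g_0)(a)}{t}\longrightarrow d\psi(c)\big(D_Kf(b)(d\varphi(a)(v))\big)\quad(t\to 0).
$$
Since this holds for every $v\in L$ and the composite $d\psi(c)\circ D_Kf(b)\circ d\varphi(a):L\to F$ (with $d\varphi(a)$ restricted to $L$) is a composition of bounded linear maps, it is the required continuous linear $L$-derivative, and the chain-rule formula follows.

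The hard part is really only the Lipschitz absorption step, where the non-differentiability of $f$ forces me to compare $f(\varphi(a+tv))$ with $f(b+tw)$ rather than to differentiate the composite directly; once that $o(t)$ control is in hand, everything else is the standard manipulation of Fr\'echet and directional expansions, together with the observation that $w$ indeed lands in $K$ so that the $K$-derivative of $f$ can be applied.
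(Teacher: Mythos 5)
Your proof is correct, and its first half is exactly the paper's argument: both you and the authors handle the inner composition $f \circ \varphi$ by writing $\varphi(a+tv) = \varphi(a) + t\,d\varphi(a)(v) + o(t)$ and using the Lipschitz constant of $f$ to replace $f(\varphi(a+tv))$ by $f(\varphi(a)+t\,d\varphi(a)(v))$ at an $o(t)$ cost, after which the G\^ateaux-$K$-differentiability of $f$ finishes the directional limit. Where you diverge is in composing with the outer map $\psi$. The paper restricts $f\circ\varphi$ to the finite-dimensional slice $a+L$, observes that this restriction is locally Lipschitz and G\^ateaux differentiable at $0$, invokes the upgrade G\^ateaux $\Rightarrow$ Hadamard $\Rightarrow$ Fr\'echet (the last step using finite-dimensionality of $L$, citing Shapiro's Propositions 3.5 and 3.6), and then applies the standard Fr\'echet chain rule to $\psi\circ g$. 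You instead verify the directional limit for $\psi\circ f\circ\varphi$ directly: for fixed $v$, the increment $h_t = (f\circ\varphi)(a+tv)-(f\circ\varphi)(a)$ is $O(t)$, so the Fr\'echet expansion of $\psi$ at $f(\varphi(a))$ has error $o(\lVert h_t\rVert)=o(t)$, and boundedness of $d\psi$ absorbs the remaining $o(t)$. Your route is more elementary and self-contained (it needs no differentiability-upgrade results, uses the Lipschitz hypothesis only once, and in fact never exploits finite-dimensionality of $L$ in the composition step), while the paper's route trades that explicit epsilon-management for two citations and gets, as a by-product, genuine Fr\'echet differentiability of the restriction of $f\circ\varphi$ to $a+L$ rather than just directional derivatives. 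Both yield the same chain-rule formula.
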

\begin{proof}
Consider first the composition $f \circ \varphi : U \to F$. Suppose that $f$ is $M$-Lipschitz.  Choose a vector $v\in L$ and denote $w=d\varphi (a)$. Then, for $t\neq 0$:
$$
\left \Vert \frac{ f \circ \varphi (a+tv) - f\circ \varphi (a)}{t} - D_K f (\varphi (a))\circ d\varphi (v) \right \Vert
$$
$$
\leq \left \Vert \frac { f(\varphi (a+tv)) - f(\varphi (a) + t w)}{t} \right \Vert +
\left \Vert \frac{f(\varphi (a) + t w) - D_K f (\varphi (a)) (w)}{t}\right \Vert
$$
$$
\leq M \, \left \Vert \frac { \varphi (a+tv) - \varphi (a) + t d\varphi (a)(v))}{t} \right \Vert + \left \Vert \frac{f(\varphi (a) + t w) - D_K f (\varphi (a)) (w)}{t}\right \Vert,
$$
and this tends to $0$ as $t \to 0$. This shows that  $f \circ \varphi$ is G\^{a}teaux-L-differentiable at $a$.

Consider now the map $g: L\cap (-a+U) \to E$ defined by $g(x):= f \circ \varphi (a+x)$. Since $g$ is G\^{a}teaux differentiable at $0$, and furthermore $g$ is locally Lipschitz,  we have that $g$ is Hadamard differentiable at $0$. In fact, since $g$ is defined on a finite-dimensional space, $g$ is Fr\'echet differentiable at $0$ (see e.g. Propositions 3.5 and 3.6 of \cite{shapiro}). As a consequence, $\psi \circ g$ is Fr\'echet differentiable at $0$ and the usual chain rule holds. This gives the desired result.
\end{proof}

Now we are ready to show that the P\'ales-Zeidan generalized Jacobian on manifolds is well-defined.

\begin{proposition}\label{PZ-equivalence}
{\rm The definition of P\'ales-Zeidan generalized Jacobian on Banach manifolds does not depend on charts.}
\end{proposition}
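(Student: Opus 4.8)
The plan is to reduce chart-independence to a chain rule for $\partial_{PZ}$ under composition with $C^1$-diffeomorphisms, exactly as in the proof of the preceding Proposition. Setting $\Psi=\psi_2\circ\psi_1^{-1}$ and $\Phi=\varphi_1\circ\varphi_2^{-1}$, so that $\mathbf{f}_2=\Psi\circ\mathbf{f}_1\circ\Phi$, the assertion is equivalent to
\[
\partial_{PZ}\mathbf{f}_2(\mathbf{x}_2)=d\Psi(\mathbf{y}_1)\circ\partial_{PZ}\mathbf{f}_1(\mathbf{x}_1)\circ d\Phi(\mathbf{x}_2).
\]
Since $\partial_{PZ}$ is defined from the generalized $L$-Jacobians $\partial_L$ by restriction to all finite-dimensional subspaces, and the isomorphism $d\Phi(\mathbf{x}_2)$ carries finite-dimensional subspaces $L\subset E$ bijectively onto finite-dimensional subspaces $K:=d\Phi(\mathbf{x}_2)(L)$, a short diagram chase (using that $d\Psi(\mathbf{y}_1)$ and $d\Phi(\mathbf{x}_2)$ are isomorphisms) reduces the problem to proving, for every such $L$, the identity
\[
\partial_L\mathbf{f}_2(\mathbf{x}_2)=d\Psi(\mathbf{y}_1)\circ\partial_K\mathbf{f}_1(\mathbf{x}_1)\circ\bigl[d\Phi(\mathbf{x}_2)|_L\bigr].
\]

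I would establish this through the intermediate map $\mathbf{h}:=\mathbf{f}_1\circ\Phi$, handling postcomposition by $\Psi$ and precomposition by $\Phi$ separately. Postcomposition is the easy half, since the direction subspace $L$ is preserved: at points of $L$-differentiability one has $D_L(\Psi\circ\mathbf{h})(z)=d\Psi(\mathbf{h}(z))\circ D_L\mathbf{h}(z)$, and, $\Psi$ being $C^1$, the multiplier $d\Psi(\mathbf{h}(z))$ tends to $d\Psi(\mathbf{y}_1)$ as $z\to\mathbf{x}_2$. As left composition with a fixed bounded isomorphism is WOT-continuous and commutes with both $\overline{\co}^{WOT}$ and the intersection over $\delta$, while the family $\{D_L\mathbf{h}(z)\}$ is bounded by local Lipschitzness, one obtains $\partial_L(\Psi\circ\mathbf{h})(\mathbf{x}_2)=d\Psi(\mathbf{y}_1)\circ\partial_L\mathbf{h}(\mathbf{x}_2)$.

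The substance is the precomposition identity $\partial_L\mathbf{h}(\mathbf{x}_2)=\partial_K\mathbf{f}_1(\mathbf{x}_1)\circ[d\Phi(\mathbf{x}_2)|_L]$. Applying Lemma \ref{simple} to $\Phi$ and to $\Phi^{-1}$ yields the two-sided correspondence of differentiability points and the pointwise chain rule $D_L\mathbf{h}(z)=D_{K_z}\mathbf{f}_1(\Phi(z))\circ[d\Phi(z)|_L]$, where $K_z:=d\Phi(z)(L)$. The main obstacle is that $K_z$ varies with the base point because $\Phi$ is nonlinear: the families defining the two sides involve directional derivatives of $\mathbf{f}_1$ taken in different directions and at points satisfying different differentiability conditions, and the union of the subspaces $K_z$ over a neighborhood of $\mathbf{x}_2$ need not lie in any fixed finite-dimensional subspace, so the matter cannot be reduced to finite dimensions. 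To handle this I would pass to support functions. Since $F$ is reflexive, bounded $\overline{\co}^{WOT}$-closed convex subsets of $\mathcal{L}(L,F)\cong F^{\dim L}$ are WOT-compact, so for each $v\in L$ and $y^*\in F^*$ the support function of the nested intersection equals the limsup of the relevant directional derivatives over a shrinking neighborhood. Writing $\mathbf{f}_1'(w;u)$ for the directional derivative and $u_0:=d\Phi(\mathbf{x}_2)(v)$, the desired inclusion reduces to
\[
\limsup_{z\to\mathbf{x}_2}\bigl\langle y^*,\,\mathbf{f}_1'(\Phi(z);\,d\Phi(z)v)\bigr\rangle\ \le\ \limsup_{w\to\mathbf{x}_1,\ w\in\Omega_K(\mathbf{f}_1)}\bigl\langle y^*,\,\mathbf{f}_1'(w;\,u_0)\bigr\rangle.
\]

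For this inequality I would exploit the Lipschitz bound in two ways. First, since $\mathbf{f}_1$ is $M$-Lipschitz, $\bigl|\langle y^*,\mathbf{f}_1'(w;u)-\mathbf{f}_1'(w;u')\rangle\bigr|\le M\|y^*\|\,|u-u'|$ whenever both directional derivatives exist, and $d\Phi(z)v\to u_0$ by continuity of $d\Phi$; so the direction may be replaced by the fixed $u_0$ up to an error tending to $0$. Second, $\mathbf{f}_1'(w;u_0)$ depends only on $\mathbf{f}_1$ along the line through $w$ in direction $u_0$, so it suffices, for each base point $w_n=\Phi(z_n)$ realizing the left-hand limsup, to locate nearby points of $K$-differentiability with comparable directional derivative. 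This I would obtain by a Fubini argument on the finite-dimensional slice $w_n+\bigl(K+\mathbb{R}\,d\Phi(z_n)v\bigr)$: the restriction of $\mathbf{f}_1$ to the slice is Lipschitz, hence Fr\'echet differentiable almost everywhere, so $K$-differentiability holds on a full-measure subset, and the fundamental theorem of calculus along $d\Phi(z_n)v$ produces such points with directional derivative arbitrarily close to $\mathbf{f}_1'(w_n;d\Phi(z_n)v)$. This gives the inclusion $\partial_L\mathbf{h}(\mathbf{x}_2)\subseteq\partial_K\mathbf{f}_1(\mathbf{x}_1)\circ[d\Phi(\mathbf{x}_2)|_L]$; applying it to the diffeomorphism $\Phi^{-1}$ yields the reverse inclusion, and together with the postcomposition step and the reduction above this completes the proof. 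The delicate Fubini and approximation step matching the two limsups over differentiability points is the part I expect to demand the most care.
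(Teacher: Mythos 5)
Your overall reduction is the same skeleton as the paper's proof: by symmetry one only needs the inclusion $d\Psi(\mathbf{y}_1)\circ\partial_{PZ}\mathbf{f}_1(\mathbf{x}_1)\circ d\Phi(\mathbf{x}_2)\subset\partial_{PZ}\mathbf{f}_2(\mathbf{x}_2)$, which in turn reduces to a statement relating $\partial_{K}\mathbf{f}_1(\mathbf{x}_1)$ and $\partial_L\mathbf{f}_2(\mathbf{x}_2)$ for each finite-dimensional $L\subset E$ with $K=d\Phi(\mathbf{x}_2)(L)$. You have also put your finger on the genuinely delicate point: $L$-Gâteaux differentiability of $\mathbf{h}=\mathbf{f}_1\circ\Phi$ at $z$ corresponds, via Lemma \ref{simple}, to differentiability of $\mathbf{f}_1$ along the \emph{moving} subspace $K_z=d\Phi(z)(L)$, whereas $\partial_K\mathbf{f}_1(\mathbf{x}_1)$ is built from points of differentiability along the \emph{fixed} $K$. (The paper's own proof is much shorter precisely because it applies Lemma \ref{simple} at the points $\mathbf{w}_{n_\alpha}=\Phi^{-1}(\mathbf{z}_{n_\alpha})$ with the fixed subspace $K$; the hypothesis your ``main obstacle'' concerns is exactly what that application requires, so your instinct to address it head-on is sound.) Your postcomposition step with $\Psi$ is fine.

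The gap is in the step meant to overcome the obstacle: the claim that the inclusion $\partial_L\mathbf{h}(\mathbf{x}_2)\subseteq\partial_K\mathbf{f}_1(\mathbf{x}_1)\circ\bigl[d\Phi(\mathbf{x}_2)|_L\bigr]$ ``reduces to'' a support-function inequality tested only on rank-one functionals $y^*\otimes v$. Inclusion of one WOT-compact convex subset of $\mathcal{L}(L,F)\cong F^{\dim L}$ in another is equivalent, by Hahn--Banach, to the ordering of support functions at \emph{every} WOT-continuous functional, and these are the finite sums $T\mapsto\sum_{i=1}^{k}\langle y_i^*,Tv_i\rangle$, not single tensors; for the sets at hand rank-one inequalities are strictly weaker. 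Concretely, let $f=(F_1,F_2):\mathbb{R}^2\to\mathbb{R}^2$ with $F_1(x,y)=\mathrm{sgn}(x)\max(|x|-|y|,0)$ and $F_2(x,y)=\mathrm{sgn}(y)\max(|y|-|x|,0)$. Its Clarke (= P\'ales--Zeidan) Jacobian at the origin is the convex hull of the matrices $\begin{pmatrix}1&\pm1\\0&0\end{pmatrix}$ and $\begin{pmatrix}0&0\\\pm1&1\end{pmatrix}$, on which the trace is identically $1$; the identity matrix satisfies every rank-one inequality $\langle y^*,Iv\rangle\le\sup_{S}\langle y^*,Sv\rangle$ over this hull (a short case check), yet it is excluded by the trace functional $e_1^*\otimes e_1+e_2^*\otimes e_2$. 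So verifying your displayed limsup inequality for all $(v,y^*)$ does not place $\partial_L\mathbf{h}(\mathbf{x}_2)$ inside the right-hand set. Moreover, your Fubini/FTC argument is intrinsically one-directional: for a functional $\sum_i y_i^*\otimes v_i$ with distinct $y_i^*$ one must produce a \emph{single} point of $K$-differentiability near $\mathbf{x}_1$ at which all $k$ terms are simultaneously nearly extremal, and integrating along a line in the one direction $d\Phi(z_n)v$ gives no such simultaneity (the example above is exactly of this type: each directional estimate holds separately while the sum fails). Hence the core inclusion remains unproven; repairing it requires an argument that controls all finite-rank functionals at once, e.g.\ one producing, from each $D_{K_{z_n}}\mathbf{f}_1(w_n)$, an operator provably in the convex hull $\overline{\mathrm{co}}^{\,WOT}\{D_K\mathbf{f}_1(w):w\in\Omega_K(\mathbf{f}_1),\ |w-\mathbf{x}_1|<\delta\}$ up to a norm-small error, rather than direction-by-direction estimates.
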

\begin{proof}
We keep the preceding  notation. Let $X$ and $Y$ be manifolds modeled on Banach spaces $E$ and $F$, respectively. Consider a locally Lipschitz map $f:X \to Y$ and let  $x\in X$.
Let $(W_1,\varphi_1)$  be a chart at $x$ and $(V_1,\psi_1)$ be a chart at $f(x)$  with $f(W_1)\subset V_1$. Denote  $\mathbf{f}_1 = \psi_1 \circ f\circ \varphi_1^{-1}:\varphi_1(W_1)\rightarrow \psi_1(V_1)$,  $\mathbf{x}_1=\varphi_1(x)$ and  $\mathbf{y}_1=\psi_1(f(x))$.

Now let $(W_2,\varphi_2)$  and $(V_2,\psi_2)$  also be charts around $x$ and $f(x)$ respectively,  such that $f(W_2)\subset V_2$, and denote $\mathbf{f}_2 = \psi_2 \circ f\circ \varphi_2^{-1}:\varphi_2(W_2)\rightarrow \psi_2(V_2)$,  $\mathbf{x}_2=\varphi_2(x)$ and $\mathbf{y}_2=\psi_2(f(x))$.

As before, we will also consider the following $C^1$-diffeomorphisms:
\begin{eqnarray*}
\Psi &=& \psi_2\circ\psi_1^{-1}:\psi_1(V_1\cap V_2)\rightarrow F\\
\Phi &=& \varphi_1\circ\varphi_2^{-1}: \varphi_2(W_1 \cap W_2)\rightarrow E
\end{eqnarray*}
In this way we have that
$$
\mathbf{f}_2 = \Psi \circ \mathbf{f}_1 \circ \Phi
$$

Now, if we denote
$$
\partial_{1} f(x):= d(\psi_1^{-1})(\psi_1 (f(x)) \circ \partial_{PZ}\mathbf{f}_1(\mathbf{x}_1) \circ d \varphi_1 (x), \text{\, \, \, and}
$$
$$
\partial_{2} f(x):= d(\psi_2^{-1})(\psi_2 (f(x)) \circ \partial_{PZ}\mathbf{f}_2(\mathbf{x}_2) \circ d \varphi_2 (x),
$$
we have to prove that $\partial_{1} f(x)= \partial_{2} f(x)$. By symmetry, it will be sufficient to prove that
$$
\partial_{1} f(x) \subset \partial_{2} f(x).
$$
This is equivalent to prove that
$$
B\circ \partial_{PZ}\mathbf{f}_1(\mathbf{x}_1) \circ A \subset \partial_{PZ}\mathbf{f}_2(\mathbf{x}_2),
$$
where $A:= d \Phi (\mathbf{x}_2)$ and $B:= d \Psi (\mathbf{y}_1)$. By the definition of P\'ales-Zeidan generalized gradient, this will be a direct consequence of the following claim:

\

\noindent {\bf Claim:} For each finite-dimensional linear subspace $L\subset E$, if we denote $K= A(L)$, we have that
$$
B\circ \partial_{K}\mathbf{f}_1(\mathbf{x}_1) \circ A \subset \partial_{L}\mathbf{f}_2(\mathbf{x}_2).
$$

In order to prove the Claim, we are going to use that characterization of generalized $L$-Jacobians given in  Lemma 3.2 of \cite{PZ-07}. According to it,  the generalized $K$-Jacobian of $\mathbf{f}_1$ at $\mathbf{x}_1$ is given by
$$
\partial_{K}\mathbf{f}_1(\mathbf{x}_1) = \overline{co}^{\tiny WOT}(\Delta_{K}\mathbf{f}_1(\mathbf{x}_1)),
$$
where
$$
\Delta_{K}\mathbf{f}_1(\mathbf{x}_1) = \{ T\in \text{{\tiny {\emph WOT}}-cluster} D_K \mathbf{f}_1(\mathbf{z}_n) \, : \, (\mathbf{z}_n)\in \Omega_K(\mathbf{f}_1), \, (\mathbf{z}_n)\to \mathbf{x}_1 \}.
$$

This means that an operator $T\in \mathcal L (K, F)$ belongs to $\Delta_{K}\mathbf{f}_1(\mathbf{x}_1)$ if, and only if, $T$ is the limit in the {\tiny {\emph WOT}}-topology of a {\it subnet} of the form $\{D_K \mathbf{f}_1(\mathbf{z}_{n_\alpha})\}$ where $(\mathbf{z}_n)$ is a sequence in $\Omega_K(\mathbf{f}_1)$ converging to $\mathbf{x}_1$.

Suppose this is the case. Then, if we denote $\mathbf{w}_{n}:= \Phi^{-1}(\mathbf{z}_{n})$, we have that $(\mathbf{w}_{n})$ converges to $\mathbf{x}_2$ and the net $A_{{n_\alpha}}:= d \Phi (\mathbf{w}_{n_\alpha})$ converges to $A$ in norm. In the same way, if we denote $\mathbf{u}_{n}:= \mathbf{f}_1(\mathbf{z}_{n})$,
we have that $(\mathbf{u}_{n})$ converges to $\mathbf{y}_1$ and the net $B_{{n_\alpha}}:= d \Phi (\mathbf{u}_{n_\alpha})$ converges to $B$ in norm. Furthermore, using  Lemma \ref{simple}, for each index $\alpha$ we have that $\mathbf{f}_2$ is G\^{a}teaux-L-differentiable at $\mathbf{w}_{n_\alpha}$ and
$$
B_{n_\alpha} \circ D_K \mathbf{f}_1(\mathbf{z}_{n_\alpha}) \circ A_{n_\alpha} = D_L \mathbf{f}_1(\mathbf{w}_{n_\alpha}).
$$
Since $\mathbf{f}_1$ is locally Lipschitz, we can also assume that the net $D_K \mathbf{f}_1(\mathbf{z}_{n_\alpha})$ is norm-bounded in $\mathcal L (K, F)$. On the other hand, from Proposition 2.2 in \cite{PZ-07} we know that the mapping $R\mapsto R\circ A$ is a linear isomorphism for the respective {\tiny {\emph WOT}}-topologies. In this way we obtain that the net $\{D_K \mathbf{f}_1(\mathbf{z}_{n_\alpha}) \circ A_{n_\alpha}\}$ converges to $T\circ A$ in the {\tiny {\emph WOT}}-topology. By the same reasoning, we see that the net
$$
\{B_{n_\alpha} \circ D_K \mathbf{f}_1(\mathbf{z}_{n_\alpha}) \circ A_{n_\alpha}\}
$$
converges to $B\circ T \circ A$ in the {\tiny {\emph WOT}}-topology. This gives that  $\Delta_{K}\mathbf{f}_1(\mathbf{x}_1)\subset \partial_{L}\mathbf{f}_2(\mathbf{x}_2)$. By the very definition, the latter is a {\tiny {\emph WOT}}-closed and convex set, so this establishes the Claim.

\end{proof}

\subsection*{Finsler metrics \cite[p. 116]{palais}} Let $X$ be a $C^1$ manifold modeled in a  Banach space $(E,|\cdot|_E)$. As usual, $TX=\{(x,v):x\in X\mbox{ and } v\in T_xX\}$ will denote the tangent bundle of $X$.  If $(W,\varphi)$ is a chart of $X$, then there is a local trivialization of the natural projection $\pi:TX\rightarrow X$ over $W$, namely a  bijection $TW=\pi^{-1}(W)\rightarrow W\times E$ which commutes  with the projection on $W$. For every $\mathbf{v}\in E$ and $x\in W$, $d\varphi^{-1}(\varphi(x))\mathbf{v}\in T_xX$ is the tangent vector at $x$ represented by $\mathbf{v}$ in the chart $(W,\varphi)$.  A {\it Finsler structure} on $TX$ is a continuous map $\|\cdot\|:TX\rightarrow[0,\infty)$ such that:

\begin{enumerate}
\item For every $x\in X$, the map $\|\cdot\|_x:= \|\cdot\|_{|_{T_xX}}:T_xX\rightarrow[0,\infty)$ is a norm on $T_xX$ such that, for  every chart $(W,\varphi)$ at $x$,  the map $\|d\varphi^{-1}(\varphi(u))(\cdot)\|_x$ is a norm on $E$ equivalent to $|\cdot|_E$.
\item Given $x_0\in X$,  a chart $(W,\varphi)$ of $X$ at $x_0$ and $\varepsilon> 0$ there exists an open neighborhood $U_{x_0}\subset W$  such that for every $x\in U_{x_0}$ and every $\mathbf{v}\in E$:
$$\frac{1}{(1+\varepsilon)} \|d\varphi^{-1}(\varphi(x_0))(\mathbf{v})\|_{x_0}\leq \|d\varphi^{-1}(\varphi(x))(\mathbf{v})\|_x\leq (1+\varepsilon)\|d\varphi^{-1}(\varphi(x_0))(\mathbf{v})\|_{x_0}.$$
\end{enumerate}
A {\it Finsler manifold} is a $C^1$-smooth Banach manifold endowed with a Finsler structure on its tangent bundle.

\

Let $X$ be a $C^1$  Finsler manifold. Recall that the {\it length} of a  $C^1$-smooth path $\sigma:[a,b]\rightarrow X$ is defined as
$$\ell(\sigma)=\int_a^b\|\dot \sigma(t)\|_{\sigma(t)} dt.$$
If $X$ is connected, then it is connected by $C^1$-smooth paths and we can define the associated {\it Finsler metric}:
$$d_X(u,u')=\inf\{\ell(\sigma):\sigma \mbox{ is a  $C^1$-smooth path connecting $u$ to $u'$} \}.$$
The Finsler metric is consistent with the topology given in $X$ and the manifold is said to be {\it complete} if it is a complete metric space with respect to the distance $d_X$. {\it From now on we will assume that all Finsler manifolds are connected}.

\

The next lemma shows that the Finsler distance can be locally approximated by the norm-distance associated to a given chart on the model space.

\begin{lemma}\label{lemaequivalencia1}
Let $X$  be a  $C^1$ Finsler manifold modeled on a Banach space $E$. Given $x\in X$,  a chart $(W,\varphi)$ of $X$ at $x$  and $\varepsilon>0$ there is  an open neighborhood $U_x\subset W$ such that:
\begin{equation}\label{metricasequivalentes}
\frac{1}{(1+\varepsilon)}\|\mathbf{u}-\mathbf{u}'\|_{x,\varphi} \leq d_X(u,u')\leq (1+\varepsilon) \|\mathbf{u}-\mathbf{u}'\|_{x,\varphi}
\end{equation}
where $\mathbf{u}=\varphi(u)$,
$\mathbf{u}'=\varphi(u')$ for $u,u'\in U_x$ and  $\|\cdot\|_{x,\varphi} := \|d\varphi^{-1}(\varphi(x))(\cdot)\|_x$.
\end{lemma}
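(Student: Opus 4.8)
The plan is to combine the second axiom in the definition of a Finsler structure---which compares the Finsler norm $\|\cdot\|_z$ at points $z$ near $x$ with the frozen norm $\|\cdot\|_{x,\varphi}$---with the elementary normed-space fact that the length of a $C^1$ path is never smaller than the distance between its endpoints. Applying that axiom to $x_0=x$ and the given $\varepsilon$, I first fix an open set $V\subset W$ with
\[
\frac{1}{1+\varepsilon}\,\|\mathbf{v}\|_{x,\varphi}\le \|d\varphi^{-1}(\varphi(z))(\mathbf{v})\|_z\le (1+\varepsilon)\,\|\mathbf{v}\|_{x,\varphi}
\]
for every $z\in V$ and $\mathbf{v}\in E$. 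I then choose $\rho>0$ so small that the closed $\|\cdot\|_{x,\varphi}$-ball of radius $\rho$ about $\mathbf{x}=\varphi(x)$ lies in $\varphi(V)$, and I set
\[
U_x:=\varphi^{-1}\bigl(\{\mathbf{z}\in E:\ \|\mathbf{z}-\mathbf{x}\|_{x,\varphi}<\rho/3\}\bigr).
\]
For $u,u'\in U_x$ I write $\mathbf{u}=\varphi(u)$, $\mathbf{u}'=\varphi(u')$, so that $\|\mathbf{u}-\mathbf{u}'\|_{x,\varphi}<2\rho/3$. The deliberate three-to-one gap between the radius $\rho/3$ of $U_x$ and the radius $\rho$ on which the comparison is available is what makes the lower bound go through.

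The upper bound is the easy half: I would test $d_X(u,u')$ against the chart segment $\boldsymbol\gamma(t)=(1-t)\mathbf{u}+t\mathbf{u}'$, $t\in[0,1]$. Convexity of the ball keeps $\boldsymbol\gamma$ inside $\varphi(V)$, so $\gamma:=\varphi^{-1}\circ\boldsymbol\gamma$ is an admissible $C^1$ path from $u$ to $u'$; since $\dot\gamma(t)=d\varphi^{-1}(\boldsymbol\gamma(t))(\mathbf{u}'-\mathbf{u})$, the right-hand comparison inequality gives $\|\dot\gamma(t)\|_{\gamma(t)}\le(1+\varepsilon)\|\mathbf{u}'-\mathbf{u}\|_{x,\varphi}$, and integrating over $[0,1]$ yields $d_X(u,u')\le\ell(\gamma)\le(1+\varepsilon)\|\mathbf{u}-\mathbf{u}'\|_{x,\varphi}$.

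The lower bound is the crux, and the genuine obstacle is that $d_X$ is an infimum over \emph{all} admissible paths, including those that wander far from $x$ where the metric is uncontrolled. I would argue by dichotomy on an arbitrary $C^1$ path $\sigma:[a,b]\to X$ joining $u$ to $u'$. If $\sigma$ never leaves the larger ball $\varphi^{-1}(\{\|\mathbf{z}-\mathbf{x}\|_{x,\varphi}<\rho\})$, then $\boldsymbol\sigma:=\varphi\circ\sigma$ stays in $V$, and the left-hand comparison inequality applied pointwise gives
\[
\ell(\sigma)=\int_a^b\|d\varphi^{-1}(\boldsymbol\sigma(t))(\dot{\boldsymbol\sigma}(t))\|_{\sigma(t)}\,dt\ge\frac{1}{1+\varepsilon}\int_a^b\|\dot{\boldsymbol\sigma}(t)\|_{x,\varphi}\,dt\ge\frac{1}{1+\varepsilon}\|\mathbf{u}-\mathbf{u}'\|_{x,\varphi},
\]
the last step being the normed-space estimate $\int_a^b\|\dot{\boldsymbol\sigma}\|_{x,\varphi}\,dt\ge\|\boldsymbol\sigma(b)-\boldsymbol\sigma(a)\|_{x,\varphi}$.

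It remains to dispose of the escaping paths, which is exactly where the radius gap pays off. If $\sigma$ does leave the ball of radius $\rho$, I let $t_0$ be its first exit time; then $\sigma|_{[a,t_0]}$ stays in the closed ball (hence in $V$) and reaches the sphere $\|\mathbf{z}-\mathbf{x}\|_{x,\varphi}=\rho$, so the interior estimate applied to this sub-path gives
\[
\ell(\sigma)\ge\ell(\sigma|_{[a,t_0]})\ge\frac{1}{1+\varepsilon}\|\varphi(\sigma(t_0))-\mathbf{u}\|_{x,\varphi}\ge\frac{1}{1+\varepsilon}\bigl(\rho-\|\mathbf{u}-\mathbf{x}\|_{x,\varphi}\bigr)\ge\frac{1}{1+\varepsilon}\cdot\frac{2\rho}{3}\ge\frac{1}{1+\varepsilon}\|\mathbf{u}-\mathbf{u}'\|_{x,\varphi},
\]
where I used $\|\mathbf{u}-\mathbf{x}\|_{x,\varphi}<\rho/3$ and $\|\mathbf{u}-\mathbf{u}'\|_{x,\varphi}<2\rho/3$. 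In either case $\ell(\sigma)\ge\frac{1}{1+\varepsilon}\|\mathbf{u}-\mathbf{u}'\|_{x,\varphi}$, and taking the infimum over $\sigma$ yields the left inequality of \eqref{metricasequivalentes}. The only subtle point is the existence and boundary behaviour of $t_0$, namely that $\sigma(t_0)$ lands on the sphere of radius $\rho$; this follows from continuity of $\sigma$ and of $z\mapsto\|\varphi(z)-\mathbf{x}\|_{x,\varphi}$ on $\overline{U_x}\subset W$, so no real difficulty arises there.
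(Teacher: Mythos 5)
Your proof follows the same core strategy as the paper's: axiom (2) of the Finsler structure applied at $x_0=x$ gives the two-sided comparison of $\|d\varphi^{-1}(\varphi(z))(\cdot)\|_z$ with $\|\cdot\|_{x,\varphi}$ near $x$; the upper bound in \eqref{metricasequivalentes} comes from pulling back the straight chart segment (using convexity of the chart image), and the lower bound from the pointwise estimate $\|\dot\sigma(t)\|_{\sigma(t)}\geq \tfrac{1}{1+\varepsilon}\|\dot{\mathbf{s}}(t)\|_{x,\varphi}$ together with the normed-space inequality $\int\|\dot{\mathbf{s}}(t)\|_{x,\varphi}\,dt \geq \|\mathbf{s}(1)-\mathbf{s}(0)\|_{x,\varphi}$, where $\mathbf{s}=\varphi\circ\sigma$. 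The genuine difference is your treatment of the infimum defining $d_X$: the paper's proof of the left inequality only considers competitor paths $\sigma:[0,1]\to U_x$, i.e.\ paths that never leave $U_x$; as written, this bounds the intrinsic metric of the neighborhood $U_x$ from below, not $d_X(u,u')$, whose infimum also ranges over paths that exit $U_x$. Your radius-gap dichotomy ($U_x$ of radius $\rho/3$, comparison valid up to radius $\rho$, first-exit estimate for escaping paths) supplies exactly the missing argument, so on this point your proof is more complete than the paper's own.

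That said, the step you dismiss at the end (``no real difficulty arises there'') is a real difficulty in the infinite-dimensional setting of this paper, and your justification of it is not valid. You claim $\overline{U_x}\subset W$ and that the first exit point lands on the sphere. But in an infinite-dimensional Banach manifold, $\varphi^{-1}(\overline{B}_E(\mathbf{x};\rho))$ is closed in $W$ yet need not be closed in $X$ (closed balls are not compact), so a path starting in $U_x$ can converge to a point of $X\setminus W$ while its chart image remains inside $B_E(\mathbf{x};\rho)$ and never approaches the sphere. In that case your first exit time $t_0$ from the open set $\varphi^{-1}(B_E(\mathbf{x};\rho))$ satisfies $\sigma(t_0)\notin W$, and $\varphi(\sigma(t_0))$ is simply undefined. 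The repair uses completeness of $E$: for such a path either $\int_a^{t_0}\|\dot{\mathbf{s}}(t)\|_{x,\varphi}\,dt=\infty$, in which case the desired lower bound on $\ell(\sigma)$ holds trivially, or this integral is finite, in which case $\mathbf{s}(t)$ is Cauchy as $t\to t_0^-$ and converges to some $\mathbf{q}\in \overline{B}_E(\mathbf{x};\rho)\subset\varphi(V)$; then $\sigma(t)=\varphi^{-1}(\mathbf{s}(t))\to\varphi^{-1}(\mathbf{q})\in W$, and Hausdorffness of $X$ forces $\sigma(t_0)=\varphi^{-1}(\mathbf{q})\in W$, contradicting $\sigma(t_0)\notin W$. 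Inserting this case analysis makes your escape argument, and hence the whole proof, airtight; in finite dimensions your original claim is fine, since there one can choose $\rho$ so that $\varphi^{-1}(\overline{B}_E(\mathbf{x};\rho))$ is compact, hence closed in $X$. Note that the paper's own proof is silent on this entire issue.
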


\begin{proof}
Let $x\in X$ and  $\varepsilon>0$, and a chart $(W,\varphi)$ of $X$ at $x$. Then there is an open and connected neighborhood $U_x$  such that for every $u\in U_x$ and every $\mathbf{v}\in E$:
$$\frac{1}{(1+\varepsilon)}\|\mathbf{v}\|_{x,\varphi}\leq \|\mathbf{v}\|_{u,\varphi}\leq (1+\varepsilon)\|\mathbf{v}\|_{x,\varphi}.$$
Without loss of generality, we can also assume that $\varphi(U_x)$ is a convex open set in $E$. Let $u$ and $u'$ arbitrary points in $U_x$, and let $\sigma:[0,1]\rightarrow U_x$  be a $C^1$-smoth path joining $u$ and $u'$. If $\mathbf{s}=\varphi\circ\sigma$, for each $t\in(0,1)$ we have that
$\dot\sigma(t)
	=  d\varphi^{-1}(\varphi(\sigma(t)))\mathbf{\dot s}(t)$. Then
$$\ell(\sigma)=\int_0^1\|\dot \sigma(t)\|_{\sigma(t)} dt
	= \int_0^1 \|\mathbf{\dot s}(t)\|_{{\sigma(t)},\varphi}dt
	\geq\frac{1}{(1+\varepsilon)}\int_0^1 \|\mathbf{\dot s}(t)\|_{x,\varphi} dt$$
$$\geq\frac{1}{(1+\varepsilon)} \Bigg\|\int_0^1\mathbf{\dot s}(t)dt\Bigg\|_{x,\varphi}
    \geq\frac{1}{(1+\varepsilon)}\|\mathbf{s}(1)-\mathbf{s}(0)\|_{x,\varphi}$$
Therefore $d_X(u,u')\geq\frac{1}{(1+\varepsilon)}\|\mathbf{u}-\mathbf{u'}\|_{x,\varphi}.$
Now, consider the path $\mathbf{s}(t)=t\varphi(u)+(1-t)\varphi(u')$ in $\varphi(U_x)$ and $\sigma(t)=\varphi^{-1}(\mathbf{s}(t))$. Then:
$$\ell(\sigma)
	= \int_0^1\|\dot \sigma(t)\|_{\sigma(t)} dt
	= \int_0^1 \|\mathbf{\dot s}(t)\|_{\sigma(t),\varphi}dt
	\leq {(1+\varepsilon)}\int_0^1 \|\mathbf{\dot s}(t)\|_{x,\varphi} dt$$
$$    = {(1+\varepsilon)}\int_0^1 \|\varphi(u)-\varphi(u')\|_{x,\varphi} dt.$$
Then $d_X(u,u')\leq\ell(\sigma)\leq {(1+\varepsilon)} \|\mathbf{u}-\mathbf{u'}\|_{x,\varphi}.$
\end{proof}

As a consequence, we see that the Finsler distance is locally comparable with the distance in the model space given by the original norm.

\begin{lemma}\label{lemaequivalencia2}
Let $X$  be a  $C^1$ Finsler manifold modeled in a Banach space $(E,|\cdot|_E)$. Given $x\in X$,  a chart $(W,\varphi)$ of $X$ at $x$  and $\varepsilon>0$ there is  an open neighborhood $U_x\subset W$  and $m>0$ such that such that for every $u,u'\in U_x$:
\begin{equation}\label{normasequivalentes}
\frac{1}{m(1+\varepsilon)}|\varphi(u)-\varphi(u')|_E \leq d_X(u,u')\leq m(1+\varepsilon) |\varphi(u)-\varphi(u')|_E.
\end{equation}
\end{lemma}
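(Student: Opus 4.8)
The plan is to combine the previous Lemma \ref{lemaequivalencia1}, which compares the Finsler distance $d_X$ with the chart-dependent norm $\|\cdot\|_{x,\varphi}$, with the fact that $\|\cdot\|_{x,\varphi}$ is itself equivalent to the model norm $|\cdot|_E$. The point is that Lemma \ref{lemaequivalencia1} already does all the genuine geometric work (controlling the Finsler length against a fixed norm on $E$); what remains is the purely linear-algebraic comparison of two equivalent norms on the Banach space $E$.

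First I would recall that, by property (1) in the definition of a Finsler structure, for the fixed point $x$ and chart $(W,\varphi)$ the map $\|\cdot\|_{x,\varphi} = \|d\varphi^{-1}(\varphi(x))(\cdot)\|_x$ is a norm on $E$ equivalent to $|\cdot|_E$. Hence there exists a constant $m > 0$ such that
$$
\frac{1}{m}\,|\mathbf{v}|_E \leq \|\mathbf{v}\|_{x,\varphi} \leq m\,|\mathbf{v}|_E
\qquad \text{for every } \mathbf{v}\in E.
$$
Note this comparison holds on all of $E$ and depends only on $x$ and $\varphi$, not on the neighborhood to be chosen.

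Next I would invoke Lemma \ref{lemaequivalencia1} to produce, for the given $x$, chart $(W,\varphi)$ and $\varepsilon > 0$, an open neighborhood $U_x \subset W$ satisfying \eqref{metricasequivalentes}. Taking $\mathbf{v} = \varphi(u) - \varphi(u')$ for $u, u' \in U_x$ and chaining the two pairs of inequalities yields
$$
d_X(u,u') \leq (1+\varepsilon)\,\|\varphi(u)-\varphi(u')\|_{x,\varphi} \leq m(1+\varepsilon)\,|\varphi(u)-\varphi(u')|_E
$$
and, on the other side,
$$
d_X(u,u') \geq \frac{1}{(1+\varepsilon)}\,\|\varphi(u)-\varphi(u')\|_{x,\varphi} \geq \frac{1}{m(1+\varepsilon)}\,|\varphi(u)-\varphi(u')|_E,
$$
which is exactly \eqref{normasequivalentes}. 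There is essentially no obstacle here: the only mild subtlety is bookkeeping, namely making sure the constant $m$ is chosen \emph{before} shrinking to $U_x$ so that a single $m$ works symmetrically on both inequalities, and observing that the neighborhood $U_x$ delivered by Lemma \ref{lemaequivalencia1} can be reused unchanged. The whole argument is a two-line concatenation of norm equivalences, so I would keep the write-up very short.
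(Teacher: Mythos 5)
Your proposal is correct and follows exactly the paper's own argument: apply Lemma \ref{lemaequivalencia1} to get the neighborhood $U_x$, use the equivalence of $\|\cdot\|_{x,\varphi}$ with $|\cdot|_E$ (property (1) of the Finsler structure) to get the constant $m$, and chain the inequalities. Nothing to add.
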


\begin{proof}
By Lemma \ref{lemaequivalencia1}, for every $x\in X$,  a chart $(W,\varphi)$ of $X$ at $x$, and $\varepsilon>0$ there is an open neighborhood $U_x\subset W$ such that \eqref{metricasequivalentes} holds or every $u,u'\in U_x$. Since  $\|d\varphi^{-1}(\varphi(x))(\cdot)\|_x$ is a norm on $E$ equivalent to $|\cdot|_E$, there exists $m>0$ such that $\frac{1}{m}|\mathbf{v}|_E\leq \|d\varphi^{-1}(\varphi(x))\mathbf{v}\|_x\leq m |\mathbf{v}|_E$ for every $\mathbf{v}\in E$. Then the result follows.
\end{proof}

This gives that, for Finsler manifolds, the notion of local Lipschitz continuity previously given in the 1st definition coincides with the usual notion of local Lipschitz continuity for the Finsler metric.

\subsection*{Locally Lipschitz continuous maps: 2nd definition}
Let $X$ and $Y$  be two $C^1$ Finsler manifolds. We say that a map $f:X\rightarrow Y$ is locally Lipschitz  at $x\in X$ if
there exists an open set $W$ at $x$ and $\kappa>0$ such that for all $u,u'\in W$:
\begin{equation}\label{lipcondition2}
d_Y(f(u),f(u'))\leq \kappa d_X(u,u').
\end{equation}

\begin{remark}
Both definitions of locally Lipschitz continuous map are equivalent for mappings between Finsler manifolds. {\rm Indeed, let $X$ and $Y$  be two $C^1$ Finsler manifolds modeled on $E$ and $F$, respectively, and let $f:X\rightarrow Y$ be locally Lipschitz  at $x\in X$ according to the 1st definition. Then
there are charts $(W,\varphi)$ at $x$, $(V,\psi)$ at $f(x)$ such that $f(W)\subset V$ and the map $\mathbf{f} = \psi \circ f \circ \varphi^{-1}:\varphi(W)\rightarrow \psi(V)$ is Lipschitz  on $\varphi(W)$, namely for all $\mathbf{u},\mathbf{u}'\in\varphi(W)$ and some $\kappa>0$ \eqref{lipcondition} holds.  By Lemma \ref{lemaequivalencia2}, there is an open neighborhood $U_x\subset W$ such that, if $u,u'\in U_x$, $\mathbf{u}=\varphi(u)$ and $\mathbf{u}'=\varphi(u')$, without loss generality we can deduce that  for some $m>0$ and $n>0$:
$$ d_Y(f(u),f(u')) \leq \kappa mn(1+\varepsilon)^2 d_X(u,u')$$
Therefore $f$ is locally Lipschitz at $x$ according to 2nd definition. The converse implication is analogous.
 }
\end{remark}


\section{\bfseries\sffamily\large Metric regularity on Finsler manifolds}

Let $X$ and $Y$  be two $C^1$-smooth Finsler manifolds modeled on Banach spaces. A map  $f:X\rightarrow Y$  is said to be  {\it metrically regular} around $x$ with modulus $\mu>0$ if there exist neighborhoods $W_{x}$ and $V_{f(x)}$  such that
\begin{equation}\label{metricregularity}
d_X(u,f^{-1}(y))\leq \mu~ d_Y(y,f(u))
\end{equation}
for all $u\in W_{x}$ and  $y\in V_{f(x)}$. The infimum of such moduli $\mu$ is called {\it rate of metric regularity} and, as usual, will be denoted by {$\reg f(x)$}. If no such neighborhoods and modulus exist we set $\reg f(x)=\infty$.  As Ioffe points out in \cite{ioffe2015}: ``the very fact  that $f$ is regular near certain point is independent of the choice of specific metrics. Thus, although the definitions explicitly use metrics the regularity is a topological property''.

It is well known (see \cite{ioffe2015}) that $f$ is metrically regular at $x$ if and only if  $f$ is {\it open with linear rate around}  $x$, namely if there exist a neighborhood $W_{x}$ and a constant $\alpha>0$ such that for every $u\in W_{x}$ and $r>0$ with $B_X(u;r)\subset W_{x}$:
\begin{equation}
\label{graves}B_{Y}(f(u);\alpha r)\subset f(B_{X}(u;r))
\end{equation}
The supremum of all  $\alpha>0$ such that for some neighborhood $W_{x}$  \eqref{graves} holds $\mbox{ for all } u\in W_{x} \mbox{ and all } r>0 \mbox{ with } B_X(u;r)\subset W_{x}$ is the {\it rate of surjection} of $f$ near $x$ denoted by {$\cov f(x)$}.

A  mapping $f$ is open with linear rate around $x$ if and only if the set valued  mapping $f^{-1}:Y\to 2^{X}$,  defined by $f^{-1}(y)=\{x\in X: y=f(x)\}$ for $y\in Y$,  has the so-called  pseudo-Lipschitz property  near $(f(x),x)$. Recall, a set-valued mapping $g:Y\to 2^{X}$  has the {\it pseudo-Lipchitz property} around $(y,x)$ if there exist neighborhoods $V_{y}$ and $W_{x}$ and a number $\mu>0$ such that
$d_X(u,g(z))\leq \mu d_Y(w,z)$ provided $w,z\in V_{y}$, $u\in W_{x}$ and $u\in g(w)$. The infimum of such $\mu$ ---denoted by {$\lip g(y|x)$}--- is called the {\it Lipschitz rate} of $g$ near $(y,x)$. If no such neighborhoods and modulus exist we set $\lip g(y|x)=\infty$.

In summary, as can be seen in  \cite{ioffe2015}, we have that $f$ is metrically regular  around $x$ if and only if it is  open with linear rate around $x$ if and only if  $f^{-1}$  has the pseudo-Lipschitz  property  near $(f(x),x)$. Moreover, under the convention
$0\cdot\infty =1$,
 \begin{equation}\label{igualdadcovlip}
 \cov f(x)\cdot \reg f(x) = 1 \hspace{1cm} \reg f(x)=\lip f^{-1}(f(x)|x).
 \end{equation}

\

\begin{remark} \label{equalindex}
{\bf[Linear  case] }{\rm
 Let $\mathbf{T}\in \mathcal{L}(E,F)$ be a continuous linear operator between Banach spaces. Then the following statements are equivalent:
 \begin{enumerate}
\item[(a)]  $\mathbf{T}$  is onto
\item[(b)]  The  {\it Banach constant of $\mathbf{T}$}  is positive i.e. $$C(\mathbf{T}):=\inf_{|\mathbf{y}^*|=1}|\mathbf{T}^*{\mathbf{y}^*}|>0$$
\item[(c)]  The index $I(\mathbf{T}):=\sup_{|\mathbf{y}|=1}\left\{\inf_{\mathbf{T}\mathbf{x}		=\mathbf{y}}|\mathbf{x}|\right\}<\infty.$
\end{enumerate}
Of course,  for all $\mathbf{x}\in E$, $\cov \mathbf{T}(\mathbf{x})=C(\mathbf{T})$ and
$\reg \mathbf{T}(\mathbf{x}) = I(\mathbf{T})$. Furthermore, it is easily seen that $\mathbf{T}$ is one to one and $\mathbf{T}(E)$ is closed, if and only if,
the {\it dual Banach constant of $\mathbf{T}$} is positive i.e.  $$C^*(\mathbf{T}):=\inf_{|\mathbf{u}|=1}|\mathbf{T}{\mathbf{u}}|>0.$$
Furthermore if $\mathbf{T}$ is a linear isomorphism  then:
\begin{equation}\label{igualdadisomorphism}
  C^*(\mathbf{T})=C(\mathbf{T})=\|\mathbf{T}^{-1}\|^{-1}
\end{equation}}
\end{remark}

\

\begin{corollary}\label{regularitycharts}
Let $X$ be a $C^1$ Finsler manifold modeled on a Banach space $E$. Let  $x\in X$ and $(W,\varphi)$ be a chart at $x$. Then $\varphi: W \to E$ is metrically regular  at $x$ and $\varphi^{-1}: \varphi (W) \to X$ is metrically regular at $\mathbf{x}=\varphi(x)$. Moreover, with the equivalent norm $\|\cdot\|_{x,\varphi}:=\|d\varphi^{-1}(\varphi(x)(\cdot))\|_x$ on $E$, we have:
\begin{center}
$\cov \varphi(x)= 1 =\cov\varphi^{-1}(\mathbf{x}).$
 \end{center}
\end{corollary}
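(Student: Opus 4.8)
The plan is to exploit the two-sided local comparison between the Finsler distance $d_X$ and the norm distance induced by $\|\cdot\|_{x,\varphi}$ furnished by Lemma \ref{lemaequivalencia1}, together with the fact that a chart is a homeomorphism, so that for $y\in\varphi(W)$ the preimage $\varphi^{-1}(y)$ is a single point and the quantity $d_X(u,\varphi^{-1}(y))$ appearing in \eqref{metricregularity} is simply the distance from $u$ to that point. I would first fix $\varepsilon>0$ and invoke Lemma \ref{lemaequivalencia1} to obtain a neighborhood $U_x\subset W$ on which $\tfrac{1}{1+\varepsilon}\|\varphi(u)-\varphi(u')\|_{x,\varphi}\le d_X(u,u')\le(1+\varepsilon)\|\varphi(u)-\varphi(u')\|_{x,\varphi}$ for all $u,u'\in U_x$. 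Throughout, $E$ is equipped with the equivalent norm $\|\cdot\|_{x,\varphi}$, which is exactly what makes the comparison constants collapse to $1$ in the limit; had one instead used the original norm and Lemma \ref{lemaequivalencia2}, the equivalence constant $m$ would spoil the precise value.

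Next I would establish that $\reg\varphi(x)=1$. For the upper bound, take $W_x=U_x$ and $V_{f(x)}=\varphi(U_x)$, substitute $u'=\varphi^{-1}(y)$ into the right-hand inequality, and read off $d_X(u,\varphi^{-1}(y))\le(1+\varepsilon)\,\|\varphi(u)-y\|_{x,\varphi}$, which is exactly \eqref{metricregularity} with modulus $\mu=1+\varepsilon$; in particular $\varphi$ is metrically regular at $x$, and letting $\varepsilon\to0$ gives $\reg\varphi(x)\le1$. For the lower bound, the left-hand inequality gives $d_X(u,\varphi^{-1}(y))\ge\tfrac{1}{1+\varepsilon}\,\|\varphi(u)-y\|_{x,\varphi}$, so any admissible regularity modulus $\mu$ must satisfy $\mu\ge\tfrac{1}{1+\varepsilon}$ (choosing $y\ne\varphi(u)$ in the common neighborhood), and since $\varepsilon$ is arbitrary this forces $\reg\varphi(x)\ge1$. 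Hence $\reg\varphi(x)=1$.

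I would then run the symmetric argument for $\varphi^{-1}:\varphi(W)\to X$ at $\mathbf{x}=\varphi(x)$, interchanging the roles of $X$ and $(E,\|\cdot\|_{x,\varphi})$. Here $(\varphi^{-1})^{-1}(v)=\{\varphi(v)\}$ is again a singleton, and the same two inequalities, now read with $\mathbf{u}=\varphi(u)$, yield $\reg\varphi^{-1}(\mathbf{x})=1$ by the identical upper/lower bound reasoning. Finally, applying the relation $\cov f(x)\cdot\reg f(x)=1$ from \eqref{igualdadcovlip} to $f=\varphi$ and to $f=\varphi^{-1}$ converts these equalities into $\cov\varphi(x)=1=\cov\varphi^{-1}(\mathbf{x})$, as claimed.

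The main obstacle is bookkeeping rather than conceptual depth. One must keep the neighborhoods used for metric regularity consistent with the neighborhood $U_x$ produced by Lemma \ref{lemaequivalencia1}, in particular intersecting $W_x$, $\varphi^{-1}(V_{f(x)})$ and $U_x$ when verifying the lower bound, where the candidate modulus $\mu$ and its neighborhoods are fixed \emph{before} $\varepsilon$ is chosen. One must also be careful that the distance on the model space is measured with $\|\cdot\|_{x,\varphi}$, so that the factors $(1+\varepsilon)^{\pm1}$ are the only distortion and the surjection rate comes out exactly equal to $1$ in the limit.
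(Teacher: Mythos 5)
Your proof is correct and follows essentially the same route as the paper's: both arguments rest entirely on the bi-Lipschitz estimate of Lemma \ref{lemaequivalencia1} (with the norm $\|\cdot\|_{x,\varphi}$, precisely so that the constants collapse to $1$) and conclude via the identities \eqref{igualdadcovlip}. The only difference is cosmetic: the paper computes the local Lipschitz constants of $\varphi$ and $\varphi^{-1}$, each equal to $1$, and identifies them with the pseudo-Lipschitz rates of the (single-valued) inverses, hence with $\reg$, whereas you verify the metric-regularity inequality \eqref{metricregularity} directly to obtain $\reg \varphi(x)=\reg\varphi^{-1}(\mathbf{x})=1$ --- including the correct quantifier order in the lower bound, with $\mu$ and its neighborhoods fixed before $\varepsilon$ --- and then invoke $\cov f(x)\cdot\reg f(x)=1$.
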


\begin{proof}
Let $\varepsilon>0$. By Lemma  \ref{lemaequivalencia1},  there is  an open neighborhood $U_x\subset W$  such that such that for every $u,u'\in U_x$:
\begin{equation}\label{ecuaciontemporal}
\frac{1}{1+\varepsilon}\|\varphi(u)-\varphi(u')\|_{x,\varphi} \leq d_X(u,u')\leq (1+\varepsilon) \|\varphi(u)-\varphi(u')\|_{x,\varphi},
\end{equation}
where $\|\cdot\|_{x,\varphi}$ is the equivalent norm  on $E$  given in Lemma \ref{lemaequivalencia1}. This means that $ \varphi: U_x \to \varphi (U_x)$ is $(1+\varepsilon)$-bi-Lipschitz. Therefore, if we consider the local Lipschitz constant of $\varphi$ at $x$:
$$
{{\rm lip~} \varphi(x)}:=\inf_{R>0}\sup\left\{\frac{\|\varphi (u) -\varphi (u')\|_{x,\varphi}}
{d_X(u, u')} \, : \, u, u'\in B_X(x;R) \mbox{ and } u\neq  u'\right\},
$$
we obtain that
$$
\frac{1}{1+\varepsilon} \leq {{\rm lip~} \varphi(x)} \leq 1+\varepsilon.
$$
As a consequence, we have that ${{\rm lip~} \varphi(x)}=1$. In the same way, if we denote $\mathbf{x}=\varphi(x)$, we also have that
${\rm lip~} \varphi^{-1} (\mathbf{x})=1$.

Since $\varphi: W \to \varphi(W)$ is a single-valued mapping, then it has the pseudo-Lipschitz property near the point $(x, \varphi(x))$  if and only if it is locally Lipschitz continuous  at $x$, and furthermore
\begin{equation}\label{igualdadmonovaluadalip}
{\lip \varphi(x) = \lip \varphi(x|\varphi (x))}.
\end{equation}
The same holds for $\varphi^{-1}: \varphi (W)\to W$, and in this case we have that
\begin{equation}\label{igualdadmonovaluadalip}
{\lip \varphi^{-1}(\mathbf{x})=\lip \varphi^{-1}(\mathbf{x}|\varphi^{-1}(\mathbf{x}))}.
\end{equation}
Therefore, an application of \eqref{igualdadcovlip} gives that $\cov \varphi(x)= 1 =\cov\varphi^{-1}(\mathbf{x})$.
\end{proof}

\begin{definition}{\bf [Finsler regularity index]} {\rm
Let $X$ and $Y$  be two $C^1$-smooth Finsler manifolds modeled on Banach spaces $E$ and $F$. For every  $x\in X$ and every linear operator $T\in \mathcal{L}(T_xX,T_{f(x)}Y)$, consider its Banach constant with respect to the dual norms $|\cdot |_x$ on $(T_xX)^*$ and $|\cdot |_{f(x)}$ on $(T_{f(x)}Y)^*$, respectively  given by the Finsler norms on $T_xX$ and $T_{f(x)}Y$:
$$C_{\tiny\rm Finsler}(T)=\inf_{|y^*|_{f(x)}=1}|T^*y^*|_x$$
Now let $f:X \to Y$ be a locally Lipschitz map, and let $Jf$ be a pseudo-Jacobian mapping for $f$. We define the {\it Finsler regularity index} of $Jf$ at $x$ by:
$$C(Jf(x)):=\sup_{R>0}\inf\{C_{\tiny\rm Finsler}(T): T\in {\rm co}\hspace{0.02in}Jf(B_X(x;R))\}$$
where $Jf(B_X(x;R))=\{ T \in Jf(u): u\in B_X(x;R)\}$.}
\end{definition}

\begin{proposition}\label{metricallyregularindex}
The Finsler regularity index for a pseudo-Jacobian mapping is well defined in the following  sense: {\rm for every  chart $(W,\varphi)$ at $x$ and every chart $(V,\psi)$ at $f(x)$ such that $f(W)\subset V$ we have that if $\mathbf{f} = \psi \circ f \circ \varphi^{-1}$,
$T=d\psi^{-1}(\mathbf{y})\mathbf{T}[d\varphi^{-1}(\mathbf{x})]^{-1}$, $\mathbf{y}=\psi(f(x))$, and $\mathbf{x}=\varphi(x)$ then:
\begin{equation}\label{igual-Fin}
C_{\tiny\rm Finsler}(T)= C_{\psi,\varphi}(\mathbf{T}) := \inf_{|\mathbf{y}^*|_{f(x),\psi}=1}|\mathbf{T}^*{\mathbf{y}^*}|_{x,\varphi}\end{equation}
where $|\cdot|_{x,\varphi}$ and $|\cdot|_{f(x),\psi}$ are the dual norms of
$\|\cdot\|_{x,\varphi}$  and $\|\cdot\|_{f(x),\psi}$, respectively. Moreover:
\begin{equation}\label{igualdadbanachJ}
C(Jf(x)) = C(J\mathbf{f}(\mathbf{x})):=\sup_{R>0}\inf\{C_{\psi,\varphi}(\mathbf{T}): \mathbf{T}\in {\rm co}\hspace{0.02in}J\mathbf{f}(B_E(\mathbf{x};R))\}
\end{equation}
}
\end{proposition}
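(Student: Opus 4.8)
The plan is to prove the two displayed identities \eqref{igual-Fin} and \eqref{igualdadbanachJ} separately, reducing both to the single observation that the chart differentials are isometries for the chart norms. For \eqref{igual-Fin}, note that by the very definition $\|\cdot\|_{x,\varphi}=\|d\varphi^{-1}(\mathbf{x})(\cdot)\|_x$ the map $\alpha:=d\varphi^{-1}(\mathbf{x}):(E,\|\cdot\|_{x,\varphi})\to(T_xX,\|\cdot\|_x)$ is a surjective linear isometry, and likewise $\beta:=d\psi^{-1}(\mathbf{y}):(F,\|\cdot\|_{f(x),\psi})\to(T_{f(x)}Y,\|\cdot\|_{f(x)})$. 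Hence the adjoints $\beta^*$ and $(\alpha^{-1})^*=(\alpha^*)^{-1}$ are isometries for the corresponding dual norms. Since $T=\beta\,\mathbf{T}\,\alpha^{-1}$ we have $T^*=(\alpha^{-1})^*\,\mathbf{T}^*\,\beta^*$. Writing $\mathbf{y}^*=\beta^*y^*$ for $y^*\in(T_{f(x)}Y)^*$, I would use $|\mathbf{y}^*|_{f(x),\psi}=|y^*|_{f(x)}$ together with $|T^*y^*|_x=|(\alpha^{-1})^*\mathbf{T}^*\mathbf{y}^*|_x=|\mathbf{T}^*\mathbf{y}^*|_{x,\varphi}$; as $y^*$ runs over the unit sphere of $(T_{f(x)}Y)^*$ the vector $\mathbf{y}^*$ runs bijectively over that of $(F^*,|\cdot|_{f(x),\psi})$, so taking infima gives $C_{\mathrm{Finsler}}(T)=C_{\psi,\varphi}(\mathbf{T})$. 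This is the algebraic core of the argument.

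Next I would deduce \eqref{igualdadbanachJ}. The transport $\Theta:\mathbf{T}\mapsto\beta\,\mathbf{T}\,\alpha^{-1}$ is a linear bijection of $\mathcal{L}(E,F)$ onto $\mathcal{L}(T_xX,T_{f(x)}Y)$ which preserves the Banach constant by the first part and commutes with convex combinations, so $\Theta(\mathrm{co}\,J\mathbf{f}(B_E(\mathbf{x};R)))=\mathrm{co}\,\Theta(J\mathbf{f}(B_E(\mathbf{x};R)))$, the left-hand convex hull being understood in the fixed space $\mathcal{L}(T_xX,T_{f(x)}Y)$ into which each $Jf(u)$ is carried by the chart. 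The difficulty is that (PJ$_3$) identifies $Jf(u)$ with $J\mathbf{f}(\mathbf{u})$ through the chart differentials at the running point $u$, that is $Jf(u)=\Theta_u(J\mathbf{f}(\mathbf{u}))$ with $\Theta_u(\cdot)=d\psi^{-1}(\psi(f(u)))\,(\cdot)\,[d\varphi^{-1}(\varphi(u))]^{-1}$, rather than through the fixed $\Theta=\Theta_x$, and $C_{\mathrm{Finsler}}$ is evaluated with the Finsler norms attached to the varying base points. To control both effects I would fix $\varepsilon>0$ and invoke the continuity of the $C^1$ charts together with the local comparability axiom of the Finsler structure (equivalently Lemma \ref{lemaequivalencia1}): for $R$ small enough $\Theta_u$ differs from $\Theta$ by an operator of small norm on the bounded set supplied by (PJ$_2$), while $\|\cdot\|_{u,\varphi}$ and $\|\cdot\|_{f(u),\psi}$ differ from those at $x$ by factors within $(1+\varepsilon)^{\pm1}$. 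Since the Banach constant is $1$-Lipschitz in the operator norm and scales controllably under such norm perturbations, the two inner infima over the balls differ by a multiplicative factor tending to $1$ as $R\to0$; passing to $\sup_{R>0}$ in both definitions then forces $C(Jf(x))=C(J\mathbf{f}(\mathbf{x}))$.

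The main obstacle is precisely this last uniform estimate: one must verify that the mismatch between the fixed transport $\Theta$ and the pointwise transport $\Theta_u$ dictated by (PJ$_3$), compounded with the base-point dependence of the Finsler norms entering $C_{\mathrm{Finsler}}$, is uniformly small over the whole ball $B_X(x;R)$ once $R$ is small. Here the local boundedness (PJ$_2$) is essential, since it confines the transported operators to a bounded set on which the $1$-Lipschitz dependence of the Banach constant converts the $(1+\varepsilon)$-perturbations of the norms and of the differentials into $(1+\varepsilon)$-perturbations of the index, and the $\sup_{R>0}$ present in both definitions then washes the error out in the limit.
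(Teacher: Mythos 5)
Your argument for \eqref{igual-Fin} is correct and close in spirit to the paper's: both rest solely on the fact that $d\varphi^{-1}(\mathbf{x})$ and $d\psi^{-1}(\mathbf{y})$ are isometries from the chart norms onto the Finsler norms. The paper performs the change of variables $\mathbf{w}=[d\varphi^{-1}(\mathbf{x})]^{-1}(u)$, $\mathbf{v}=[d\psi^{-1}(\mathbf{y})]^{-1}(y)$ in the primal index $I(T)=\sup_{\|y\|_{f(x)}=1}\inf_{Tu=y}\|u\|_x$ of Remark \ref{equalindex} and then passes back to the Banach constant, while you conjugate the adjoint, $T^*=(\alpha^{-1})^*\mathbf{T}^*\beta^*$, by the dual isometries; your dual-side computation is if anything more self-contained, since it does not rely on the duality between $C$ and $I$.

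The second half is where your proposal breaks down. Your plan hinges on the estimate ``for $R$ small enough, $\Theta_u$ differs from $\Theta=\Theta_x$ by an operator of small norm,'' but this difference is not defined: $\Theta_u$ takes values in $\mathcal{L}(T_uX,T_{f(u)}Y)$ and $\Theta_x$ in $\mathcal{L}(T_xX,T_{f(x)}Y)$, which are distinct spaces. To subtract them you must first identify these operator spaces, and the only identification the data provide is the one built from the chart differentials themselves; under it an operator $S\in Jf(u)$ pulls back (necessarily through the differentials at $\varphi(u)$ and $\psi(f(u))$, by (PJ$_3$)) exactly to an element of $J\mathbf{f}(\varphi(u))$, and then pushes forward into the fixed space as an element of $d\psi^{-1}(\mathbf{y})\,J\mathbf{f}(\varphi(u))\,[d\varphi^{-1}(\mathbf{x})]^{-1}$ --- so the discrepancy you propose to estimate is not small, it is identically zero. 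This is exactly how the paper reads the definition of $C(Jf(x))$ (the convex hull of operators attached to varying base points has no meaning otherwise), and it makes the proof exact rather than asymptotic: since composition with the fixed isomorphisms commutes with convex combinations, ${\rm co}\,Jf(B_X(x;R'))=d\psi^{-1}(\mathbf{y})\,{\rm co}\,J\mathbf{f}(\varphi(B_X(x;R')))\,[d\varphi^{-1}(\mathbf{x})]^{-1}$; identity \eqref{igual-Fin} then converts $C_{\rm Finsler}$ into $C_{\psi,\varphi}$ elementwise; and because $\varphi$ is a homeomorphism one can nest balls in both directions (for each $R$ there is $R'$ with $B_X(x;R')\subset\varphi^{-1}(B_E(\mathbf{x};R))$, and symmetrically), giving the inclusions $A_{R'}\subset B_R$ and $B_{R'}\subset A_R$ between the corresponding sets of constants; letting $R\to 0$ in the resulting inequalities yields \eqref{igualdadbanachJ}. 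In particular, none of the tools your sketch leans on --- the local boundedness (PJ$_2$), the comparability axiom of the Finsler structure (Lemma \ref{lemaequivalencia1}), or Lipschitz stability of the Banach constant --- is needed, and no $\varepsilon$ ever appears: the ``uniform estimate'' you single out as the main obstacle is an artifact of trying to compare two transports that can only be compared after an identification which makes them coincide.
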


\begin{proof} We have that
$$I_{\tiny\rm Finsler}(T)=\sup_{\|y\|_{f(x)}=1}\left\{\inf_{Tu=y}\|u\|_x\right\}.$$
Let us consider the changes of variables $\mathbf{w}:=[d\varphi^{-1}(\mathbf{x})]^{-1}(u)$
and $\mathbf{v}:= [d\psi^{-1}(\mathbf{y})]^{-1}(y)$. Then \eqref{igual-Fin} holds since:
$$I_{\tiny\rm Finsler}(T)=\sup_{\|\mathbf{v}\|_{f(x),\psi}=1}\left\{\inf_{\mathbf{T}\mathbf{w}=\mathbf{v}}\|\mathbf{w}\|_{x,\varphi}\right\}=I_{\psi,\phi}(\mathbf{T}).$$

Now for $R>0$ set
\begin{eqnarray*}
A_R & := & \{C_{\tiny\rm Finsler}(T): T\in {\rm co}\hspace{0.02in}Jf(B_X(x;R))\} \\
B_R & := & \{C_{\psi,\varphi}(\mathbf{T}): \mathbf{T}\in {\rm co}\hspace{0.02in}J\mathbf{f}(B_E(\mathbf{x};R))\}
\end{eqnarray*}

\

\noindent {\bf Claim 1:}  {\it There is $R_0>0$ such that for all $0<R<R_0$, $\inf B_R\leq C(Jf(x)) $}
\begin{enumerate}
\item[] Set $R_0>0$ such that $B_E(\mathbf{x}; R_0)\subset\varphi(W)$. Given $0<R<R_0$, there is $R'>0$, depending on $R$, such that
$$B_X(x;R')\subset\varphi^{-1}\left(B_E(\mathbf{x}; R)\right)\subset W.$$
Let $c\in A_{R'}$; then $c=C_{\tiny\rm Finsler}(T)$ for $T\in  {\rm co}\hspace{0.02in}Jf(B_X(x; R'))$. Note that,
since $s=\Sigma_{i=1}^n\lambda_i  d\psi^{-1}(\mathbf{y})  \mathbf{T}_i  [d\varphi^{-1}(\mathbf{x})]^{-1}= d\psi^{-1}(\mathbf{y}) (\Sigma_{i=1}^n \mathbf{T}_i ) [d\varphi^{-1}(\mathbf{x})]^{-1}$, we have that
$$
{\rm co}\hspace{0.02in}Jf(B_X(x; R')) = d\psi^{-1}(\mathbf{y}) \hspace{0.05in}{\rm co}\hspace{0.02in} J\mathbf{f}(\varphi(B_X(x; R'))) \hspace{0.05in} [d\varphi^{-1}(\mathbf{x})]^{-1},$$
and thus ${\rm co}\hspace{0.02in} J\mathbf{f}(\varphi(B_X(x; R'))) $ is contained in  ${\rm co}\hspace{0.02in} J\mathbf{f}(B_E(\mathbf{x};R)).$
Therefore we have that  $T=d\psi^{-1}(\mathbf{y})\mathbf{T}[d\varphi^{-1}(\mathbf{x})]^{-1}$ with $\mathbf{T}\in {\rm co}\hspace{0.02in} J\mathbf{f}(B_E(\mathbf{x};R))$ and  by equality  \eqref{igual-Fin} we deduce that $c=C_{\psi,\varphi}(\mathbf{T})$. Then $A_{R'}\subset B_R$, so:
$$\inf B_R \leq \inf {A_{R'}}\leq C(Jf(x)). $$
\end{enumerate}

\

\noindent Analogously, we have:

\

\noindent {\bf Claim 2:} {\it There is $R_0>0$ such that for all $0<R<R_0$, $\inf A_R\leq  C(J\mathbf{f}(\mathbf{x})) $}.

\

\noindent By Claim 1, letting $R\rightarrow 0$ we get $ C(J\mathbf{f}(\mathbf{x}))\leq C(Jf(x)) $, and by Claim 2, letting  $R\rightarrow 0$ we get $C(Jf(x))\leq C(J\mathbf{f}(\mathbf{x}))$. So we get
\eqref{igualdadbanachJ}.

\end{proof}

In order to obtain a connection between metric regularity of a function $f$ and the Finsler regularity index of a given  pseudo-Jacobian for $f$, the idea is to take advantage of the known results in the case of Banach spaces, and carefully transfer them to the context of Finsler manifolds. More precisely, we will use Theorem 3.1  and Corollary 2.18 of \cite{JaLaMa}. The bottom line is that the function $\mathbf{x} \mapsto \| \mathbf {f} (\mathbf{x})\|$ must satisfy a sort of a chain rule condition. This condition applies in the most relevant cases as we will detail later. So, we need some more extra assumptions.

\begin{definition}{\bf [Strong pseudo-Jacobians]} {\rm
Let $X$ and $Y$  be two $C^1$ manifolds modeled on Banach spaces $E$ and $F$, respectively. Let  $f:X\rightarrow Y$  be a locally Lipschitz map, and let $Jf$ be a pseudo-Jacobian mapping for $f$. We say that $Jf$ is  a {\it strong pseudo-Jacobian mapping} for  $f$ if, for each $x\in X$, there exists a chart $(W,\varphi)$ at $x$ and a chart $(V,\psi)$ at $f(x)$ satisfying the conditions of Definition \ref{PJ} and, in addition, if we denote $\mathbf{f} = \psi \circ f \circ \varphi^{-1}$,
\begin{itemize}
\item[(PJ$_4$)] For every $\mathbf{u}$ in the domain of the assignment $\mathbf{u}\mapsto J\mathbf{f}(\mathbf{u})$ given in condition (PJ$_2$), $J\mathbf{f}(\mathbf{u})\subset\mathcal{L}(E,F)$ is convex and compact in the weak operator topology (WOT).
\end{itemize}}
\end{definition}

For example, $Jf$ is a strong pseudo-Jacobian mapping if, for every point,  $J\mathbf{f}(\mathbf{u})$  is a Clarke-like generalized Jacobian of $\mathbf{f}$ at $\mathbf{u}$ in the domain of $J\mathbf{f}$. Examples of Clarke-like generalized Jacobians are the Clarke generalized gradient in the infinite-dimensional case, and the Pal\'es-Zeidan generalized Jacobians in the case where the target space is reflexive, see Example 2.5 in \cite{JaLaMa} and references therein.

\begin{definition}{\bf [Finsler manifold with smooth norm]} {\rm
We shall say that a Finsler manifold  {\it $X$ has smooth norm} if, for every $x\in X$, the norm $\Vert \cdot \Vert_x$ is Fr\'echet differentiable (away from $0$.)}
\end{definition}

Of course, a Riemannian manifold has smooth norm. Finite dimensional Finsler manifolds can always be given an equivalent smooth norm. In general, if $X$ is a paracompact smooth manifold modeled on a Banach space $E$ endowed with a Fr\'echet differentiable norm, by  Lemma 2.8 in \cite{palais},  $X$ admits a Finsler structure with smooth norm.

\begin{theorem}\label{lemmaprincipal}
Let $X$ and $Y$  be two $C^1$ Finsler manifolds modeled on Banach spaces, where $Y$ has smooth norm. Let $f:X\rightarrow Y$ be a locally Lipschitz map, and suppose that $Jf(x)$ is a strong pseudo-Jacobian mapping for $f$. Then, for each $x\in X$:
\begin{equation}\label{inequalitycovindex}\cov f(x)\geq C(Jf(x)).\end{equation}
\end{theorem}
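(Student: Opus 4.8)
The plan is to reduce the statement to the corresponding result in Banach spaces and then transfer it along charts using the local comparison between the Finsler metric and the norm metric. The key inputs are Proposition \ref{metricallyregularindex}, which identifies $C(Jf(x))$ with the Banach-space index $C(J\mathbf{f}(\mathbf{x}))$ computed in the chart-induced equivalent norms, together with the fact (cited from \cite{JaLaMa}, Theorem 3.1 and Corollary 2.18) that in the Banach space setting a strong pseudo-Jacobian controls the rate of surjection, i.e.\ $\cov \mathbf{f}(\mathbf{x}) \geq C(J\mathbf{f}(\mathbf{x}))$. The role of the hypothesis that $Y$ has smooth norm is precisely to guarantee that the scalar function $\mathbf{u}\mapsto \Vert \mathbf{f}(\mathbf{u})\Vert_{f(x),\psi}$ obeys the chain-rule estimate needed to invoke those results; the strongness of $Jf$ (condition (PJ$_4$): $J\mathbf{f}(\mathbf{u})$ convex and WOT-compact) is what makes this work.

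First I would fix $x\in X$ and choose charts $(W,\varphi)$ at $x$ and $(V,\psi)$ at $f(x)$ with $f(W)\subset V$, witnessing (PJ$_1$)--(PJ$_4$), and set $\mathbf{f}=\psi\circ f\circ\varphi^{-1}$. On the model spaces I would use the equivalent norms $\Vert\cdot\Vert_{x,\varphi}$ on $E$ and $\Vert\cdot\Vert_{f(x),\psi}$ on $F$; with respect to these norms the Banach-space result yields $\cov\mathbf{f}(\mathbf{x})\geq C(J\mathbf{f}(\mathbf{x}))$. Next I would relate $\cov f(x)$ to $\cov\mathbf{f}(\mathbf{x})$. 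By Corollary \ref{regularitycharts} the chart maps $\varphi$ and $\psi^{-1}$ satisfy $\cov\varphi(x)=1$ and $\cov\psi^{-1}(\mathbf{y})=1$ in these norms, and the rate of surjection is multiplicative under composition of metrically regular maps, so writing $f=\psi^{-1}\circ\mathbf{f}\circ\varphi$ on a suitable neighborhood gives $\cov f(x)\geq \cov\psi^{-1}(\mathbf{y})\cdot\cov\mathbf{f}(\mathbf{x})\cdot\cov\varphi(x)=\cov\mathbf{f}(\mathbf{x})$. Combining these two inequalities with the identity $C(Jf(x))=C(J\mathbf{f}(\mathbf{x}))$ from Proposition \ref{metricallyregularindex} yields $\cov f(x)\geq C(Jf(x))$, which is \eqref{inequalitycovindex}.

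The cleanest way to carry out the middle step is probably not through an abstract composition law for $\cov$ but directly: I would use Lemma \ref{lemaequivalencia1} to pass, for arbitrary $\varepsilon>0$, between $d_X$ and $\Vert\cdot\Vert_{x,\varphi}$ (and likewise $d_Y$ and $\Vert\cdot\Vert_{f(x),\psi}$) on small neighborhoods, translate the openness-with-linear-rate inclusion \eqref{graves} for $\mathbf{f}$ into one for $f$ with rate degraded only by a factor $(1+\varepsilon)^{-2}$, and then let $\varepsilon\to 0$. This makes the transfer entirely metric and avoids any subtlety about whether $\cov$ of a composition is literally the product.

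The main obstacle I expect is the Banach-space inequality $\cov\mathbf{f}(\mathbf{x})\geq C(J\mathbf{f}(\mathbf{x}))$ itself, and in particular verifying that the hypotheses of Theorem 3.1 / Corollary 2.18 of \cite{JaLaMa} actually hold in our chart-transported setting. The critical point is the chain-rule condition on $\mathbf{u}\mapsto\Vert\mathbf{f}(\mathbf{u})\Vert$: one must show that the Dini derivatives of this scalar function are governed by $\sup\{\langle y^*,T(v)\rangle : T\in J\mathbf{f}(\mathbf{u})\}$, which is where Fr\'echet differentiability of the norm on $Y$ (so that $\Vert\cdot\Vert_{f(x),\psi}$ has a single supporting functional at each nonzero point) and the WOT-compactness and convexity of $J\mathbf{f}(\mathbf{u})$ enter. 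Once those hypotheses are checked, the inequality is quoted directly; the remaining work is the routine $\varepsilon$-bookkeeping of the metric transfer described above.
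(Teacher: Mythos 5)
Your proposal is correct and follows essentially the same route as the paper's own proof: reduce to the Banach-space inequality via charts (quoting Theorem 3.1 and Corollary 2.18 of \cite{JaLaMa}, with the smooth norm on $Y$ and WOT-compactness/convexity of $J\mathbf{f}(\mathbf{u})$ playing exactly the roles you identify), then transfer the openness-with-linear-rate inclusion back to $(X,d_X)$, $(Y,d_Y)$ using the chart comparison with a $(1+\varepsilon)^{-2}$ loss and let $\varepsilon\to 0$. Your ``cleanest way'' for the middle step---direct metric bookkeeping rather than an abstract composition law for $\cov$---is precisely what the paper does, via Corollary \ref{regularitycharts}.
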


\begin{proof}
Suppose that $X$ and $Y$ are modeled, respectively, on Banach spaces $E$ and $F$, respectively. Let $x\in X$ and assume  that $C(Jf(x))>0$, since otherwise the inequality is obvious. Let  $(W,\varphi)$ be a chart at $x$ and let $(V,\psi)$ be a chart at $f(x)$ such that $f(W)\subset V$. Set
$\mathbf{f} := \psi \circ f \circ \varphi^{-1}:\varphi(W)\rightarrow\psi(V)$.  Let us endow $E$ and $F$ with the norms $\|\cdot\|_{x,\phi}$ and $\|\cdot\|_{f(x),\psi}$ respectively. Let  $R>0$ such that  $$\beta:=\beta_R=\inf\{C_{\psi,\varphi}(\mathbf{T}): \mathbf{T}\in {\rm co}\hspace{0.02in}J\mathbf{f}(B_E(\mathbf{x};R))\}>0.$$
Since ${\rm co}\hspace{0.02in}J\mathbf{f}(B_E(\mathbf{x};R)):={\rm co}\hspace{0.02in}\{ \mathbf{T} \in J\mathbf{f}(\mathbf{u}): \mathbf{u}\in B_E(\mathbf{x};R)\}$ we have:
$$\bigcup_{\mathbf{u}\in B_R} {\rm co}\hspace{0.02in} J\mathbf{f}(\mathbf{u})\subset {\rm co}\hspace{0.02in}\left (\bigcup_{\mathbf{u}\in B_R} {\rm co}\hspace{0.02in} J\mathbf{f}(\mathbf{u})\right )={\rm co}\hspace{0.02in}\left (\bigcup_{\mathbf{u}\in B_R} J\mathbf{f}(\mathbf{u})\right)={\rm co}\hspace{0.02in}J\mathbf{f}(B_R)$$
where  $B_R:=B_E(\mathbf{x};R)$. Therefore,
$$\inf\{C_{\psi,\varphi}(\mathbf{T}): \mathbf{T}\in {\rm co}\hspace{0.02in}J\mathbf{f}(\mathbf{u}); \mathbf{u}\in B_E(\mathbf{x};R)\}\geq\beta>0.$$ Since $Y$ has smooth norm then $\|\cdot\|_{f(x)}$ is Fr\'echet differentiable. Furthermore, since $Jf$ is a strong pseudo-Jacobian mapping,  then  $J\mathbf{f}(\mathbf{u})$ is convex and compact in the weak operator topology for some $\mathbf{u}$ in a ball $B_E(\mathbf{x};R)$ small enough. By Theorem 3.1  and Corollary 2.18 in \cite{JaLaMa}, for each open ball $B_E(\mathbf{u};\delta)\subset B_E(\mathbf{x};R)$ we have:
\begin{equation}\label{metricregularitybold}
B_F(\mathbf{f(u)};\delta\beta)\subset \mathbf{f}(B_E(\mathbf{u};\delta))
\end{equation}
Now let  $\varepsilon >0$ be fixed. By Theorem \ref{regularitycharts},
there is an open neighborhood  $U_\varepsilon$ at $u$ such that for each open ball $B_X(u;\delta')\subset U_\varepsilon$:
 \begin{equation}\label{metricregularityvarphi}
B_E(\varphi(u);(1+\varepsilon)^{-1}\delta')\subset \varphi(B_X(u;\delta')),
\end{equation}
and there is an open neighborhood  $V_\varepsilon$ of $\mathbf{f(x)}$ such that for each open ball $B_F(\mathbf{f(u)};\delta'')\subset V_\varepsilon$:
 \begin{equation}\label{metricregularitypsi}
B_Y(\psi^{-1}(\mathbf{f(u)});(1+\varepsilon)^{-1}\delta'')\subset \psi^{-1}(B_F(\mathbf{f(u)};\delta'')).
\end{equation}
Let $W_x\subset U_\varepsilon$ small enough such that for all $r>0$ and $u\in W_x$ with $B_X(u;r)\subset W_x$,
$B_E(\varphi(u); (1+\varepsilon)^{-1}r)\subset B_E(\mathbf{x}; R)$, and
 $B_F(\psi(f(u)); (1+\varepsilon)^{-1}\beta r)\subset V_\varepsilon$.
\noindent Let  $u\in W_x$ and $r>0$ with $B_X(u;r)\subset W_x$.
Set $\rho:=\frac{\beta}{(1+\varepsilon)^2}$. By the metric regularity property of $\psi^{-1}$ at $\mathbf{f(u)}$ \eqref{metricregularityvarphi}, we have:
$$B_Y(\psi^{-1}(\mathbf{f}(\mathbf{u}));\rho  r)\subset \psi^{-1}(B_F(\mathbf{f(u)}; \rho(1+\varepsilon)r)).$$
Therefore:
$$\psi(B_Y(\psi^{-1}\mathbf{f}(\mathbf{u});\rho r))\subset B_F(\mathbf{f(u)};  \rho(1+\varepsilon) r).$$
Since $\rho(1+\varepsilon) = \beta(1+\varepsilon)^{-1}$, by \eqref{metricregularitybold} with $\delta:=(1+\varepsilon)^{-1}r$ we get:
$$\psi(B_Y(\psi^{-1}(\mathbf{f}(\mathbf{u}));\rho r))\subset \mathbf{f}\left( B_E\left(\mathbf{u}; \delta  \right)\right).$$
So,
$$B_Y((\psi^{-1}\circ\mathbf{f})(\mathbf{u});\rho r)\subset (\psi^{-1}\circ \mathbf{f})\left( B_E\left(\varphi(u); \delta  \right)\right).$$
Now, by the metric regularity property of $\varphi$ at $x$ \eqref{metricregularitypsi} and since $\mathbf{u}=\varphi(u)$ we conclude:
$$B_Y((\psi^{-1}\circ\mathbf{f}\circ\varphi)(u);\rho r)\subset (\psi^{-1}\circ \mathbf{f}\circ \varphi)(B_X(u; r)).$$
Finally, $B_Y(f(u);\rho r)\subset f(B_X(u;r)),$ and then $\cov f(x)\geq \frac{\beta}{(1+\varepsilon)^2}.$ Taking the supremum over $R>0$ we get:
$$\cov f(x)\geq \frac{C(Jf(x))}{(1+\varepsilon)^2}.$$
Since $\varepsilon>0$ was chosen arbitrary, we get the desired inequality.
\end{proof}


\section{\bfseries\sffamily\large Inverse Mapping Theorem}

\begin{definition}{\rm {\bf [Finsler local-inyectivity index]}
Let $X$ and $Y$  be two $C^1$ Finsler manifolds modeled on Banach spaces. Let $f:X\rightarrow Y$ be a locally Lipschitz map, and suppose that $Jf(x)$ is a pseudo-Jacobian mapping for $f$. Let $x\in X$ and, for every linear operator $T\in \mathcal{L}(T_xX,T_{f(x)}Y)$, consider the dual  Banach constant  given by the Finsler norms on $T_xX$ and $T_{f(x)}Y$, respectively:
 $$C^*_{\tiny\rm Finsler}(T)=\inf_{|u|_{x}=1}|Tu|_{f(x)}.$$
We define the {\it Finsler local-inyectivity index} for a  $Jf$ at $x$ by:
$$C^*(Jf(x)):=\sup_{R>0}\inf\{C^*_{\tiny\rm Finsler}(T): T\in {\rm co}\hspace{0.02in}Jf(B_X(x;R))\}$$}
\end{definition}

In a similar way to Proposition \ref{metricallyregularindex} we can prove that:

\begin{proposition}
The Finsler local-inyectivity index for pseudo-Jacobian mapping is well defined in the following  sense: {\rm for every  chart $(W,\varphi)$ at $x$ and every chart $(V,\psi)$ at $f(x)$ such that $f(W)\subset V$ we have that if $\mathbf{f} = \psi \circ f \circ \varphi^{-1}$,
$T=d\psi^{-1}(\mathbf{y})\mathbf{T}[d\varphi^{-1}(\mathbf{x})]^{-1}$, and $\mathbf{x}=\varphi(x)$ then:
\begin{equation}\label{igualdadbanach}C^*_{\tiny\rm Finsler}(T)= C^*_{\psi,\varphi}(\mathbf{T}) := \inf_{\|\mathbf{u}\|_{x,\varphi}=1}\|\mathbf{T}{\mathbf{u}}\|_{f(x),\psi}.\end{equation}
As expected:
\begin{equation}\label{igualdadbanachJdual}
C^*(Jf(x)) = C^*(J\mathbf{f}(\mathbf{x})):=\sup_{R>0}\inf\{C^*_{\psi,\varphi}(\mathbf{T}): \mathbf{T}\in {\rm co}\hspace{0.02in}J\mathbf{f}(B_E(\mathbf{x};R))\}
\end{equation}}
\end{proposition}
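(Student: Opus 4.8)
The plan is to mirror almost verbatim the proof of Proposition \ref{metricallyregularindex}, replacing the Banach constant $C_{\tiny\rm Finsler}$ by the dual Banach constant $C^*_{\tiny\rm Finsler}$ throughout. The only genuinely new ingredient is the change-of-variables identity \eqref{igualdadbanach}, which is even slightly simpler than \eqref{igual-Fin} because the dual Banach constant is defined directly on $T_xX$ rather than on the dual space, so no adjoint operator enters. First I would fix a chart $(W,\varphi)$ at $x$ and a chart $(V,\psi)$ at $f(x)$ with $f(W)\subset V$, and consider an arbitrary $T=d\psi^{-1}(\mathbf{y})\,\mathbf{T}\,[d\varphi^{-1}(\mathbf{x})]^{-1}$. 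By the very definition of the Finsler dual norms as $\|\cdot\|_{x,\varphi}:=\|d\varphi^{-1}(\varphi(x))(\cdot)\|_x$ and $\|\cdot\|_{f(x),\psi}:=\|d\psi^{-1}(\psi(f(x)))(\cdot)\|_{f(x)}$, the isomorphisms $d\varphi^{-1}(\mathbf{x})$ and $d\psi^{-1}(\mathbf{y})$ are by construction isometries between $(E,\|\cdot\|_{x,\varphi})$ and $(T_xX,\|\cdot\|_x)$, and between $(F,\|\cdot\|_{f(x),\psi})$ and $(T_{f(x)}Y,\|\cdot\|_{f(x)})$, respectively.

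With this observation the proof of \eqref{igualdadbanach} is immediate: I would introduce the substitution $\mathbf{u}:=[d\varphi^{-1}(\mathbf{x})]^{-1}(u)$, so that $u=d\varphi^{-1}(\mathbf{x})(\mathbf{u})$ and $|u|_x=\|\mathbf{u}\|_{x,\varphi}$, and compute
\begin{equation*}
C^*_{\tiny\rm Finsler}(T)=\inf_{|u|_x=1}|Tu|_{f(x)}=\inf_{\|\mathbf{u}\|_{x,\varphi}=1}\big|d\psi^{-1}(\mathbf{y})\,\mathbf{T}\,\mathbf{u}\big|_{f(x)}=\inf_{\|\mathbf{u}\|_{x,\varphi}=1}\|\mathbf{T}\mathbf{u}\|_{f(x),\psi}=C^*_{\psi,\varphi}(\mathbf{T}),
\end{equation*}
where the third equality uses precisely that $d\psi^{-1}(\mathbf{y})$ is an isometry for the respective norms. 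Note that, since $d\varphi^{-1}(\mathbf{x})$ is a bijection, the unit spheres correspond exactly under the substitution, so the infima are taken over the same set of directions and no extra constants appear.

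Having \eqref{igualdadbanach}, I would establish \eqref{igualdadbanachJdual} by the identical two-claim argument used in Proposition \ref{metricallyregularindex}. The key algebraic fact, already recorded there, is that convex combinations commute with pre- and post-composition by the fixed isomorphisms, so that
\begin{equation*}
{\rm co}\,Jf(B_X(x;R'))=d\psi^{-1}(\mathbf{y})\;{\rm co}\,J\mathbf{f}(\varphi(B_X(x;R')))\;[d\varphi^{-1}(\mathbf{x})]^{-1},
\end{equation*}
and that $\varphi$ maps suitable small Finsler balls into and out of the model-space balls $B_E(\mathbf{x};R)$ (this is where Corollary \ref{regularitycharts} and the local bi-Lipschitz estimate of Lemma \ref{lemaequivalencia1} enter). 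Applying \eqref{igualdadbanach} termwise then transfers each $C^*_{\tiny\rm Finsler}$ value to the corresponding $C^*_{\psi,\varphi}$ value, giving the inclusion of value-sets $A_{R'}\subset B_R$ (and symmetrically), whence $\inf B_R\le C^*(Jf(x))$; letting $R\to 0$ in both claims yields the two opposite inequalities and hence the equality \eqref{igualdadbanachJdual}.

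I do not anticipate a serious obstacle here, since the structure is dictated entirely by the already-proved Proposition \ref{metricallyregularindex}. The one point demanding slight care is purely bookkeeping: in \eqref{igual-Fin} the Banach constant lives on the dual and the change of variables is phrased through the adjoint $\mathbf{T}^*$ and the dual norms $|\cdot|_{x,\varphi}$, $|\cdot|_{f(x),\psi}$, whereas here the dual Banach constant lives on the primal spaces and is phrased through $\mathbf{T}$ itself and the Finsler norms $\|\cdot\|_{x,\varphi}$, $\|\cdot\|_{f(x),\psi}$. One must therefore make sure to invoke the isometry property of the \emph{direct} differentials $d\varphi^{-1}(\mathbf{x})$ and $d\psi^{-1}(\mathbf{y})$ on $E$ and $F$, rather than their adjoints on the duals, but this is exactly the content of the definition of the norms $\|\cdot\|_{x,\varphi}$ and $\|\cdot\|_{f(x),\psi}$, so the transfer goes through without modification.
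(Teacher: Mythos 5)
Your proposal is correct and follows exactly the route the paper intends: the paper itself only says the result is proved ``in a similar way to Proposition \ref{metricallyregularindex}'', and your argument is precisely that adaptation, with the change-of-variables identity \eqref{igualdadbanach} (here done directly via the isometries $d\varphi^{-1}(\mathbf{x})$ and $d\psi^{-1}(\mathbf{y})$, without passing through adjoints) followed by the same two-claim comparison of the sets $A_R$ and $B_R$.
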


\begin{theorem}{\bf [Local injectivity]}\label{localinjectivity}
Let $X$ and $Y$  be two $C^1$ Finsler manifolds modeled on Banach spaces. Let $f:X\rightarrow Y$ be a locally Lipschitz map, and suppose that $Jf(x)$ is a pseudo-Jacobian mapping for $f$. Let  $x\in X$ and suppose that $C^*(Jf(x))>\alpha>0$. Then $f$ is locally injective at $x$, an more precisely there is an open neighborhood $U$ of $x$ such that for all $u,u'\in U$:
$$d_Y(f(u),f(u'))\geq\alpha\hspace{0.01in} d_X(u,u').$$
\end{theorem}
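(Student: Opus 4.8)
The plan is to pull everything back to charts, prove a lower-Lipschitz estimate for the chart representative $\mathbf{f}$ in the model spaces, and then transport the estimate to the Finsler metric using Lemma~\ref{lemaequivalencia1}. First I would fix $\alpha'$ with $C^*(Jf(x)) > \alpha' > \alpha$ and choose charts $(W,\varphi)$ at $x$ and $(V,\psi)$ at $f(x)$ with $f(W)\subset V$, setting $\mathbf{f} = \psi\circ f\circ\varphi^{-1}$ and endowing $E$ and $F$ with the norms $\|\cdot\|_{x,\varphi}$ and $\|\cdot\|_{f(x),\psi}$. By the preceding proposition $C^*(Jf(x)) = C^*(J\mathbf{f}(\mathbf{x}))$, so by the definition of the supremum there is $R>0$, with $B_E(\mathbf{x};R)\subset\varphi(W)$ convex, such that $C^*_{\psi,\varphi}(\mathbf{T}) > \alpha'$ --- equivalently $\|\mathbf{T}\mathbf{w}\|_{f(x),\psi}\ge\alpha'\|\mathbf{w}\|_{x,\varphi}$ for every $\mathbf{w}\in E$ --- for all $\mathbf{T}\in\co\, J\mathbf{f}(B_E(\mathbf{x};R))$.

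The core is a lower-Lipschitz estimate for $\mathbf{f}$ on $B_E(\mathbf{x};R)$. Fix $\mathbf{a},\mathbf{b}$ in this (convex) ball and put $\mathbf{v}=\mathbf{b}-\mathbf{a}$. I would first establish the mean value inclusion $\mathbf{f}(\mathbf{b})-\mathbf{f}(\mathbf{a})\in\overline{\co}^{\,w}\{\mathbf{T}\mathbf{v} : \mathbf{c}\in[\mathbf{a},\mathbf{b}],\ \mathbf{T}\in J\mathbf{f}(\mathbf{c})\}$, where $\overline{\co}^{\,w}$ denotes the weakly closed convex hull in $F$. This follows by scalarization: for each $y^*\in F^*$ the function $g(t)=\langle y^*,\mathbf{f}(\mathbf{a}+t\mathbf{v})\rangle$ is continuous with upper right Dini derivative $g'_{+}(t)=(y^*\circ\mathbf{f})'_{+}(\mathbf{a}+t\mathbf{v};\mathbf{v})\le\sup_{\mathbf{T}\in J\mathbf{f}(\mathbf{a}+t\mathbf{v})}\langle y^*,\mathbf{T}\mathbf{v}\rangle$ by the defining inequality of the pseudo-Jacobian, whence the elementary one-dimensional mean value inequality gives $\langle y^*,\mathbf{f}(\mathbf{b})-\mathbf{f}(\mathbf{a})\rangle\le\sup_{\mathbf{c},\mathbf{T}}\langle y^*,\mathbf{T}\mathbf{v}\rangle$; a Hahn--Banach separation argument then yields the claimed inclusion.

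The main obstacle, and the step I would treat most carefully, is passing from this inclusion to the norm estimate $\|\mathbf{f}(\mathbf{b})-\mathbf{f}(\mathbf{a})\|_{f(x),\psi}\ge\alpha'\|\mathbf{v}\|_{x,\varphi}$, because the norm is only weakly lower semicontinuous and a lower bound valid on a set need not survive weak limits. The resolution uses convexity twice. The generating set $\{\mathbf{T}\mathbf{v}:\mathbf{c}\in[\mathbf{a},\mathbf{b}],\ \mathbf{T}\in J\mathbf{f}(\mathbf{c})\}$ is contained in $\{\mathbf{T}\mathbf{v}:\mathbf{T}\in\co\,J\mathbf{f}(B_E(\mathbf{x};R))\}$, which is convex and, by the choice of $R$, consists entirely of vectors of norm $\ge\alpha'\|\mathbf{v}\|_{x,\varphi}$ --- here it is essential that the index is defined through the convex hull $\co\,J\mathbf{f}$, so the bound holds on all convex combinations and not merely on generators. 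By Mazur's theorem the weakly closed convex hull of this set coincides with its norm closure, on which the continuous norm still obeys the same bound; hence $\mathbf{f}(\mathbf{b})-\mathbf{f}(\mathbf{a})$ has norm at least $\alpha'\|\mathbf{v}\|_{x,\varphi}$.

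Finally I would transport the estimate back. Fix $\varepsilon>0$ with $(1+\varepsilon)^2\le\alpha'/\alpha$. Lemma~\ref{lemaequivalencia1}, applied at $x$ and at $f(x)$ together with the continuity of $f$, furnishes a neighborhood $U$ of $x$ with $\varphi(U)\subset B_E(\mathbf{x};R)$ on which the $X$-comparison holds and such that $f(U)$ lies in a neighborhood of $f(x)$ where the $Y$-comparison holds. Writing $\mathbf{u}=\varphi(u)$, $\mathbf{u}'=\varphi(u')$ and chaining the three inequalities gives, for $u,u'\in U$,
$$
d_Y(f(u),f(u'))\ \ge\ \frac{1}{1+\varepsilon}\,\|\mathbf{f}(\mathbf{u})-\mathbf{f}(\mathbf{u}')\|_{f(x),\psi}\ \ge\ \frac{\alpha'}{1+\varepsilon}\,\|\mathbf{u}-\mathbf{u}'\|_{x,\varphi}\ \ge\ \frac{\alpha'}{(1+\varepsilon)^2}\,d_X(u,u')\ \ge\ \alpha\, d_X(u,u').
$$
Since $\alpha>0$, this inequality forces $u=u'$ whenever $f(u)=f(u')$, so $f$ is injective on $U$, which is the assertion. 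Note that the strict inequality $C^*(Jf(x))>\alpha$ is exactly what provides the slack $\alpha'>\alpha$ needed to absorb the two distortion factors $1+\varepsilon$.
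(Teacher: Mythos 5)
Your proposal is correct. The outer scaffolding is exactly the paper's: fix charts, use the chart-independence proposition to replace $C^*(Jf(x))$ by $C^*(J\mathbf{f}(\mathbf{x}))$, obtain a ball $B_E(\mathbf{x};R)$ on which every $\mathbf{T}\in \co\, J\mathbf{f}(B_E(\mathbf{x};R))$ satisfies $\|\mathbf{T}\mathbf{w}\|_{f(x),\psi}\ge \alpha'\|\mathbf{w}\|_{x,\varphi}$, prove a lower-Lipschitz bound for $\mathbf{f}$ there, and then absorb the two $(1+\varepsilon)$ distortion factors from Lemma \ref{lemaequivalencia1} using the slack $\alpha'>\alpha$. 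Where you genuinely diverge is at the core estimate. The paper invokes the approximate mean value theorem of \cite{JaLaMa} (Theorem 2.7): for each $0<r<1$ it produces a \emph{single} operator $\mathbf{T}_0\in\co\bigl(J\mathbf{f}[\mathbf{u},\mathbf{u}']\bigr)$ with $\|\mathbf{f}(\mathbf{u})-\mathbf{f}(\mathbf{u}')-\mathbf{T}_0(\mathbf{u}-\mathbf{u}')\|_{f(x),\psi}\le r\beta\|\mathbf{u}-\mathbf{u}'\|_{x,\varphi}$, then applies the triangle inequality together with $C^*_{\psi,\varphi}(\mathbf{T}_0)\ge\beta$ (as in Lemma 3.8 of \cite{JaLaMa}) and lets $r\to 0$. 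You instead prove from scratch a mean value \emph{inclusion} $\mathbf{f}(\mathbf{b})-\mathbf{f}(\mathbf{a})\in\overline{\co}^{\,w}\{\mathbf{T}\mathbf{v}\}$ by scalarization (Dini-derivative mean value inequality plus Hahn--Banach separation), and then pass to the norm bound via Mazur's theorem and the observation that $\{\mathbf{T}\mathbf{v}:\mathbf{T}\in\co\, J\mathbf{f}(B_E(\mathbf{x};R))\}$ is a convex set all of whose elements have norm at least $\alpha'\|\mathbf{v}\|_{x,\varphi}$ --- correctly flagging and resolving the weak-lower-semicontinuity pitfall. What each approach buys: the paper's is shorter by citation; yours is self-contained, uses nothing beyond the defining inequality of a pseudo-Jacobian and convexity (in particular it needs neither WOT-compactness nor, strictly speaking, the local boundedness (PJ$_2$)), and it sidesteps the $r\to0$ limiting step (where, incidentally, the paper's displayed inequality has a sign typo, $\leq$ in place of $\geq$). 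Both arguments rest on the same essential fact, namely that the injectivity index is defined through the convex hull of the pseudo-Jacobian over a ball, so the bound survives convexification.
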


\begin{proof}
Denote by $E$ and $F$, respectively, the model spaces of $X$ and $Y$, respectively. Let  $(W,\varphi)$ be a chart at $x$ and let $(V,\psi)$ be a chart at $f(x)$ such that $f(W)\subset V$. Given $\varepsilon >0$, let $U_x$ be an open neighborhood with $U_x\subset W$ and such that for all $u,u'\in U_x$:
$$
\frac{1}{(1+\varepsilon)}\|\varphi(u)-\varphi(u')\|_{x,\varphi} \leq d_X(u,u')\leq (1+\varepsilon) \|\varphi(u)-\varphi(u')\|_{x,\varphi}.
$$
We may also assume that $ f(U_x)\subset O_{f(x)}\subset f(W)$ for some $O_{f(x)}$ such that  for all $y,y'\in O_{f(x)}$:
$$
\frac{1}{(1+\varepsilon)}\|\psi(y)-\psi(y')\|_{f(x),\psi} \leq d_Y(y,y')\leq (1+\varepsilon) \|\psi(y)-\psi(y')\|_{f(x),\psi}.
$$
Let $\mathbf{f} = \psi \circ f \circ \varphi^{-1}:\varphi(U_x)\rightarrow\psi(O_{f(x)})$. We have that there exists $R>0$ such that $B_E(\mathbf{x};R)\subset \varphi(U_x)$  and, for all $\mathbf{T}\in {\rm co}\hspace{0.02in}J\mathbf{f}(B_E(\mathbf{x};R))$ we have that
$$C^*_{\psi,\varphi}(\mathbf{T})\geq\beta>\alpha>0 $$
for some $\beta$. Take $\mathbf{u},\mathbf{u'}\in B_E(\mathbf{x};R)$ and pick a $0<r<1$. Let us endow $E$ and $F$ with the norms $\|\cdot\|_{x,\phi}$ and $\|\cdot\|_{f(x),\psi}$ respectively. By Theorem 2.7 of \cite{JaLaMa} there exists an operator $\mathbf{T}_0\in \mbox{\rm co}(J\mathbf{f}[\mathbf{u},\mathbf{u'}])$ such that
$$\|\mathbf{f}(\mathbf{u})-\mathbf{f}(\mathbf{u'})-\mathbf{T}_0(\mathbf{u}-\mathbf{u'})\|_{f(x),\psi}\leq r\beta\|\mathbf{u}-\mathbf{u'}\|_{x,\varphi}.$$ Since $[\mathbf{u},\mathbf{u'}]\subset B_E(\mathbf{x};R)\subset \varphi(W)$ we have that $C^*_{\psi,\varphi}(\mathbf{T}_0)>\alpha>0$. Then as in proof of Lemma 3.8 in \cite{JaLaMa} we get
$$\|\mathbf{f}(\mathbf{u})-\mathbf{f}(\mathbf{u'})\|_{f(x),\psi}\leq (1-r)\beta\|\mathbf{u}-\mathbf{u'}\|_{x,\varphi}.$$
Since $r$ was chosen arbitrarily, we get that for all $\mathbf{u},\mathbf{u'}\in  B_E(\mathbf{x};R)$:
$$\|\mathbf{f}(\mathbf{u})-\mathbf{f}(\mathbf{u'})\|_{f(x),\psi}\geq\beta\|\mathbf{u}-\mathbf{u'}\|_{x,\varphi}.$$
Therefore, for all $u,u'\in U_\varepsilon:=\varphi^{-1}(B_E(\mathbf{x};R))$
 $$\| \psi \circ f \circ \varphi^{-1}(\varphi(u))-\psi \circ f \circ \varphi^{-1}(\varphi(u'))\|_{f(x),\psi}\geq\beta\|\varphi(u)-\varphi(u')\|_{x,\varphi}.$$
Then:
$$\| \psi( f (u))-\psi( f (u'))\|_{f(x),\psi}\geq \beta\|\varphi(u)-\varphi(u')\|_{x,\varphi}.$$
By above inequalities we get:
$$d_Y(f(u),f(u'))\geq\frac{\beta}{(1+\varepsilon)^2}d_X(u,u').$$
It is enough to choose $\varepsilon>0$ such that $\alpha<\frac{\beta}{(1+\varepsilon)^2}<\beta$.
\end{proof}

\begin{definition}
{\rm Let  $f:X\rightarrow Y$ be a locally Lipschitz map between $C^1$ Finsler manifolds, and let $Jf$ be a  pseudo-Jacobian mapping for $f$.  We shall say  that $f$ is {\it $Jf$-regular} at a point  $x \in X$ if:
 \begin{enumerate}
 \item   $C(Jf(x))>0$, and
 \item   there exists is $R>0$ such that every $T$ in ${\rm co}\hspace{0.02in}Jf(B_X(x;R))$ is a linear isomorphism.
\end{enumerate}
Note that if condition $(2)$ holds, we have that
\begin{equation}\label{inversioncondition}
C(Jf(x))=C^*(Jf(x)).
\end{equation}}
\end{definition}

Combining Lemma \ref{lemmaprincipal} and Theorem \ref{localinjectivity} we obtain at once the following local inversion result.

\begin{theorem}{\bf [Inverse Mapping Theorem]}\label{inversemapping}
Let $f:X\rightarrow Y$ be locally Lipschitz map between  $C^1$ Finsler manifolds, where $Y$ has smooth norm, and let  $Jf$ be a strong pseudo-Jacobian mapping for $f$. Suppose that $f$ is  $Jf$-regular at a point  $x \in X$, with  $C(Jf(x))> \alpha >0$. Then there is an open neighborhood $U$ at $x$ such that:
\begin{enumerate}
\item[$(1)$] $f_x:= f|_U:U\rightarrow f(U)$ is a homeomorphism with $$d_Y(f_x(u),f_x(u'))\geq\alpha d_X(u,u')\mbox{ for all } u,u'\in U.$$
\item[$(2)$] $f_x$ is an open map with linear rate at every point of $U$ and also:  $$C^*(Jf(u))=C(Jf(u))\geq\alpha \mbox{ for all } u\in U.$$
\item[$(3)$]  $f_x^{-1}$ is Lipschitz continuous and $\lip f_x^{-1}(f(u)) \leq \alpha^{-1}$ for all $u\in U$.
\end{enumerate}
\end{theorem}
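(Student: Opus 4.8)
The plan is to derive all three conclusions by combining the local injectivity estimate of Theorem~\ref{localinjectivity} with the covering estimate of Theorem~\ref{lemmaprincipal}, using the regularity hypothesis to pass freely between the two Banach constants $C(Jf)$ and $C^*(Jf)$ via \eqref{inversioncondition}. First, since $f$ is $Jf$-regular at $x$, condition $(2)$ of regularity together with \eqref{inversioncondition} gives $C^*(Jf(x)) = C(Jf(x)) > \alpha$. This is exactly the hypothesis of Theorem~\ref{localinjectivity}, which therefore furnishes an open neighborhood $U_0$ of $x$ on which $d_Y(f(u),f(u')) \ge \alpha\, d_X(u,u')$ for all $u,u' \in U_0$.

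From this single inequality I would read off parts $(1)$ and $(3)$ directly. The lower bound forces $f|_{U_0}$ to be injective, so $f_x := f|_U : U \to f(U)$ is a continuous bijection for any open $U \subset U_0$; rewriting the inequality with $y = f_x(u)$ and $y' = f_x(u')$ yields $d_X(f_x^{-1}(y),f_x^{-1}(y')) \le \alpha^{-1} d_Y(y,y')$, so that $f_x^{-1}$ is $\alpha^{-1}$-Lipschitz, hence continuous. This gives the homeomorphism asserted in $(1)$ and the bound $\lip f_x^{-1}(f(u)) \le \alpha^{-1}$ asserted in $(3)$, both of which hold verbatim on the eventual smaller $U$.

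The substantive point is part $(2)$, where the regularity index must be controlled not only at $x$ but at every nearby point. Fix $\alpha'$ with $\alpha < \alpha' < C(Jf(x))$ and, working in charts $(W,\varphi)$, $(V,\psi)$ as in Proposition~\ref{metricallyregularindex}, choose $R_0>0$ (no larger than the radius appearing in condition $(2)$ of regularity) so that every operator in ${\rm co}\, J\mathbf{f}(B_E(\mathbf{x};R_0))$ is an isomorphism and $\inf\{C_{\psi,\varphi}(\mathbf{T}) : \mathbf{T}\in {\rm co}\, J\mathbf{f}(B_E(\mathbf{x};R_0))\} > \alpha'$. For $u$ close to $x$ and $\rho$ small one has the inclusion ${\rm co}\, J\mathbf{f}(B_E(\mathbf{u};\rho)) \subset {\rm co}\, J\mathbf{f}(B_E(\mathbf{x};R_0))$, so these operators remain isomorphisms, $f$ is again $Jf$-regular at $u$, and \eqref{inversioncondition} gives $C^*(Jf(u)) = C(Jf(u))$.

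It remains to check $C(Jf(u)) \ge \alpha$, and here lies the main obstacle: the index at $u$ is computed with the Finsler norms based at $u$ and $f(u)$, not at $x$ and $f(x)$, so the inclusion above does not by itself transfer the lower bound on the Banach constant. The hard part will be to absorb this base-point dependence, and I would do so using the local comparability of the Finsler norms (the defining property $(2)$ of the Finsler structure, exactly as exploited in Lemma~\ref{lemaequivalencia1}): shrinking the neighborhood, using continuity of $f$ so that $f(u)$ stays in $V$, one can arrange that the norms at $u$ and $f(u)$ differ from those at $x$ and $f(x)$ by a factor at most $1+\varepsilon$, whence the corresponding Banach constants differ by at most $(1+\varepsilon)^2$ and $C(Jf(u)) \ge (1+\varepsilon)^{-2}\alpha'$. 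Choosing $\varepsilon$ so small that $(1+\varepsilon)^{-2}\alpha' > \alpha$ yields $C(Jf(u)) > \alpha$ on some neighborhood $U_1$. Finally, applying Theorem~\ref{lemmaprincipal} at each $u \in U := U_0 \cap U_1$ gives $\cov f(u) \ge C(Jf(u)) > \alpha > 0$, so $f_x$ is open with linear rate at every point of $U$; combined with the index equality already obtained, this completes $(2)$.
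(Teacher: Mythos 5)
Your proposal is correct and follows essentially the same route as the paper: the paper gives no separate proof of Theorem~\ref{inversemapping}, presenting it as obtained ``at once'' by combining Theorem~\ref{lemmaprincipal} with Theorem~\ref{localinjectivity}, which is precisely your decomposition (Theorem~\ref{localinjectivity} together with \eqref{inversioncondition} for parts $(1)$ and $(3)$, and Theorem~\ref{lemmaprincipal} for part $(2)$). The only difference is that you spell out the propagation of the bound $C(Jf(u))\geq\alpha$ and of $Jf$-regularity to nearby points $u$ --- including the $(1+\varepsilon)^{2}$ correction for the change of base point in the Finsler norms --- a genuine detail that the paper leaves implicit.
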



\section{ \bfseries\sffamily\large Global inverse theorems}

The Ehresmann Theorem asserts that a proper submersion $f:X\rightarrow Y$ between finite dimensional manifolds, where $X$ paracompact and $Y$ connected, is a locally trivial fiber bundle. Recall that a map $f:X\rightarrow Y$ between manifolds is a {\it submersion} is $f$ is differentiable and such that $df(x)$ is surjective for all $x\in X$. On the other hand, $f$ is said to be a {\it proper map} provided $f^{-1}(K)$ is compact in $X$ whenever $K$ is compact in $Y$. In a remarkable work \cite{rabier}, Rabier extends Ehresmann theorem to the framework of infinite-dimensional Finsler manifolds  via the notion of  ``strong submersions'', which is a generalization of proper submersions (see Theorem 4.1 in \cite{rabier}). The notion of strong submersion is closely related to the Palais-Smale condition for a non-linear functional $f$,  setting  down that there should be no sequence $\{x_n\}$ in  $X$ such that $\{f(x_n)\}$ converges and $\|df(x_n)\|$ tends to $0$. In this form, it can be generalized to mappings between Finsler manifolds via the Banach constant. So, acording to Definition 3.2 in \cite{rabier}, and using the our notation as in Remark \ref{equalindex}, a $C^1$ mapping $f:X\rightarrow Y$ between $C^1$ Finsler manifolds is a {\it strong submersion} is there is no sequence $\{x_n\}$ in $X$ with $f(x_n)\rightarrow y\in Y$ and $C(df(x_n))\rightarrow 0$. In  our context it is natural to set up the following definition:

\begin{definition}{\bf [Strong submersion]} {\rm Let  $f:X\rightarrow Y$ be a locally Lipschitz  map between $C^1$ Finsler manifolds, and let $Jf$ be a pseudo-Jacobian mapping for $f$. We shall say that $f$ is a {\it strong submersion} if  there is no sequence $\{x_n\}$ in $X$ with $f(x_n)\rightarrow y\in Y$ and $C(Jf(x_n))\rightarrow 0$.}
\end{definition}

When $f$ is a local diffeomorphism, the fibers of a fiber bundle are discrete, so Theorem 4.1 in \cite{rabier} yields to Corollary 4.2 in the same reference: If {\it $f$ is both a local diffeomorphism and a strong submersion between Finsler manifolds then it is a covering map}.  We present a generalization of this result in a non-smooth setting:

\begin{theorem}{\bf [Covering maps I]}\label{covering1}
Let $X$ and $Y$  be two $C^1$ Finsler manifolds, where $X$ is complete and $Y$ has smooth norm, and let $f:X\rightarrow Y$ be a locally Lipschitz  map. Suppose that $Jf$ is a strong pseudo-Jacobian for $f$, which is  is $Jf$-regular  at every point  $x\in X$.  If $f $ is a strong submersion then $f$ is a covering map, the set-valued inverse map $f^{-1}$ has the  pseudo-Lipchitz property around $(y,x)$ for every  $y\in Y$ and $x\in f^{-1}(y)$, and  $\lip f^{-1}(y|x)\leq {C(Jf(x))}^{-1}.$
\end{theorem}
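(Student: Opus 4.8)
The plan is to split the statement into its local metric content and its global covering content. The two metric assertions are immediate from Section~4: fixing $x$ and applying Theorem~\ref{inversemapping}, the hypothesis that $f$ is $Jf$-regular at every point makes $f$ a local homeomorphism which is open with linear rate, and Theorem~\ref{lemmaprincipal} gives $\cov f(x)\ge C(Jf(x))>0$. Since $f$ is open with linear rate around $x$, its set-valued inverse $f^{-1}$ has the pseudo-Lipschitz property around $(f(x),x)$, and the relations \eqref{igualdadcovlip} yield $\lip f^{-1}(f(x)\,|\,x)=\reg f(x)=\cov f(x)^{-1}\le C(Jf(x))^{-1}$, which is exactly the claimed bound for every $y$ and every $x\in f^{-1}(y)$. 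The substantial part is therefore to prove that this local homeomorphism is a covering map.

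For the covering property I would establish the \emph{continuation property}: every continuous path $\gamma:[0,1]\to Y$ admits, for each $x_0\in f^{-1}(\gamma(0))$, a lift $\tilde\gamma$ with $\tilde\gamma(0)=x_0$ and $f\circ\tilde\gamma=\gamma$, unique by local injectivity. Writing $t^\ast$ for the supremum of times up to which a lift exists, one has $t^\ast>0$ by the local homeomorphism property, and it remains to show that the lift is Cauchy as $t\uparrow t^\ast$, hence extendable, forcing $t^\ast=1$ and attaching the endpoint.

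This is where strong submersion and completeness of $X$ enter. As $t\uparrow t^\ast$ one has $f(\tilde\gamma(t))=\gamma(t)\to\gamma(t^\ast)$; if $C(Jf(\tilde\gamma(t_n)))\to 0$ along some $t_n\uparrow t^\ast$, the points $x_n=\tilde\gamma(t_n)$ would satisfy $f(x_n)\to\gamma(t^\ast)$ and $C(Jf(x_n))\to 0$, contradicting the definition of strong submersion. Thus $C(Jf(\tilde\gamma(t)))\ge c>0$ near $t^\ast$, so by Theorem~\ref{lemmaprincipal} and \eqref{igualdadcovlip} the rates $\reg f(\tilde\gamma(t))\le c^{-1}$ stay uniformly bounded. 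The step I expect to be the main obstacle is to pass from this uniform \emph{rate} to a uniform \emph{radius}: one must guarantee that the balls on which $f$ is a homeomorphism with a $c^{-1}$-Lipschitz inverse do not shrink along the lift. I would obtain this by inspecting the proof of Theorem~\ref{localinjectivity}, where the admissible radius $R$ is controlled only through a lower bound on $C^{*}_{\psi,\varphi}$ over $\mathrm{co}\,J\mathbf f(B_E(\mathbf x;R))$ and through the local boundedness (PJ$_2$) of the pseudo-Jacobian, both of which are uniform near $\gamma(t^\ast)$ thanks to the bound $c$. With a uniform radius and constant, the estimate $d_X(\tilde\gamma(s),\tilde\gamma(t))\le(c^{-1}+\varepsilon)\,d_Y(\gamma(s),\gamma(t))$ holds for $s,t$ near $t^\ast$; uniform continuity of $\gamma$ then makes $\tilde\gamma$ Cauchy, and completeness of $X$ provides $\tilde\gamma(t)\to x^\ast$ with $f(x^\ast)=\gamma(t^\ast)$. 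The local homeomorphism at $x^\ast$ extends the lift beyond $t^\ast$ (or attaches the endpoint when $t^\ast=1$), so the lift reaches $t=1$.

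Granting the continuation property, the rest is standard. Surjectivity follows by lifting, from a fixed $x_0$, a path in the path-connected manifold $Y$ from $f(x_0)$ to an arbitrary point, so $f(X)=Y$. For the covering structure, fix $y$; by strong submersion there are $\epsilon,c>0$ with $C(Jf(x))\ge c$ for all $x\in f^{-1}(B_Y(y;\epsilon))$ (otherwise a sequence $x_n\in f^{-1}(B_Y(y;1/n))$ with $C(Jf(x_n))<1/n$ would be forbidden). For each $x\in f^{-1}(y)$ I lift the radial segments of $B_Y(y;\epsilon)$ starting at $x$ to obtain an open set mapped homeomorphically onto $B_Y(y;\epsilon)$, the uniform radius from the previous step ensuring that these sets are pairwise disjoint and exhaust $f^{-1}(B_Y(y;\epsilon))$. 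Hence $B_Y(y;\epsilon)$ is evenly covered and $f$ is a covering map; together with the local part, this gives all three assertions.
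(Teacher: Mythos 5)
Your metric assertions are handled exactly as in the paper: Theorem \ref{inversemapping} plus Theorem \ref{lemmaprincipal} plus \eqref{igualdadcovlip} give the pseudo-Lipschitz property of $f^{-1}$ and the bound $\lip f^{-1}(y|x)\leq C(Jf(x))^{-1}$. The difference is in the covering part. The paper does not prove path lifting from scratch: it verifies the three hypotheses of Theorem 5.2 of \cite{GutuJaramillo} (local $\mathcal R$-contractibility of $Y$; $f$ a local quasi-isometry; the continuation property for \emph{rectifiable} paths, obtained from strong submersion and the inequality $D_x^-f\geq C(Jf(x))$, i.e.\ by your own argument) and then invokes that theorem as a black box. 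You instead attempt to re-prove the black box, and your re-proof has a genuine gap at the step you yourself flag as ``the main obstacle''.

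The gap is the passage from the uniform lower bound $C(Jf(\tilde\gamma(t)))\geq c$ to a uniform \emph{radius} of local invertibility along the lift. Your justification---that the radius in Theorem \ref{localinjectivity} is controlled by quantities that are ``uniform near $\gamma(t^\ast)$''---conflates neighborhoods in $Y$ with neighborhoods in $X$. The points $\tilde\gamma(t)$ lie in $X$ and are precisely not known to cluster anywhere (that is what is being proved), and the set $f^{-1}(V)$, for $V$ a neighborhood of $\gamma(t^\ast)$, is in general unbounded and non-compact in $X$. Over this set nothing is uniform: $C(Jf(x))\geq c$ is a statement about a supremum over $R$, so it only produces, for each $x$, some radius $R_x$ with $\inf\{C^{*}_{\rm Finsler}(T):T\in \mathrm{co}\,Jf(B_X(x;R_x))\}$ close to $c$, with $R_x$ depending on $x$; likewise the radii on which the chart-distortion estimates of Lemma \ref{lemaequivalencia1} hold vary from point to point, and in an infinite-dimensional manifold there is no compactness to make either one uniform. (The same unjustified uniformity is used again in your even-covering construction, and your claim to lift arbitrary \emph{continuous} paths depends on it too, since for non-rectifiable paths the displacement of the lift cannot be controlled any other way.) The standard repair---which is what the proof of the cited Theorem 5.2 actually does---is to avoid uniform radii altogether: restrict to rectifiable paths (these suffice, because a Finsler manifold is a length space, locally $\mathcal R$-contractible), use the pointwise bound $D_x^-f\geq c$ along the partial lift together with compactness of $\tilde\gamma([0,b'])$ for each $b'<t^\ast$ to obtain the length estimate $\ell(\tilde\gamma|_{[0,b']})\leq c^{-1}\,\ell(\gamma|_{[0,b']})$, and then let $b'\uparrow t^\ast$: the lift has finite length, hence is Cauchy, and completeness of $X$ yields the limit point. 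With that replacement (or by simply citing Theorem 5.2 of \cite{GutuJaramillo}, as the paper does) your argument becomes correct.
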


\begin{proof}
In order to apply Theorem  5.2 of  \cite{GutuJaramillo} we need to verify the following check list:
\begin{enumerate}
\item[$(i)$]  {\it $Y$ is locally $\mathcal R$-contractible  \cite[p. 78]{GutuJaramillo}}: this is a fairly general class of metric spaces with nice local structure.  In particular, Example 2.4 in \cite{GutuJaramillo} states that every Finsler manifold is a locally $\mathcal R$-contractible space.
\item[$(ii)$] {\it $f$ is a local quasi-isometric map}: this means that for every $x\in X$ there is an open neighborhood $U$ of $x$, and constants $0<m\leq M$ such that:
$$m\leq\inf_{u\in U}D_u^-f\leq\sup_{u\in U}D_u^+f\leq M$$
where $$D_u^-f=\liminf_{w\rightarrow u}\frac{d_Y(f(w),f(u))}{d_X(w,u)}\mbox{ and }D_u^+f=\limsup_{w\rightarrow u}\frac{d_Y(f(w),f(u))}{d_X(w,u)}.$$
This follows from our Inverse Mapping Theorem \ref{inversemapping} and the fact that $f$ is locally Lipschitz.
\item[$(iii)$] {\it $f$ has the continuation property for rectifiable paths}: this means that for every rectifiable path $p:[0,1]\rightarrow Y$ in $Y$, every $b\in (0,1]$ and every $q:[0,b)\rightarrow X$ such that $f\circ q = p$ over $[0,b)$, there exists $\beta>0$ such that $\inf\{D_x^-f: x \mbox{ in the image of } q\}\geq\beta.$ Indeed, since $f$ is a strong submersion, then for all $y\in Y$ there is some $\beta_y>0$ and a neighborhood $V$ of $y$ such that $C(Jf(x))\geq\beta_y$ for all $x\in f^{-1}(V)$. Since the image of $p$ is a compact set in $Y$, by a standard compacteness argument there is $\beta>0$ such that $\inf\{C(Jf(x)): x \mbox{ in the image of } q\}\geq\beta.$
Now, for each $x\in X$ and each $0<\alpha < C(Jf(x))$, by our Inverse Mapping Theorem (Theorem \ref{inversemapping}) $D_x^-f\geq\alpha$. Therefore, $D_x^-f\geq C(Jf(x))$,  and so $f$ has the continuation property for rectifiable paths.
\end{enumerate}
On the other hand note that, in fact,  the requirement ``$Y$ complete'' is not necessary in the proof of Theorem  5.2 of \cite{GutuJaramillo}. Thus, from Theorem  5.2 of \cite{GutuJaramillo} we obtain that $f$ is a covering map. Let $y\in Y$ and $x\in f^{-1}(y)$. By \eqref{igualdadcovlip} $\cov f(x)^{-1}=\reg f(x)=\lip f^{-1}(y|x)$. By \eqref{lemmaprincipal} we have that $\lip f^{-1}(y|x)\leq {C(Jf(x))}^{-1}.$
\end{proof}

\noindent Motivated by the ``weighted'' Palais-Smale condition ---such as Cerami condition \cite{schechter}---  and the Hadamard theorem for local diffeomorphism between Banach spaces \cite{hadamard, Plastock, john, katriel, gutu2015} via the integral condition:
$$\int_0^\infty \inf_{|x|\leq \rho} C(df(x))d\rho=\infty,$$ we define below the concept of {\it weighted strong submersion}.

\begin{definition}{\bf [Weighted strong submersion]} {\rm A  {\it weight} is a nondecreasing map (not necessarily continuous)  $\omega:[0,\infty)\rightarrow (0,\infty)$ such that  $$\int_0^\infty\frac{1}{\omega(\rho)}d\rho=\infty.$$
Let  $f:X\rightarrow Y$ be a locally Lipschitz  map between $C^1$ Finsler manifolds, and let $Jf$ be a pseudo-Jacobian mapping for $f$. We shall say that $f$ is a {\it weighted strong submersion} if  there is no sequence $\{x_n\}$ in $X$ with $f(x_n)\rightarrow y\in Y$ and $C(Jf(x_n))\omega(d_X(x^*,x))\rightarrow 0$ for some $x^*\in X$ and some weight.}
\end{definition}

\begin{theorem}{\bf [Covering maps II]}\label{coveringweighted}
Let $X$ and $Y$  be two $C^1$ Finsler manifolds, where $X$ is complete and $Y$ has smooth norm, and let $f:X\rightarrow Y$ be a locally Lipschitz map. Suppose that $Jf$ is a strong pseudo-Jacobian mapping for $f$ which is  is $Jf$-regular  at every point  $x\in X$. If $f$ is a weighted strong submersion then $f$ is a covering map, the set-valued inverse map $f^{-1}$ has the  pseudo-Lipschitz property around $(y,x)$ for every  $y\in Y$ and $x\in f^{-1}(y)$ with $\lip f^{-1}(y|x)\leq {C(Jf(x))}^{-1}.$
\end{theorem}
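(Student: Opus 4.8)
The plan is to run the same argument as in Theorem \ref{covering1}, verifying for $f$ the three hypotheses of Theorem 5.2 of \cite{GutuJaramillo} and then reading off the pseudo-Lipschitz estimate. Conditions (i) and (ii) there --- namely that $Y$ is locally $\mathcal{R}$-contractible and that $f$ is a local quasi-isometric map --- require no modification: the first holds because every Finsler manifold is locally $\mathcal{R}$-contractible, and the second follows verbatim from the Inverse Mapping Theorem \ref{inversemapping} and the local Lipschitz continuity of $f$, using that $Jf$-regularity forces $C(Jf(x))>0$ everywhere. The entire difficulty lies in recovering the continuation property (iii) for rectifiable paths, where the uniform lower bound available under the unweighted hypothesis of Theorem \ref{covering1} is lost.

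Let $x^*$ and the weight $\omega$ be those supplied by the weighted strong submersion hypothesis, and fix a rectifiable path $p:[0,1]\to Y$. First I would promote the sequential hypothesis to a local uniform estimate: by the compactness argument of step (iii) in the proof of Theorem \ref{covering1}, for each $y\in Y$ there are a neighborhood $V_y$ and $\beta_y>0$ with $C(Jf(x))\,\omega(d_X(x^*,x))\ge\beta_y$ for all $x\in f^{-1}(V_y)$, since otherwise a sequence $\{x_n\}$ with $f(x_n)\to y$ and $C(Jf(x_n))\,\omega(d_X(x^*,x_n))\to 0$ would contradict the hypothesis. Covering the compact set $p([0,1])$ by finitely many such $V_y$ yields a single $\beta>0$ with
$$C(Jf(x))\,\omega(d_X(x^*,x))\ge\beta\qquad\text{for every } x\in f^{-1}(p([0,1])).$$

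The crucial step is then to show that any partial lift $q:[0,b)\to X$ of $p$ stays inside a fixed metric ball centered at $x^*$. Reparametrizing $p$ by arclength and writing $\tilde\rho(t)=d_X(x^*,q(t))$, the local injectivity estimate of Theorem \ref{inversemapping} applied along $p$ makes $q$ locally Lipschitz in the arclength parameter, so that $\tilde\rho$ is a real-valued absolutely continuous function satisfying $\tilde\rho'\le C(Jf(q))^{-1}\le \omega(\tilde\rho)/\beta$ almost everywhere. Introducing the primitive $\Omega(\rho):=\int_0^\rho \omega(s)^{-1}\,ds$ --- continuous, strictly increasing, and tending to $\infty$ by the very definition of a weight --- the chain rule gives $(\Omega\circ\tilde\rho)'\le \beta^{-1}$ a.e., whence $\Omega(\tilde\rho(t))\le\Omega(\tilde\rho(0))+\beta^{-1}\ell(p)<\infty$ for all $t\in[0,b)$. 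Since $\Omega$ increases to infinity, $\tilde\rho$ must remain bounded by some $M$ on $[0,b)$. The monotonicity of $\omega$ then upgrades the weighted bound into a uniform one, $C(Jf(q(t)))\ge\beta/\omega(M)>0$, so that $D_{q(t)}^-f\ge\beta/\omega(M)$ by the Inverse Mapping Theorem; this is exactly the continuation property (iii). With (i)--(iii) established, Theorem 5.2 of \cite{GutuJaramillo} (the completeness of $Y$ being unnecessary, as observed in Theorem \ref{covering1}) shows that $f$ is a covering map, and the estimate $\lip f^{-1}(y|x)\le C(Jf(x))^{-1}$ follows from \eqref{igualdadcovlip} and Theorem \ref{lemmaprincipal} exactly as before.

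I expect the boundedness-of-the-lift step to be the main obstacle. One must justify carefully that $\tilde\rho$ is absolutely continuous and that the pointwise comparison $\tilde\rho'\le C(Jf(q))^{-1}$ holds almost everywhere: this requires deriving the local quasi-isometry estimate along a partition of $[0,b)$ rather than assuming it globally, and working with the upper right Dini derivative of $\tilde\rho$, which is legitimate precisely because $\tilde\rho$ is a real Lipschitz function even when $q$ itself need not be differentiable. Passing to the primitive $\Omega$ is what lets the Osgood-type estimate absorb the fact that $\omega$ is merely nondecreasing and possibly discontinuous.
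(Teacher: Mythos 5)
Your proposal is correct, and it differs from the paper's proof in one substantive way. The paper's own argument is much shorter: after the same compactness step, it verifies the \emph{weighted} path-lifting condition $\inf\{D_x^-f\cdot\omega(d_X(x,x^*)) : x \mbox{ in the image of } q\}\geq\beta$ along any partial lift $q$ (the ``bounded path lifting property with respect to $\omega$''), and then invokes Theorem 5.2 of \cite{GutuJaramillo} in a form that accepts this weighted hypothesis as such, so the entire work of absorbing the weight is delegated to that reference. You instead convert the weighted bound into the unweighted continuation property used in Theorem \ref{covering1}, by showing directly that a partial lift stays in a fixed ball: the comparison $\tilde\rho'\leq\omega(\tilde\rho)/\beta$ a.e.\ together with $\Omega(\rho)=\int_0^\rho\omega(s)^{-1}\,ds\to\infty$ forces $\tilde\rho\leq M$, whence $C(Jf(q(t)))\geq\beta/\omega(M)$ and $D^-_{q(t)}f\geq\beta/\omega(M)$ along the lift. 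This Osgood-type lemma is precisely the mechanism that the weighted version of the cited theorem runs internally (compare Lemma 4.5 of \cite{GutuJaramillo} and Remark \ref{remarkintegralcondition}), so you are re-proving something the paper outsources; what you gain is self-containedness --- only the unweighted form of Theorem 5.2, already used for Theorem \ref{covering1}, is needed --- and your proof makes visible exactly where the defining property of a weight, $\int_0^\infty\omega(\rho)^{-1}\,d\rho=\infty$, enters, which the paper's two-line argument leaves hidden. Your closing caveats (local absolute continuity of $\tilde\rho$, the a.e.\ chain rule through the possibly discontinuous nondecreasing $\omega$, and the fact that $q$ factors through the arclength reparametrization of $p$ because $f$ is locally injective by Theorem \ref{inversemapping}) identify the right technical points, and each is handled by standard arguments; the final estimate $\lip f^{-1}(y|x)\leq C(Jf(x))^{-1}$ via \eqref{igualdadcovlip} and Theorem \ref{lemmaprincipal} is identical to the paper's.
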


\begin{proof}
Suppose that $f$ is a weighted strong submersion for the weight $\omega$. We argue as in the proof of  Theorem \ref{covering1}, but instead of $(iii)$, we can show with the same arguments that:{\it  $f$ has the  bounded path lifting property for rectifiable paths with respect to the weight $\omega$}. Namely, for every rectifiable path  $p:[0,1]\rightarrow Y$ in $Y$, every $b\in (0,1]$ and every $q:[0,b)\rightarrow X$ such that $f\circ q = p$ over $[0,b)$, there exists $x^*\in X$ and $\beta>0$ such that $\inf\{D_x^-f \cdot \omega(d_X(x,x^*)): x \mbox{ in the image of } q\}\geq\beta.$ Therefore,  again using Theorem  5.2 of \cite{GutuJaramillo} $f$ is a covering map. As before, by  \eqref{lemmaprincipal} we have that $\lip f^{-1}(y|x)\leq {C(Jf(x))}^{-1}$.
\end{proof}

\begin{remark}\label{remarkintegralcondition}{\rm Let $x^*$ some point fixed in $X$. Reasoning as in the proof of Lemma 4.5 in \cite{GutuJaramillo}, we can verify that there exists
 a {\it weight} $\omega$ such that $C(Jf(x))\omega(d_X(x^*,x))\geq 1$  if and only if the following integral condition holds
\begin{equation}\label{integralcondition}\int_0^\infty \inf_{d_X(x,x^*)\leq \rho} C(Jf(x))d\rho=\infty.\end{equation} In particular, {\it if a metrically regular local homeomorphism satisfies the Hadamard integral condition then it is a (weighted) strong submersion}. Actually if $f:X\rightarrow Y$ is a local diffeomorphism  between Banach spaces, $f$ is a global diffeomorphism if and only if it is a strong submersion \cite{rabier, gutuchang}. Furthermore, in this context, the Hadamard's integral condition implies coercivity, namely $\lim_{|x|\rightarrow\infty} |f(x)|\rightarrow\infty$. But in infinite-dimensional setting there are non-coercive global diffeormophisms between Banach spaces.}
\end{remark}

The following result is a direct consequence of Remark \ref{remarkintegralcondition},  Theorem \ref{coveringweighted} and the fact that if $f:X\rightarrow Y$ is a covering map with $X$ path-connected and $Y$ simply-connected then it is a homeomorphism.

\begin{corollary}{\bf [Hadamard Theorem]}\label{hadamardcondition}
Let $X$ and $Y$  be two $C^1$ Finsler manifolds, where $X$ is complete and $Y$ is simply-connected and has smooth norm.  Let $f:X\rightarrow Y$ be a locally Lipschitz map, and suppose that $Jf$ is a strong pseudo-Jacobian for $f$, which is $Jf$-regular  at every point in $X$. Consider some fixed point  $x^*\in X$ and some weight $\omega$, and assume that the following integral condition holds
$$
\int_0^\infty \inf_{d_X(x,x^*)\leq \rho} C(Jf(x))d\rho=\infty.
$$
Then $f$ is a global homeomorphism, the inverse map $f^{-1}$ is locally Lipschitz and $\lip f^{-1}(f(x))\leq \omega(d_X(x,x^*)).$
\end{corollary}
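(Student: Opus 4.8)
The plan is to treat this as a direct corollary, chaining together the results already established in the preceding sections. First I would use Remark \ref{remarkintegralcondition} to translate the integral hypothesis into a usable pointwise inequality: the divergence of $\int_0^\infty \inf_{d_X(x,x^*)\leq \rho} C(Jf(x))\,d\rho$ is equivalent to the existence of a weight $\omega$ satisfying $C(Jf(x))\,\omega(d_X(x^*,x))\geq 1$ for every $x\in X$, and this is precisely the weight $\omega$ appearing in the statement. With this bound in hand, along any sequence $\{x_n\}$ in $X$ the product $C(Jf(x_n))\,\omega(d_X(x^*,x_n))$ stays $\geq 1$ and therefore cannot converge to zero; hence $f$ is a weighted strong submersion.

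Next I would invoke Theorem \ref{coveringweighted}. Its hypotheses are now all met: $X$ is complete, $Y$ has smooth norm, $f$ is locally Lipschitz, $Jf$ is a strong pseudo-Jacobian that is $Jf$-regular at every point, and by the previous step $f$ is a weighted strong submersion. The theorem then yields that $f$ is a covering map and that, for every $y\in Y$ and every $x\in f^{-1}(y)$, the set-valued inverse $f^{-1}$ has the pseudo-Lipschitz property around $(y,x)$ with $\lip f^{-1}(y|x)\leq C(Jf(x))^{-1}$.

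The third step upgrades the covering to a homeomorphism. Every (connected) Finsler manifold is connected by $C^1$-smooth paths, so $X$ is path-connected; combining this with the hypothesis that $Y$ is simply-connected, the standard covering-space fact quoted just before the statement shows that $f$ is a global homeomorphism. In particular $f^{-1}$ becomes single-valued, so its pseudo-Lipschitz property reduces to genuine local Lipschitz continuity and the estimate of the previous step reads $\lip f^{-1}(f(x))\leq C(Jf(x))^{-1}$.

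Finally, to reach the weighted estimate I would simply combine this bound with the pointwise inequality of the first step: from $C(Jf(x))\,\omega(d_X(x^*,x))\geq 1$ we obtain $C(Jf(x))^{-1}\leq \omega(d_X(x^*,x))$, whence $\lip f^{-1}(f(x))\leq \omega(d_X(x,x^*))$, as claimed. The only point demanding care is the very first translation from the integral condition to the weight $\omega$, which rests on the argument of Lemma 4.5 in \cite{GutuJaramillo} cited in Remark \ref{remarkintegralcondition}; all the remaining steps are direct applications of the machinery of the preceding sections, so I expect no substantial obstacle.
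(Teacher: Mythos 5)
Your proposal is correct and follows precisely the route the paper itself takes: it justifies this corollary in one line as a direct consequence of Remark \ref{remarkintegralcondition} (converting the integral condition into the pointwise bound $C(Jf(x))\,\omega(d_X(x^*,x))\geq 1$, hence into the weighted strong submersion property), Theorem \ref{coveringweighted}, and the covering-space fact that a covering of a simply-connected space by a path-connected space is a homeomorphism. Your additional care in deriving the final estimate $\lip f^{-1}(f(x))\leq \omega(d_X(x,x^*))$ from $\lip f^{-1}(y|x)\leq C(Jf(x))^{-1}$ is exactly what the paper leaves implicit, so there is no gap.
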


\begin{remark}{\bf[Estimate of the domain of invertibility]}  {\rm Let $X$ and $Y$  be two $C^1$ Finsler manifolds, where $X$ is complete and $Y$ has smooth norm, and  let $f:X\rightarrow Y$ be a locally Lipschitz map. Suppose that $Jf$ is a strong pseudo-Jacobian for $f$, which is $Jf$-regular  at every point in $X$, and consider  some point fixed in $x^* \in X$. For every $r>0$ set:
\begin{equation}\label{rho}\varrho(r) = \int_0^r \ \inf_{d_X(x,x^*)\leq \rho} C(Jf(x)) d\rho.\end{equation}
Then:
$$B_Y(f(x^*);\varrho(r))\subset f(B_X(x^*;r)).$$
Indeed, If $\varrho(r)=0$ the above inclusion holds trivially. Now if $\varrho(r)>0$, by  Theorem \ref{inversemapping} (our Inverse Mapping Theorem) $f$ is a metrically regular local homeomorphism. Set
  $$\mu(\rho) = \inf_{d_X(x,x^*)\leq \rho} D_x^-f.$$
  As in proof of Theorem \ref{covering1} we have that $D_x^-f \geq C(Jf(x))$. Therefore for $x$ in the  ball $B(x^*; \rho)$:
  $$D_x^-f \geq C(Jf(x)) \geq  \inf_{d_X(x,x^*)\leq \rho} C(Jf(x)).$$
  Then $\mu(\rho)\geq\frac{1}{\omega(\rho)}$ so  $\xi(r) := \int_0^r \mu(\rho) d\rho \geq \varrho(r).$ By Theorem 6 in \cite{GaGuJa} we have:
 $$ B_Y(f(x^*);\varrho(r)) \subset B_Y(f(x^*);\xi(r))\subset f(B_X(x^*;r)).$$\\}
\end{remark}

\begin{corollary}{\bf [Global metric regularity]}\label{globalregularity}
Let $X$ and $Y$  be two $C^1$ Finsler manifolds, where $X$ is complete and $Y$ has smooth norm, and let $f:X\rightarrow Y$ be a locally Lipschitz  map. suppose that $Jf$ is a strong pseudo-Jacobian for $f$, which is $Jf$-regular  at every point in $X$. Suppose that there exists $\alpha>0$ such that:
$$
C(Jf(x)) \geq \alpha >0 \hspace{1cm}\mbox{for all }x\in X.
$$
Then, for every $r>0$:
\begin{equation}\label{globalregularity}B_Y(f(x);\alpha r)\subset f(B_X(x;r)).\end{equation}
Furthermore, $f$ is  a covering map, and the set-valued inverse map $f^{-1}$ has the  pseudo-Lipchitz property around $(y,x)$ for every  $y\in Y$ and $x\in f^{-1}(y)$ with
$$\lip f^{-1}(y|x)\leq \alpha^{-1}.$$
\end{corollary}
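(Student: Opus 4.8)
The plan is to derive both conclusions by specializing results already in hand to the uniform bound $C(Jf(x))\ge\alpha$, so that essentially no new machinery is needed.

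First I would obtain the inclusion \eqref{globalregularity}. Fixing an arbitrary $x\in X$ and taking it as the base point in the Estimate of the domain of invertibility (the preceding Remark), I observe that for every $\rho\ge 0$ the uniform bound gives $\inf_{d_X(z,x)\le\rho}C(Jf(z))\ge\alpha$, so the quantity $\varrho$ from \eqref{rho} satisfies $\varrho(r)=\int_0^r\inf_{d_X(z,x)\le\rho}C(Jf(z))\,d\rho\ge\int_0^r\alpha\,d\rho=\alpha r$ for every $r>0$. Monotonicity of balls then yields $B_Y(f(x);\alpha r)\subset B_Y(f(x);\varrho(r))\subset f(B_X(x;r))$, which is precisely the claimed inclusion. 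Since $x$ was arbitrary, this holds at every point of $X$.

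For the covering assertion I would verify that the uniform bound forces $f$ to be a weighted strong submersion, using the constant weight $\omega\equiv\alpha^{-1}$. This map is (trivially) nondecreasing and strictly positive, and $\int_0^\infty\omega(\rho)^{-1}\,d\rho=\int_0^\infty\alpha\,d\rho=\infty$, so it qualifies as a weight. For any base point $x^*\in X$ and any sequence $\{x_n\}$ one has $C(Jf(x_n))\,\omega(d_X(x^*,x_n))\ge\alpha\cdot\alpha^{-1}=1$, which can never tend to $0$; hence no sequence of the forbidden type exists and $f$ is a weighted strong submersion. Theorem \ref{coveringweighted} then gives at once that $f$ is a covering map and that $f^{-1}$ has the pseudo-Lipschitz property around each $(y,x)$ with $\lip f^{-1}(y|x)\le C(Jf(x))^{-1}$. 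Since $C(Jf(x))\ge\alpha$ implies $C(Jf(x))^{-1}\le\alpha^{-1}$, the stated bound $\lip f^{-1}(y|x)\le\alpha^{-1}$ follows.

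I do not expect a genuine obstacle here: the statement is a uniform specialization of Theorem \ref{coveringweighted} together with the domain-of-invertibility estimate. The only points that require care are checking that the constant map $\omega\equiv\alpha^{-1}$ satisfies the precise definition of a weight, and recognizing that the fixed base point $x^*$ in the estimate may be taken to be the very point $x$ at which \eqref{globalregularity} is asserted; both are routine.
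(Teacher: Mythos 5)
Your proposal is correct and follows exactly the route the paper intends for this corollary (which it states without an explicit proof): the uniform bound $C(Jf(x))\ge\alpha$ turns the Remark's estimate $\varrho(r)=\int_0^r\inf_{d_X(z,x)\le\rho}C(Jf(z))\,d\rho\ge\alpha r$ into the ball inclusion, and rules out any sequence violating the (weighted) strong submersion condition, so Theorem \ref{coveringweighted} yields the covering property and the bound $\lip f^{-1}(y|x)\le C(Jf(x))^{-1}\le\alpha^{-1}$. The only cosmetic simplification available is that the uniform bound already makes $f$ a strong submersion outright, so you could invoke Theorem \ref{covering1} directly without introducing the constant weight $\omega\equiv\alpha^{-1}$; this changes nothing of substance.
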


\begin{theorem}{\bf [Characterizing metrically regular homeomorphisms]}\label{characterizationii}
Let $X$ and $Y$  be two $C^1$ Finsler manifolds, where $X$ is complete and $Y$ is simply connected and has smooth norm. Let $f:X\rightarrow Y$ be a locally Lipschitz map, and suppose that $Jf$ is a strong pseudo-Jacobian for $f$, which is $Jf$-regular  at every point in $X$.  Then, the following statements are equivalent:
\begin{enumerate}
\item[$(1)$]  $f$ is a global homeomorphism from $X$ onto $Y$.
\item[$(2)$]  for each compact set $K\subset Y$ there exists $\alpha_K>0$ such that $C(Jf(x))\geq \alpha_K$, for all $x\in f^{-1}(K)$.
\item[$(3)$]  $f$ is a strong submersion.
\item[$(4)$]  $f$ is a weighted strong submersion.
\item[$(5)$]  $f$ is a global homeomorphism from $X$ onto $Y$ with locally Lipschitz continuous inverse.
\item[$(6)$]  $f$ is a proper map.
\end{enumerate}
Furthermore, if any of these criteria are satisfied then for every $y\in Y$:
\begin{equation}\label{desigualdad}
\lip f^{-1}(y)  \leq  C(df(f^{-1}(y)))^{-1}.
\end{equation}
\end{theorem}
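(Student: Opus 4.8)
The plan is to establish the single cycle of implications
$(1)\Rightarrow(6)\Rightarrow(2)\Rightarrow(3)\Rightarrow(4)\Rightarrow(5)\Rightarrow(1)$, which forces all six statements to be equivalent, and to read the closing estimate off the covering theorems once $f$ is known to be a homeomorphism. Several links are formal. If $f$ is a global homeomorphism then $f^{-1}$ is continuous, so $f^{-1}(K)$ is compact whenever $K$ is, giving $(1)\Rightarrow(6)$. For $(2)\Rightarrow(3)$, were $f$ not a strong submersion there would be a sequence $x_n$ with $f(x_n)\to y$ and $C(Jf(x_n))\to 0$; then $K=\{f(x_n):n\}\cup\{y\}$ is compact with $x_n\in f^{-1}(K)$, contradicting the uniform lower bound $C(Jf(x))\geq\alpha_K$ of $(2)$. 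The implication $(3)\Rightarrow(4)$ is immediate on taking the constant weight $\omega\equiv 1$, which is admissible since $\int_0^\infty 1\,d\rho=\infty$ and for which the weighted condition reduces exactly to the strong-submersion condition. For $(4)\Rightarrow(5)$ I invoke Theorem \ref{coveringweighted}: under our standing hypotheses a weighted strong submersion is a covering map with $\lip f^{-1}(y|x)\leq C(Jf(x))^{-1}$; since $X$ is connected and $Y$ simply connected, this covering is a homeomorphism, and the $\lip$ bound makes its inverse locally Lipschitz, which is $(5)$. Finally $(5)\Rightarrow(1)$ is trivial.

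The only substantial step is $(6)\Rightarrow(2)$, whose core is the lower semicontinuity of $x\mapsto C(Jf(x))$. I would prove this directly from the definition. Writing $g_R(x)=\inf\{C_{\mathrm{Finsler}}(T):T\in\mathrm{co}\,Jf(B_X(x;R))\}$, the map $g_R$ is nonincreasing in $R$ (larger balls enlarge $\mathrm{co}\,Jf(B_X(x;R))$ and hence lower the infimum), and $C(Jf(x))=\sup_{R>0}g_R(x)$. Given $x_n\to x_*$ and $R>0$, the triangle inequality gives $B_X(x_n;R/2)\subset B_X(x_*;R)$ for large $n$, so $\mathrm{co}\,Jf(B_X(x_n;R/2))\subset\mathrm{co}\,Jf(B_X(x_*;R))$ and therefore $C(Jf(x_n))\geq g_{R/2}(x_n)\geq g_R(x_*)$; letting $n\to\infty$ and then $R\to0$ yields $\liminf_n C(Jf(x_n))\geq C(Jf(x_*))$.

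Granting lower semicontinuity, $(6)\Rightarrow(2)$ is immediate: for compact $K$ the set $f^{-1}(K)$ is compact by properness, a lower semicontinuous function attains its infimum on a compact set, and that infimum equals $C(Jf(x_0))>0$ for some $x_0\in f^{-1}(K)$ by $Jf$-regularity (the case $f^{-1}(K)=\emptyset$ being vacuous), so one takes $\alpha_K=C(Jf(x_0))$. For the final inequality, once any equivalent condition holds $f$ is a homeomorphism, hence $f^{-1}(y)$ is the single point $x$ and $f^{-1}$ is single-valued, so Theorem \ref{coveringweighted} gives $\lip f^{-1}(y)=\lip f^{-1}(y|x)\leq C(Jf(x))^{-1}=C(Jf(f^{-1}(y)))^{-1}$.

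I expect the verification of lower semicontinuity to be the main obstacle, and within it the delicate point is that $C_{\mathrm{Finsler}}$ at $x_n$ and at $x_*$ is measured with the Finsler norms at the \emph{moving} base points $T_{x_n}X,\,T_{f(x_n)}Y$ rather than at a fixed point. To handle this I would fix a chart about $x_*$, pass to the equivalent index $C(J\mathbf{f}(\mathbf{x}))$ of Proposition \ref{metricallyregularindex}, and use the second Finsler-structure axiom (together with continuity of $f$, so that $f(x_n)\to f(x_*)$) to conclude that the norms $\|\cdot\|_{x_n,\varphi}$, $\|\cdot\|_{f(x_n),\psi}$ and their duals are $(1+\varepsilon)$-comparable to those at $x_*$ near $x_*$. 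The inclusion estimate above then holds up to a factor $(1+\varepsilon)^{-2}$, and letting $\varepsilon\to0$ preserves the conclusion; all remaining implications are either formal or direct appeals to Theorem \ref{covering1}/\ref{coveringweighted}.
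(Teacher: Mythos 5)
Your proof is correct, but it is organized around a different cycle than the paper's, and the difference is substantive. The paper proves $(1)\Rightarrow(2)\Rightarrow(3)\Rightarrow(4)\Rightarrow(5)\Rightarrow(6)\Rightarrow(1)$, closing the loop with the implication ``proper $\Rightarrow$ global homeomorphism'', which it obtains by observing that $f$ is a local quasi-isometry and invoking Theorem 5.2 of \cite{GutuJaramillo} a second time; you instead run $(1)\Rightarrow(6)\Rightarrow(2)\Rightarrow(3)\Rightarrow(4)\Rightarrow(5)\Rightarrow(1)$, so properness enters only through the trivial compactness of $f^{-1}(K)$, and the external covering machinery is needed only once, inside Theorem \ref{coveringweighted}. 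Both routes hinge on the lower semicontinuity of $x\mapsto C(Jf(x))$ (the paper uses it for $(1)\Rightarrow(2)$, you for $(6)\Rightarrow(2)$), but the paper merely asserts this property without proof, whereas you supply an argument: the monotone ball-inclusion estimate $C(Jf(x_n))\geq g_{R/2}(x_n)\geq g_R(x_*)$, together with the correct observation that the Banach constants at $x_n$ and $x_*$ are measured in Finsler norms at moving base points, which you repair by passing to a fixed chart via Proposition \ref{metricallyregularindex} and using the second Finsler axiom to get $(1+\varepsilon)^{2}$-comparability of the dual norms before letting $\varepsilon\to 0$. The remaining links --- the compact-set contradiction for $(2)\Rightarrow(3)$, the constant weight $\omega\equiv 1$ for $(3)\Rightarrow(4)$, Theorem \ref{coveringweighted} plus simple connectedness of $Y$ for $(4)\Rightarrow(5)$, and the identification $\lip f^{-1}(y)=\lip f^{-1}(y|x)\leq C(Jf(x))^{-1}$ for the final estimate --- coincide with the paper's. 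Net effect: your proof is slightly more self-contained (one fewer appeal to \cite{GutuJaramillo}, and an actual proof of the semicontinuity the paper takes for granted), at the cost of having to confront the chart-dependence of the regularity index head-on; the paper's ordering hides that issue inside an unproved claim.
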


\begin{proof}
Suppose first that $(1)$ holds and  $K\subset Y$  is compact. Since $f^{-1}$ is a continuous map, then $f^{-1}(K)$ is compact in $X$. Since the map $x\mapsto C(J(x))$
is lower semi-continuous on $X$, it attains its minimum on $f^{-1}(K)$.  But, by regularity,   $C(Jf(x))>0$ for all $x\in X$, so there is $\alpha_K>0$ such that $C(Jf(x))\geq \alpha_K$, for all $x\in f^{-1}(K)$. Therefore (2) is fulfilled. To prove that (2) implies (3) consider a sequence $\{x_n\}$ in $X$ with $f(x_n)\rightarrow y\in Y$ and such that $C(Jf(x_n))\rightarrow 0$. Now set $K=\{f(x_n)\}_n\cup \{y\}$, and then condition (2)providesa contradiction. On the other hand, (3) implies (4) is obvious, one just needs to consider $\omega \equiv 1$. By Theorem
\ref{coveringweighted} $f$ is a covering map and since $Y$ is simply connected, in fact it is a global homeomorphism and the inverse mapping  $f^{-1}$   is a locally Lipschitz continuous mono-valued mapping such that
inequality \eqref{desigualdad} holds.  (5) implies (6) is trivial.To conclude the proof, it is enough to observe that $f$ is a local quasi-isometric map,  if $f$ is proper map then  by  Theorem 5.2 of \cite{GutuJaramillo},  $f$ is a global homeomorphism.
\end{proof}


\section{ \bfseries\sffamily\large Lipschitz perturbations}

In this Section we will study the global invertibility of ``perturbed" maps of the form $F(x)=\sigma(f(x),g(x))$, where $f:X\to Y$ is globally invertible, $g:X \to Y$ has small local Lipschitz constant and $\sigma: Y\times Y\rightarrow Y$ is a map with suitable properties, which mimics the sum of functions in the context of manifolds.

\begin{theorem}{\bf [Global inversion under Lipschitz perturbation]} \label{lipschitzperturbation}
Let $X$ and $Y$  be two $C^1$ complete Finsler manifolds, where $Y$ is simply connected and has smooth norm, and let  $f,g:X\rightarrow Y$ be locally Lipschitz maps. Suppose that $Jf$ is a strong pseudo-Jacobian for $f$, which is $Jf$-regular at every point in $X$, and there exist a point $x^*\in X$ and a weight $\omega$  such that:
\begin{equation}\label{ecuacionlipschitzperturbationversion2}
C(Jf(x)) - \lip g(x) \geq \frac{1}{\omega(d_X(x,x^*))}
\end{equation}
Let $\sigma: Y\times Y\rightarrow Y$ be a map such that:
\begin{enumerate}
\item[$\cdot$]  $\sigma(y,z)=\sigma(z,y)$   for all $y,z\in Y$ ({\it symmetry});
\item[$\cdot$]  for every $y,z\in Y$ there is a  neighborhood $W$ of $y$, depending on $y$ and $z$, such that $d_Y( \sigma(y_1,z), \sigma(y_2,z)) =  d_Y(y_1,y_2)$ for all $y_1,y_2\in W$ ({\it local isometry}).
\end{enumerate}
For every $r>0$ set \begin{equation}\label{varrho}\varrho(r) = \int_0^r \frac{1}{\omega(\rho)}d\rho.\end{equation}
Then the map $F:X \to Y$ defined by  $F(x)=\sigma(f(x),g(x))$ satisfies:
\begin{enumerate}
\item[$(1)$]  $F$ is a global homeomorphism from $X$ onto $Y$ with locally Lipschitz inverse $F^{-1}$ such that for each $y\in Y$,
\begin{equation}\label{inequalitylip}\lip F^{-1}(y)\leq \omega(d_X(x^*, F^{-1}(y))).\end{equation}
\item[$(2)$] For every $r>0$, $B_Y(F(x^*);\varrho(r))\subset F(B_X(x^*;r)).$
\end{enumerate}
\end{theorem}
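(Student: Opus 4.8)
The plan is to reduce the statement to the metric global inversion machinery already used in the proofs of Theorems \ref{covering1} and \ref{coveringweighted}, by controlling the lower scalar Dini derivative $D_x^-F$ of the perturbed map. First note that $F$ is locally Lipschitz: the maps $f,g$ are locally Lipschitz and, by the local isometry hypothesis, $\sigma$ is locally Lipschitz (indeed a local isometry in each variable), so $x\mapsto\sigma(f(x),g(x))$ is locally Lipschitz. Thus it suffices to prove the pointwise estimate
$$
D_x^-F\ \geq\ C(Jf(x))-\lip g(x)\ \geq\ \frac{1}{\omega(d_X(x,x^*))}
$$
for every $x\in X$, where the last inequality is exactly hypothesis \eqref{ecuacionlipschitzperturbationversion2}.

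The heart of the argument is the comparison estimate
$$
d_Y(F(u),F(u'))\ \geq\ d_Y(f(u),f(u'))-d_Y(g(u),g(u'))
$$
valid for $u,u'$ in a small enough neighborhood of a fixed point $x$. To obtain it I would insert the intermediate point $\sigma(f(u),g(u'))$ and use the reverse triangle inequality, then apply the symmetry of $\sigma$ to put each of the two resulting terms in the form $d_Y(\sigma(\cdot,z),\sigma(\cdot,z))$ with a fixed second slot $z\in\{g(u'),f(u)\}$, so that the local isometry property turns them into $d_Y(f(u),f(u'))$ and $d_Y(g(u),g(u'))$ respectively. The delicate point --- and what I expect to be the main obstacle --- is that the isometry neighbourhood furnished by the hypothesis depends on both the base point $y$ and the frozen argument $z$; since $z$ itself moves with $u,u'$, one must argue that for $u,u'$ close to $x$ all the points $f(u),f(u'),g(u),g(u')$ simultaneously fall inside the relevant neighbourhoods. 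This is handled by freezing the second slot at the base values $f(x),g(x)$, whose isometry neighbourhoods are fixed, and absorbing the motion of $z$ into a higher order term via the continuity of $f,g$; equivalently, one may pass to charts around $f(x),g(x)$ and $F(x)$ and, using Lemma \ref{lemaequivalencia1}, reduce the computation to the model situation where $\sigma$ behaves like addition and the estimate is exact up to factors $(1+\varepsilon)$.

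Granting the comparison estimate, the pointwise bound follows at once: dividing by $d_X(u,u')$ with $u'=x$ and letting $u\to x$, the inequality $\liminf(a-b)\ge\liminf a-\limsup b$ gives $D_x^-F\ge D_x^-f-\lip g(x)$; Theorem \ref{localinjectivity} yields $D_x^-f\ge C^*(Jf(x))$, and $Jf$-regularity gives $C^*(Jf(x))=C(Jf(x))$ by \eqref{inversioncondition}, while the definition of $\lip g(x)$ gives $\limsup_{u\to x}d_Y(g(u),g(x))/d_X(u,x)\le\lip g(x)$. The same two-point estimate, combined with the two-point conclusion of Theorem \ref{localinjectivity} for $f$ and the Lipschitz bound for $g$, shows that $F$ is locally a bi-Lipschitz homeomorphism with lower constant arbitrarily close to $C(Jf(x))-\lip g(x)>0$, so $F$ is a local quasi-isometric map. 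I would then run the same checklist as in the proof of Theorem \ref{coveringweighted} for Theorem 5.2 of \cite{GutuJaramillo}: $Y$ is locally $\mathcal R$-contractible (being Finsler), $F$ is a local quasi-isometry, and $F$ has the bounded path-lifting property with respect to the weight $\omega$, since along any partial lift $D_x^-F\cdot\omega(d_X(x,x^*))\ge1$. Hence $F$ is a covering map; as $X$ is path-connected and $Y$ is simply connected, $F$ is a global homeomorphism, and its inverse is locally Lipschitz with $\lip F^{-1}(y)\le(C(Jf(x))-\lip g(x))^{-1}\le\omega(d_X(x^*,x))$ for $x=F^{-1}(y)$, which is \eqref{inequalitylip}. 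This proves $(1)$.

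Finally, part $(2)$ follows exactly as in the Remark on the domain of invertibility: setting $\mu(\rho)=\inf_{d_X(x,x^*)\le\rho}D_x^-F$, the pointwise bound gives $\mu(\rho)\ge\inf_{d_X(x,x^*)\le\rho}1/\omega(d_X(x,x^*))=1/\omega(\rho)$, so $\xi(r):=\int_0^r\mu(\rho)\,d\rho\ge\int_0^r d\rho/\omega(\rho)=\varrho(r)$ by \eqref{varrho}. Since $F$ is a metrically regular local homeomorphism, Theorem 6 of \cite{GaGuJa} gives $B_Y(F(x^*);\xi(r))\subset F(B_X(x^*;r))$, whence $B_Y(F(x^*);\varrho(r))\subset B_Y(F(x^*);\xi(r))\subset F(B_X(x^*;r))$.
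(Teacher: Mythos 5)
Your overall architecture (two-point comparison estimate, Dini-derivative bound, the covering theorem of \cite{GutuJaramillo}, simple connectedness of $Y$, and Theorem 6 of \cite{GaGuJa} for part $(2)$) mirrors the paper's proof, but there is a genuine gap at the decisive step: you never prove that $F$ is an \emph{open} map. What your comparison estimate gives is a local lower bound $d_Y(F(u),F(u'))\geq c\, d_X(u,u')$ with $c$ close to $C(Jf(x))-\lip g(x)$, i.e.\ local injectivity with a bi-Lipschitz estimate onto the image. On infinite-dimensional manifolds this does not make $F$ a local homeomorphism: a bi-Lipschitz injection need not have open image (a linear isometric embedding onto a proper closed subspace satisfies two-sided Lipschitz bounds, has $D^-_xF\equiv 1$, and trivially satisfies the continuation/path-lifting conditions, yet is not open and not a covering). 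For this reason Theorem 5.2 of \cite{GutuJaramillo} cannot be applied on the strength of the bi-Lipschitz estimate alone --- it is a statement about local homeomorphisms, and without openness its conclusion would be false by the embedding example. Likewise your part $(2)$ opens with ``since $F$ is a metrically regular local homeomorphism'', but the metric regularity of $F$ is precisely what has not been established anywhere in your argument.

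The paper devotes the entire first half of its proof (``Local inversion'') to exactly this point, using an ingredient absent from your proposal: Theorem 3.8 of \cite{durea}. From \eqref{ecuacionlipschitzperturbationversion2} and Theorem \ref{lemmaprincipal} one gets $\cov f(x)\geq C(Jf(x))>\lip g(x)$, and the composition/perturbation theorem of Durea et al.\ then yields
$$0<\reg F(x)\leq \frac{\reg f(x)}{1-\reg f(x)\,\lip g(x)},$$
so that $F=\sigma(f,g)$ is metrically regular, i.e.\ open with linear rate, around every point; combined with the local injectivity you did prove, this makes $F$ a genuine local homeomorphism, and only then does the covering machinery (and Theorem 6 of \cite{GaGuJa}) apply. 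This openness step is a Graves--Lyusternik-type perturbation result and cannot be recovered from pointwise Dini estimates. A secondary remark: the ``delicate point'' you raise about the isometry neighbourhood of $\sigma(\cdot,z)$ moving with $z$ is real (the paper glosses over it as well), but your proposed fix is misdescribed --- the error incurred by moving the frozen slot from $g(x)$ to $g(u)$ equals $d_Y(g(u),g(x))$, which is of the same order as the main term, not higher order; it is in fact exactly the subtracted term in the comparison estimate.
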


\begin{proof}
\mbox{}\\
{\it Local inversion.} First we are going to prove that $F$ is a local homeomorphism metrically regular at every point $x\in X$. Let $x\in X$ be fixed. Combinig inequality \eqref{ecuacionlipschitzperturbationversion2} with Lemma \ref{lemmaprincipal} and the fact that
$\reg f(x) = (\cov f(x))^{-1}$, we have that:
$$\cov f(x)\geq C(Jf(x)) >  \lip g(x).$$ So we are under conditions of Theorem 3.8 in  \cite{durea}. Then we have:
\begin{equation}\label{regsuma}
0< \reg F(x) = \reg \sigma(f,g)(x)\leq\frac{\reg f(x)}{1-\reg f(x) \lip g(x)}.
\end{equation}
Therefore $F$ is metrically regular around $x$.    Since $\sigma$ is a local isometry, for $u,u'$ near $x$:
\begin{eqnarray*}
 d_Y( f(u) , f(u') ) & =&  d_Y( \sigma (f(u),g(u)), \sigma (f(u'),g(u))) \\
                           & \leq &  d_Y( \sigma (f(u),g(u)),  F(u')) +  d_Y( F(u'),  \sigma(f(u'),g(u)))\\
                            &=&  d_Y( F(u),F(u') ) + d_Y( g(u'),g(u) )\\
\end{eqnarray*}
Set $\kappa :=\reg f(x)$ and $\mu := \lip g(x)$. By  Theorem \ref{inversemapping} applied to $f$ (our Inverse Mapping Theorem) there is a neighborhood $U$ of $x$ such that for every $u,u'\in U$:
\begin{eqnarray*}
d_Y(F(u),F(u')) &\geq& d_Y(f(u), f(u'))-d_Y(g(u), g(u'))\\
                         &\geq& \frac{1}{\kappa} d_X(u, u')-d_Y(g(u), g(u'))
\end{eqnarray*}
Now let $\delta > 0$ such that $0<\mu < \mu + \delta < \frac{1}{\kappa}$. Then there is $R>0$ such that for all $u,u'\in B_X(x; R)$, $u\neq u'$:
$$\frac{d_Y(g(u),g(u'))}{d_X(u,u')}\leq \mu + \delta.$$
Therefore, for every  $u,u' \in B_X(x; R)\cap U$we have:
\begin{eqnarray*}
d_Y(F(u),F(u'))  &\geq& \frac{1}{\kappa} d_X(u, u')- (\mu+\delta)d_X(u, u')\\
                          &=& \left(\frac{1}{\kappa} - (\mu+\delta)\right) d_X(u, u')
\end{eqnarray*}
with $\frac{1}{\kappa} - (\mu+\delta)>0$. As a consequence, we obtain that  $F$ is locally injective. Then $F$ is a local homeomorphism,  metrically regular at every $x\in X$.\\

\noindent {\it Global properties.} For every fixed $x\in X$,  by the previous  inequality  we have that for every $\delta>0$,
$D_x^-F\geq\frac{1}{\kappa}-\mu-\delta$. Therefore, from \eqref{ecuacionlipschitzperturbationversion2} we get the global condition:
$$D_x^-F\geq \frac{1}{\kappa}-\mu = C(Jf(x))-\lip g(x)\geq \frac{1}{\omega(d_X(x,x^*))}.$$
As before, by Theorem  5.2 of \cite{GutuJaramillo} $f$ is a covering map, and since $Y$ is simply connected, $F$ is actually a global homeomorphism, metrically regular at every point with with locally Lipscthitz  continuous inverse $F^{-1}$ such that for all $y\in Y$:
\begin{equation}\label{inequalitychida}
\lip F^{-1}(y)\leq \omega(d_X(x^*, F^{-1}(y))).
\end{equation}
On the other hand, set $\mu(\rho) := \inf_{d_X(x,x^*)\leq \rho} D_x^-F$. Therefore for $x$ such that $d_X(x,x^*)\leq\rho$:
  $$D_x^-F\geq C(Jf(x))- \lip g(x) \geq \frac{1}{\omega(d_X(x,x^*))}\geq\frac{1}{\omega(\rho)}.$$
  Then $\mu(\rho)\geq\frac{1}{\omega(\rho)}$ so  $\xi(r) := \int_0^r \mu(\rho) d\rho \geq \varrho(r).$ Finally, by Theorem 6 in \cite{GaGuJa}:
 $$ B_Y(F(x^*);\varrho(r)) \subset B_Y(F(x^*);\xi(r))\subset F(B_X(x^*;r)).$$
\end{proof}

\begin{remark}
{\rm Theorem \ref{lipschitzperturbation} can be made more general, by considering a map $\sigma: Y\times Y\rightarrow Y$  with the symmetric property, but instead of being a local isometry, with the property that for every $y,z\in Y$ and every $\varepsilon>0$ there is a  neighborhood $W$ of $y$, depending on $y$ and $z$, such that for all $y_1,y_2\in W$:
$$\frac{1}{1+\varepsilon} d_Y(y_1,y_2) \leq  d_Y( \sigma(y_1,z), \sigma(y_2,z)) \leq (1+\varepsilon) d_Y(y_1,y_2).$$
Under this assumption,  Theorem 3.8 in   \cite{durea} also guarantees the metric regularity of $F=\sigma(f,g)$ at every point $x\in X$. Note  that for every $x\in X$ and $\varepsilon>0$ there is a neighborhood of $x$ such that for every $u,u'$ therein
$$d_Y(f(u),f(u'))\leq (1+\varepsilon) d_Y( F(u),F(u') ) + (1+\varepsilon)^2 d_Y( g(u'),g(u) ).$$
Therefore, reasoning as in the proof of Theorem  \ref{lipschitzperturbation}, for $u,u'$ near $x$,  we get the inequality:
$$
d_Y(F(u),F(u')) \geq  \left(\frac{1}{\kappa(1+\varepsilon)} - (\mu+\delta)(1+\varepsilon)\right) d_X(u, u').
$$ So, $F$ is locally injective. The rest of the proof is similar.}
\end{remark}

\begin{corollary}
Let $X$  be a $C^1$ complete Finsler manifold, let $E$ be a Banach space with smooth norm, and let  $f,g:X\rightarrow E$ be locally Lipschitz maps.  Suppose that $Jf$ is a strong pseudo-Jacobian for $f$, which is $Jf$-regular at every point of $X$, and there exist a point $x^*\in X$ and a weight $\omega$such that inequality
\eqref{ecuacionlipschitzperturbationversion2} holds. For every $r>0$ set $\varrho(r)$ as in \eqref{varrho}. Then the map $$F = f + g$$ is a global homeomorphism from $X$ onto $E$. Furthermore,  and for every $r>0$, we have that $B_E(F(x^*);\varrho(r))\subset F(B_X(x^*;r))$ and the inverse map $F^{-1}$ is globally ${\omega(r)}$-Lipschitz  on $B_E(F(x^*);\varrho(r))$.
\end{corollary}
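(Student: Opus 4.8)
The plan is to obtain this Corollary as a direct specialization of Theorem \ref{lipschitzperturbation}, taking $Y=E$ and letting $\sigma$ be the addition of the Banach space. First I would observe that $E$, regarded as a $C^1$ Finsler manifold modeled on itself (with the identity as a global chart and the norm $|\cdot|_E$ inducing the Finsler structure on each tangent space), satisfies all the standing hypotheses on the target manifold: it is complete because it is a Banach space, it is simply connected because it is convex and hence contractible, and it has smooth norm by assumption. Moreover its Finsler metric coincides with the norm distance (the straight segment realizes the infimum of lengths), so the balls $B_E(\cdot\,;\cdot)$ and the function $\varrho$ in \eqref{varrho} are exactly the norm balls and the integral appearing in the statement.

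Next I would verify that $\sigma(y,z):=y+z$ meets the two requirements imposed on $\sigma$ in Theorem \ref{lipschitzperturbation}. Symmetry is just commutativity of addition, $y+z=z+y$. For the isometry requirement, for any fixed $z\in E$ and any $y_1,y_2$ we have $d_E(\sigma(y_1,z),\sigma(y_2,z))=|(y_1+z)-(y_2+z)|_E=|y_1-y_2|_E=d_E(y_1,y_2)$, so $\sigma(\cdot,z)$ is in fact a global isometry, and in particular the local isometry condition holds on every neighborhood. Since $f,g$ are locally Lipschitz, $Jf$ is a strong pseudo-Jacobian that is $Jf$-regular at every point, and \eqref{ecuacionlipschitzperturbationversion2} holds by hypothesis, all the assumptions of Theorem \ref{lipschitzperturbation} are in force with $F(x)=\sigma(f(x),g(x))=f(x)+g(x)$.

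Applying Theorem \ref{lipschitzperturbation} then yields at once that $F=f+g$ is a global homeomorphism from $X$ onto $E$, that its inverse is locally Lipschitz with $\lip F^{-1}(y)\leq \omega(d_X(x^*,F^{-1}(y)))$ for every $y\in E$, and that $B_E(F(x^*);\varrho(r))\subset F(B_X(x^*;r))$ for every $r>0$. It then remains only to upgrade the pointwise estimate into the claimed global Lipschitz bound on the ball. Here I would argue that if $y\in B_E(F(x^*);\varrho(r))$ then by the inclusion just obtained $F^{-1}(y)\in B_X(x^*;r)$, so $d_X(x^*,F^{-1}(y))<r$, and since $\omega$ is nondecreasing we get $\lip F^{-1}(y)\leq \omega(r)$ at every point $y$ of this ball.

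The main obstacle is this final passage from a pointwise local Lipschitz rate to a genuine global Lipschitz constant on the ball. For this I would exploit the convexity of $B_E(F(x^*);\varrho(r))$: given $y_1,y_2$ in it, the segment $p(t)=(1-t)y_1+ty_2$ stays inside the ball, and its continuous pullback $q(t)=F^{-1}(p(t))$ is a path in $X$ whose image lies in $B_X(x^*;r)$. Since the local Lipschitz rate of $F^{-1}$ is bounded by $\omega(r)$ at each point of the segment and the segment is compact, the standard estimate for a continuous function with bounded upper right Dini derivative, applied to $\phi(t)=d_X(F^{-1}(y_1),q(t))$, gives $d_X(F^{-1}(y_1),F^{-1}(y_2))=\phi(1)\leq \omega(r)\,|y_1-y_2|_E$, which is precisely the asserted $\omega(r)$-Lipschitz bound on $B_E(F(x^*);\varrho(r))$.
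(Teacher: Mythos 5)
Your proof is correct and follows essentially the same route as the paper: specialize Theorem \ref{lipschitzperturbation} to $\sigma(y,z)=y+z$, use the inclusion in item $(2)$ to get $d_X(x^*,F^{-1}(y))<r$ for $y\in B_E(F(x^*);\varrho(r))$, deduce the pointwise bound $\lip F^{-1}(y)\leq\omega(r)$ from monotonicity of $\omega$, and pass to the global $\omega(r)$-Lipschitz estimate via convexity of the ball. The only difference is that you spell out details the paper leaves implicit (that $E$ qualifies as a complete, simply connected target Finsler manifold, that addition is a symmetric local isometry, and the Dini-derivative argument behind the convexity step), which is an elaboration of the same argument rather than a different approach.
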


\begin{proof}
Set $\sigma(y,z) := y+z$ on $E\times E$, and apply  Theorem \ref{lipschitzperturbation}. Given $r>0$, by inclusion given in item (2) in Theorem \ref{lipschitzperturbation} we see that $B_E(F(x^*);\varrho(r))\subset F(B_X(x^*;r))$. Then for every $y\in B_E(F(x^*);\varrho(r))$ we have that $d_X(x^*, F^{-1}(y)<r$ and from inequality (1) in Theorem \ref{lipschitzperturbation} we obtain that
$$
\lip F^{-1}(y)\leq \omega(d_X(F^{-1}(y),x^*))\leq \omega (r).
$$
That is, $F^{-1}$ is locally $\omega (r)$-Lipschitz on $B_E(F(x^*);\varrho(r))$. From the convexity of the ball, we deduce that, in fact,  $F^{-1}$ is globally ${\omega(r)}$-Lipschitz  on $B_E(F(x^*);\varrho(r))$.

\end{proof}

\begin{corollary}{\bf [Perturbation of the identity]}\label{perturbation}
Let $E$  be a Banach space with smooth norm $|\cdot|$, and let  $g:E\rightarrow E$ be a locally Lipschitz map.  Suppose that there exists a weight $\omega$ such that,  for every $x\in E$:
$$\lip g(x) \leq 1-\omega(|x|)^{-1} < 1.$$
 Then the map $I+g$ is a global homeomorphism of $E$ onto itself, and   for every $r>0$, the inverse map $(I+g)^{-1}$ is globally ${\omega(r)}$-Lipschitz continuous on $B_E(g(0);\varrho(r))$.
\end{corollary}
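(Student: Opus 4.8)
The plan is to obtain this corollary as the special case $f=I$, $X=E$, $x^{\ast}=0$ of the preceding corollary (equivalently, of Theorem~\ref{lipschitzperturbation} with $\sigma(y,z)=y+z$). First I would regard $E$ as a $C^1$ manifold modeled on itself, equipped with the constant Finsler structure $\|\cdot\|_x=|\cdot|$; this makes $E$ a complete Finsler manifold whose Finsler distance is exactly the norm distance, so that $d_E(x,0)=|x|$ for every $x\in E$. The identity map $I\colon E\to E$ is (Fr\'echet, hence G\^ateaux) differentiable with derivative the identity operator $\mathrm{id}_E$ at every point, so, as noted for G\^ateaux-differentiable maps in Section~3, the singleton assignment $Jf(x):=\{\mathrm{id}_E\}$ is a pseudo-Jacobian mapping for $f=I$; since $\{\mathrm{id}_E\}$ is trivially convex and \emph{WOT}-compact, it is in fact a \emph{strong} pseudo-Jacobian mapping.

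Next I would compute the Finsler regularity index of this pseudo-Jacobian. Because $Jf(u)=\{\mathrm{id}_E\}$ for all $u$, the convex hull ${\rm co}\,Jf(B_E(x;R))$ reduces to $\{\mathrm{id}_E\}$, and the Banach constant of the identity operator is $\inf_{|y^\ast|=1}|\mathrm{id}_E^{\ast}y^\ast|=1$; hence $C(Jf(x))=1$ for every $x\in E$. Moreover $\mathrm{id}_E$ is a linear isomorphism, so condition (2) in the definition of $Jf$-regularity holds, and together with $C(Jf(x))=1>0$ this shows that $f=I$ is $Jf$-regular at every point of $E$.

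Taking $x^{\ast}=0$, so that $d_E(x,0)=|x|$, the hypothesis $\lip g(x)\le 1-\omega(|x|)^{-1}$ rearranges to
\[
C(Jf(x))-\lip g(x)\;=\;1-\lip g(x)\;\ge\;\frac{1}{\omega(|x|)}\;=\;\frac{1}{\omega(d_E(x,0))},
\]
which is precisely inequality~\eqref{ecuacionlipschitzperturbationversion2}. With all the hypotheses of the preceding corollary verified, I would invoke it to conclude that $F:=I+g$ is a global homeomorphism of $E$ onto itself, that $B_E(F(0);\varrho(r))\subset F(B_E(0;r))$ for every $r>0$, and that $F^{-1}$ is globally $\omega(r)$-Lipschitz on $B_E(F(0);\varrho(r))$, where $\varrho$ is as in \eqref{varrho}. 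Finally, since $F(0)=I(0)+g(0)=g(0)$, the distinguished center $F(0)$ equals $g(0)$, so $B_E(F(0);\varrho(r))=B_E(g(0);\varrho(r))$ and the statement follows.

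This argument is essentially a verification rather than a construction, so there is no serious obstacle; the only points requiring a little care are the identification of the Finsler distance on $E$ with the norm distance (so that the choice $x^{\ast}=0$ yields $d_E(x,0)=|x|$) and the elementary but decisive computation $C(Jf(x))\equiv 1$ for the identity, which is exactly what converts the normalized perturbation bound $\lip g(x)\le 1-\omega(|x|)^{-1}$ into the weighted strong-submersion inequality~\eqref{ecuacionlipschitzperturbationversion2}.
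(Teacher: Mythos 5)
Your proof is correct and follows exactly the route the paper intends: Corollary \ref{perturbation} is stated as an immediate specialization of the preceding corollary (Theorem \ref{lipschitzperturbation} with $\sigma(y,z)=y+z$), taking $X=E$ with its norm-induced Finsler structure, $f=I$ with strong pseudo-Jacobian $Jf(x)=\{\mathrm{id}_E\}$ so that $C(Jf(x))\equiv 1$, and $x^{\ast}=0$ so that $F(0)=g(0)$. Your verification of $Jf$-regularity and the rearrangement of the hypothesis into inequality \eqref{ecuacionlipschitzperturbationversion2} are exactly the checks needed, so nothing is missing.
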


Note that if $\omega$ in the Corollary \ref{perturbation} is constant then we have the classical theorem of perturbation of the identity.

\end{document}